\newcommand{\Z}{{\mathbb Z}} \newcommand{\Q}{{\mathbb Q}}
 \newcommand{\F}{{\mathbb F}}
\newcommand{\Hom}{\operatorname{Hom}\nolimits}
\newcommand{\Aut}{\operatorname{Aut}\nolimits}
\newcommand{\End}{\operatorname{End}\nolimits}
\newcommand{\tot}{\operatorname{Tot}\nolimits}
\newcommand{\im}{\operatorname{Im}\nolimits}
\newcommand{\rk}{\operatorname{rk}\nolimits}
\renewcommand{\dim}{\operatorname{rk}\nolimits}
\newcommand{\GL}{\operatorname{GL}\nolimits}
\newcommand{\sgn}{\operatorname{sgn}\nolimits}
\newcommand{\rad}{\operatorname{Rad}\nolimits}
\newcommand{\red}{\operatorname{red}\nolimits}
\newcommand{\A}{\ifmmode{\mathcal{A}}\else${\mathcal{A}}$\fi}
\newcommand{\B}{\ifmmode{\mathcal{B}}\else${\mathcal{B}}$\fi}
\newcommand{\C}{\ifmmode{\mathcal{C}}\else${\mathcal{C}}$\fi}
\newcommand{\D}{\ifmmode{\mathcal{D}}\else${\mathcal{D}}$\fi}
\newcommand{\G}{\ifmmode{\mathcal{G}}\else${\mathcal{G}}$\fi}
\newcommand{\I}{\ifmmode{\mathcal{I}}\else${\mathcal{I}}$\fi}
\newcommand{\J}{\ifmmode{\mathcal{J}}\else${\mathcal{J}}$\fi}
\newcommand{\K}{\ifmmode{\mathcal{K}}\else${\mathcal{K}}$\fi}
\renewcommand{\O}{\ifmmode{\mathcal{O}}\else${\mathcal{O}}$\fi}
\renewcommand{\P}{\ifmmode{\mathcal{P}}\else${\mathcal{P}}$\fi}
\newcommand{\U}{\ifmmode{\mathcal{U}}\else${\mathcal{U}}$\fi}
\newcommand{\M}{\ifmmode{\mathcal{M}}\else${\mathcal{M}}$\fi}
\newcommand{\N}{\ifmmode{\mathcal{N}}\else${\mathcal{N}}$\fi}
\newcommand{\Ss}{\ifmmode{\mathcal{S}}\else${\mathcal{S}}$\fi}
\newcommand{\T}{\ifmmode{\mathcal{T}}\else${\mathcal{T}}$\fi}
\newcommand{\Ff}{\ifmmode{\mathcal{F}}\else${\mathcal{F}}$\fi}
\newcommand{\Ll}{\ifmmode{\mathcal{L}}\else${\mathcal{L}}$\fi}
\newtheorem{Thm}{Theorem}[section]
\newtheorem{Conj}{Conjecture}[section]
\newtheorem{Prop}[Thm]{Proposition}
\newtheorem{Cor}[Thm]{Corollary}
\newtheorem{Lem}[Thm]{Lemma}
\theoremstyle{definition}
\newtheorem{Defi}[Thm]{Definition}
\newtheorem{Rmk}[Thm]{Remark}
\theoremstyle{remark}
\theoremstyle{plain}
\title{Cohomology of uniserial $p$-adic space groups}
\author{Antonio D\'iaz Ramos}
\address{Departamento de \'Algebra, Geometr\'ia y Topolog\'ia,
Universidad de M\'alaga, Apdo correos 59, 29080 M\'alaga, Spain}
\email{adiazramos@uma.es}
\author{Oihana Garaialde Oca\~{n}a}
\address{Matematika Saila,
Euskal Herriko Unibertsitatearen Zientzia eta Teknologia Fakultatea,
 posta-kutxa 644, 48080 Bilbo, Spain
}
\email{oihana.garayalde@ehu.es}
\author{Jon Gonz\'alez-S\'anchez}
\address{Departamento de Matemáticas, 
Facultad de Ciencia y Tecnología de la Universidad del País Vasco,
 Apdo correos 644, 48080 Bilbao, Spain}
\email{jon.gonzalez@ehu.es}
\date{\today}
\begin{document}

\maketitle

\begin{abstract}
A decade ago, J.F. Carlson proved that there are finitely many cohomology rings of finite $2$-groups of fixed coclass, and he conjectured that this result ought to be true for odd primes \cite{Carlson}. In this paper, we prove the non-twisted case of Carlson's conjecture for any prime and we show how to proceed in the twisted case.
\end{abstract}

\section{Introduction}
In \cite{Carlson} Carlson proved the astonishing result that there are finitely many isomorphism types of cohomology rings with coefficients in $\F_2$ of $2$-groups of a fixed coclass. He also conjectured that an analogous result should hold for odd primes $p$. In this paper, we study this conjecture of Carlson, we solve it in the non-twisted case and we reduce the twisted case to a controllable situation. In order to present the results, we start recalling Leedham-Green's coclass classification of $p$-groups. A $p$-group $G$ of size $p^n$ and nilpotency class $m$ has coclass $c=n-m$. The main result in \cite{LeedGreen} states that there exist an integer $f(p,c)$ and a normal subgroup $N$ of $G$ with $|N|\leq f(p,c)$ such that $G/N$ is \emph{constructible}: we recall the definition of a constructible group in Subsection \ref{subsection:uniserialpadicspacegroupsandconstructible} below. These constructible groups arise from uniserial $p$-adic space groups and come into two flavors, either \emph{twisted} or \emph{non-twisted}, according to whether the defining twisting homomorphism is non-zero or zero respectively, see Remark \ref{rmk:nontwistedcase}. We say that $G$ is \emph{non-twisted} if for some normal subgroup $N$ of bounded size as above, the quotient $G/N$ is constructible non-twisted. Otherwise, we say that $G$ is \emph{twisted}. In the former case, $G/N$ has a large abelian normal subgroup, and in the latter case, $G/N$ has a large normal subgroup of nilpotency class $2$. 

\begin{Thm}\label{thm:intronontwist}
Let $p$ be any prime. Then there are finitely many isomorphism types of cohomology rings with coefficients in $\F_p$ of non-twisted $p$-groups of a fixed coclass.
\end{Thm}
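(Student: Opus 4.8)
The plan is to reduce to the constructible non-twisted groups and then to bound, uniformly over the whole coclass family, the degrees in which their mod-$p$ cohomology rings are generated and related; once such a bound is available, finiteness reduces to controlling a bounded amount of linear data. By the Leedham--Green reduction recalled above, every non-twisted $p$-group $G$ of coclass $c$ sits in an extension $1\to N\to G\to G/N\to 1$ with $|N|\le f(p,c)$ and $G/N$ constructible non-twisted, so I would first treat the constructible non-twisted groups and only afterwards return to the bounded extension by $N$. Each constructible non-twisted group is a quotient $G_m=E/L_m$ of a fixed uniserial $p$-adic space group $1\to L\to E\to P\to 1$, where $L\cong\Z_p^{\,d}$ is the translation lattice, $P$ is the fixed finite point group, and the $L_m$ form a descending chain of $P$-invariant sublattices with each $A_m:=L/L_m$ a \emph{finite abelian} $p$-group. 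This abelianness is exactly the non-twisted hypothesis, and it is the structural feature the argument exploits.

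The first main step is a uniform bound on generator and relation degrees. Combining the Evens norms of the (degree $\le 2$) polynomial generators of $H^*(A_m;\F_p)$ with classes inflated from the fixed group $P$, one obtains a homogeneous system of parameters for $H^*(G_m;\F_p)$ in degrees bounded by a constant depending only on $p$, $d$ and $|P|$, hence independent of $m$. Feeding these bounded parameter degrees into Symonds' theorem that the Castelnuovo--Mumford regularity of the cohomology ring of a finite group vanishes, I obtain a constant $D=D(p,c)$ such that every $H^*(G_m;\F_p)$ is generated in degrees $\le D$ and has its relations in degrees $\le D$. In particular each ring $H^*(G_m;\F_p)$ is determined, up to graded isomorphism, by its truncation in degrees $\le D$.

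The second main step bounds that truncation. I would compute it through the Lyndon--Hochschild--Serre spectral sequence of $1\to A_m\to G_m\to P\to 1$, with $E_2$-page $H^*(P;H^*(A_m;\F_p))$. Since $A_m/pA_m=L/(pL+L_m)$ and the higher cohomology of $A_m$ is generated from degrees $1$ and $2$ by Bockstein and Steenrod operations, the $P$-module $H^t(A_m;\F_p)$ is, in each fixed degree $t$, independent of $m$ for $m$ large. Hence every bidegree of the $E_2$-page, and \emph{a fortiori} of $E_\infty$, is an $\F_p$-vector space whose dimension is bounded independently of $m$, giving a uniform bound on the $\F_p$-dimensions of $H^i(G_m;\F_p)$ for all $i\le D$. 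Over the finite field $\F_p$ there are only finitely many graded-commutative algebra structures supported in degrees $\le D$ of total dimension below a fixed bound; so only finitely many truncations occur, and with the previous step only finitely many rings $H^*(G_m;\F_p)$.

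It remains to pass from $G/N$ back to $G$. As $|N|\le f(p,c)$, the ring $H^*(N;\F_p)$ ranges over a finite set, but it enters the spectral sequence of $1\to N\to G\to G/N\to 1$ as a coefficient system over the still-varying group $G/N$; to finish I would extend the degree and dimension bounds above to the cohomology of the constructible groups with coefficients in these bounded modules, so that $H^*(G;\F_p)$ is again pinned down by finitely many data. I expect the principal obstacle to be the first step: producing a bounded-degree system of parameters and turning Symonds' regularity bound into a genuinely \emph{uniform} constant $D$ across the infinite family, for it is this uniformity that licenses the passage from degreewise finiteness to finiteness of the rings themselves; the twisted-coefficient refinement required to absorb $N$ is the secondary difficulty.
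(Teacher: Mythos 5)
Your route is genuinely different from the paper's: you aim for a \emph{uniform} bound $D$ on generator and relation degrees (bounded-degree parameters plus Symonds' regularity theorem) and then count bounded truncations, whereas the paper never bounds generation degrees at all. Instead it constructs explicit $P$-invariant cochain-level zig-zags through a universal complex $U(p,d)$ (built from Pr\"ufer groups and Taelman's exterior-algebra map, which is where the hypotheses $d<p$ and the integral lifting enter), concludes via Lemmas \ref{lemma:homalg} and \ref{lemma:homalgalgebra} that the bigraded algebras associated to the LHS filtration agree across the whole family (Propositions \ref{prop:commonintegralliftingsamecohomology}, \ref{prop:commonintegralliftingsamecohomologyunbounded}, \ref{prop:spacegroupssamebigradedalgebras}), and then invokes Carlson-type counting: Theorem \ref{Car2} (a filtration's bigraded algebra determines the ring up to finitely many possibilities), Theorem \ref{Car1} (bounded-index subgroups), and Theorem \ref{thm:cohringsfromquotient} (bounded kernels, where Weigel's theorem on powerful $p$-central groups with the $\Omega$EP stops the spectral sequence at a bounded page). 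However, as written your argument has genuine gaps, the most serious at its first step.

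Concretely: the top Evens norms of the polynomial generators of $H^*(A_m;\F_p)$, together with inflations from $P$, need \emph{not} form a homogeneous system of parameters. The restriction of the norm of $y_i$ to an elementary abelian $E'\leq A_m$ is the product $\prod_{g\in P}\mathrm{res}_{E'}(g^*y_i)$, and a single vanishing conjugate factor kills the whole product. Already in $C_p\wr C_p$ with base $A=C_p^{\,p}$, the norm of $y$ restricts to $y_1\cdots y_p$, which vanishes on $E'=\langle(a,0,\dots,0)\rangle$, while every class inflated from $P$ also restricts trivially to $E'\leq A$; so the joint zero locus is nontrivial and, by Quillen's criterion, these classes are not parameters. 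The standard repair is to take \emph{all} homogeneous components of the total norm of $1+y_i$ (Evens' Chern-class trick: if some conjugate of $y_i$ is nonzero on $E'$, some elementary symmetric function of the conjugates is nonzero there), add inflated parameters of the finitely many possible point groups $P$ to detect $E\not\leq A_m$, and finish with a Noether normalization over $\F_p$; the degrees remain bounded, so your step survives, but the claim as stated is false. Two smaller points: the $P$-module $H^t(A_m;\F_p)$ is not independent of $m$ but only periodic (since $L_{m+d}=pL_m$) -- harmless, as you only need dimension bounds -- and ``determined by the truncation'' requires the relation-degree bound as well as the generation bound, which Symonds' regularity result does supply.

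The second gap is the final reduction absorbing $N$, which you leave as an expectation: inflating parameters of $H^*(G/N;\F_p)$ to $G$ does not produce parameters (they die on elementary abelians meeting $N$), and there is no general theorem giving h.s.o.p.\ degrees bounded in terms of the rank alone for an infinite family of $p$-groups, so the uniformity must be re-derived for $G$ itself. This can be done in the non-twisted case: the preimage $H\unlhd G$ of the abelian subgroup $T/U$ of $G/N$ satisfies $[H,H]\leq N$, hence $C_H([H,H])$ has bounded index and class at most $2$ with commutator subgroup of order at most $f(p,c)$, and a bounded power of it is a normal \emph{abelian} subgroup of $G$ of bounded index, to which your Steps 1--2 apply directly. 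Without some such device -- the paper's substitute is Theorem \ref{thm:cohringsfromquotient}, whose proof uses Lemma \ref{lemma:CpGQ} and Weigel's $\Omega$EP theorem to stop the spectral sequence of $1\to N\to G\to G/N\to 1$ at page $2|G:B|+1$ -- the proof is incomplete.
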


For $p=2$, there are no twists in Leedham-Green's classification and hence every $2$-group is non-twisted. In particular, Theorem \ref{thm:intronontwist} includes Carlson's aforementioned result on $2$-groups. We reach Theorem \ref{thm:intronontwist}  in several steps starting from  abelian $p$-groups. Next, we give  a coarse description of the main milestones along this route. 

The first step is to consider $K$ and $K'$ abelian $p$-groups of the same rank $d$. Then their cohomology rings with coefficients in $\F_p$ are abstractly isomorphic regardless of the exponents (but for $p=2$ and exponent $1$). For small rank we can realize this isomorphism at the level of cochain complexes.

\begin{Prop}\label{prop:introabeliansamecohomology}
The ring isomorphism $H^*(K;\F_p)\cong H^*(K';\F_p)$ can be realized by a zig-zag of quasi-isomorphisms in the category of cochain complexes when $d<p$.
\end{Prop}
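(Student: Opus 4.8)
Since $H^*(K;\F_p)\cong H^*(K';\F_p)$ is already available as a ring isomorphism (both rings being $\Lambda(x_1,\dots,x_d)\otimes\F_p[y_1,\dots,y_d]$ for $p$ odd, independently of the exponents), my plan is to produce, for every abelian $p$-group $K$ of rank $d<p$, an explicit zig-zag of quasi-isomorphisms of cochain complexes between $C^*(K;\F_p)$ and its cohomology $H^*(K;\F_p)$ that carries the cup product to the standard product, and then to splice two such zig-zags with the ring isomorphism placed in the middle. The first reduction is a multiplicative K\"unneth step: writing $K\cong\prod_{i=1}^d\Z/p^{a_i}$, the Eilenberg--Zilber shuffle map gives a quasi-isomorphism $C^*(K;\F_p)\simeq\bigotimes_{i=1}^d C^*(\Z/p^{a_i};\F_p)$ compatible with products, while on cohomology $H^*(K;\F_p)=\bigotimes_{i=1}^d H^*(\Z/p^{a_i};\F_p)$. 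It therefore suffices to treat a single cyclic factor and then to tensor the resulting equivalences together.

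For a cyclic factor I would use the periodic minimal resolution of $\F_p$ over $\F_p[\Z/p^{a}]$: after applying $\Hom_{\F_p[\Z/p^{a}]}(-,\F_p)$ all differentials vanish, so the resulting cochain complex is already its own cohomology $\Lambda(x)\otimes\F_p[y]$, \emph{with the same underlying complex for every exponent $a$}. The content lies entirely in the multiplication, read off from a diagonal approximation on the resolution; one checks that the induced ring is $\Lambda(x)\otimes\F_p[y]$ independently of $a$. Choosing cocycle representatives $u,v\in C^*(\Z/p^a;\F_p)$ of the two generators, the only relation to impose at the cochain level is $u\cup u=0$; since $u$ is odd, $u\cup u+u\cup u=d(u\cup_1 u)$, so $u\cup u$ is a coboundary and, as $2$ is invertible for $p$ odd, it can be normalized away, yielding an explicit multiplicative quasi-isomorphism realizing each cyclic factor.

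The crux, and the only place where the hypothesis $d<p$ is genuinely needed, is assembling the $d$ cyclic equivalences into one that realizes the \emph{graded-commutative} product of $\bigotimes_i H^*(\Z/p^{a_i})$ on the nose. Over $\F_p$ the cup product is only homotopy-commutative --- for cocycles $a,b$ one has $a\cup b-(-1)^{|a||b|}b\cup a=d(a\cup_1 b\pm b\cup_1 a)$, a coboundary but not zero --- and the coherence of these homotopies under permuting $n$ factors is controlled by the mod-$p$ homology of the symmetric group $\Sigma_n$. To pass from the homotopy-commutative product to the strict one I would symmetrize the tensor product of the cyclic models over the groups $\Sigma_n$ permuting factors, for $n\le d$; this requires inverting the orders $n!=|\Sigma_n|$, which is possible exactly when $d<p$, equivalently when $\F_p[\Sigma_d]$ is semisimple. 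This is precisely the step I expect to be hardest: it is the classical failure of commutative cochains in characteristic $p$, and $d<p$ is exactly the range in which the symmetric-group obstructions vanish and the averaging is legitimate. Finally, when $p=2$ the condition $d<p$ forces $d=1$, so only a single cyclic group occurs and the statement (including the exponent-one caveat) is checked directly.
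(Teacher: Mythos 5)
Your proposal sets itself the goal of a \emph{strictly multiplicative} (indeed strictly commutative) zig-zag, and that self-imposed strengthening is exactly where it breaks down. The proposition, as the paper proves it (Lemma \ref{lem:chainmapfromuniversal} and Corollary \ref{cor:abeliansamecohomology}), only asks for quasi-isomorphisms of cochain complexes whose \emph{induced maps on cohomology} are ring isomorphisms; the cochain maps themselves are not required to respect products, and the paper says so explicitly when it reuses this setup later (``we do not assume that the morphisms in the zig-zag commute with the products''). Your third paragraph --- averaging the tensor product of the cyclic models over $\Sigma_n$ for $n\le d$ to strictify graded commutativity --- is therefore attacking a harder problem than the one posed, and as written it is not a proof: ``symmetrize the tensor product of the cyclic models over $\Sigma_n$'' does not specify a cochain map or any construction; the comparison between the bar complex of $K$ and the tensor product of minimal models is never made multiplicative; and the $\cup_1$-trick in your second paragraph kills $u\cup u$ but says nothing about strict commutativity $u\cup v=v\cup u$ at the cochain level, nor about the coherence of higher monomials. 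You correctly identify this as the classical failure of commutative cochains in characteristic $p$, but you supply no mechanism by which $d<p$ resolves it. That is a genuine gap --- though a gap in a step you never needed.

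Once the target is read correctly, your first two paragraphs essentially suffice, and they constitute a route genuinely different from the paper's. Resolution comparison between the periodic minimal resolution and the bar resolution of $\Z/p^a$ gives a quasi-isomorphism $C^*(\Z/p^a;\F_p)\to C^*_{\min}$ inducing a ring isomorphism on cohomology; for $p$ odd (and for $p=2$ with exponent at least $2$, since $u\cup u=\binom{p^a}{2}v\equiv 0$) the minimal model is the ring $\Lambda(u)\otimes\F_p[v]$ with zero differential \emph{independently of the exponent}, so the tensor product of these models is a common middle term, and tensoring the factorwise comparisons with the K\"unneth quasi-isomorphism (on cochains the multiplicative comparison is the dual Alexander--Whitney map rather than the shuffle map, but on cohomology both induce the cross product) yields a zig-zag realizing the ring isomorphism. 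Notice that this argument nowhere uses $d<p$ --- which should have been a warning sign about your diagnosis of the hypothesis. In the paper, $d<p$ enters through the factor $\frac{1}{t!}$ in Taelman's antisymmetrized map $\varphi_o\colon \Lambda(y_1,\dots,y_d)\to C^*(K;\F_p)$, sending $y_{l_1}\cdots y_{l_t}$ to $\frac{1}{t!}\sum_{\sigma\in\Sigma_t}\sgn(\sigma)\,Y_{l_{\sigma(1)}}\cup\cdots\cup Y_{l_{\sigma(t)}}$, and the point of that antisymmetrization is $P$-equivariance for the later semidirect-product results; the paper even remarks that the naive non-invariant map $y_{l_1}\cdots y_{l_t}\mapsto Y_{l_1}\cup\cdots\cup Y_{l_t}$ suffices for the present statement. (Your instinct to invert $n!$ for $n\le d$ is thus morally the same move as the paper's, but deployed for the wrong purpose.) Structurally, the paper also avoids minimal resolutions altogether: its middle object is the single universal complex $U(p,d)=C^*(C^d_{p^\infty};\F_p)\otimes\Lambda(y_1,\dots,y_d)$, with the polynomial part handled by the Pr\"ufer inclusion $K\hookrightarrow C^d_{p^\infty}$, an isomorphism on reduced cohomology. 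That choice buys a middle term carrying a $P$-action compatible with integral liftings, which is what the subsequent propositions need and which your exponent-matched minimal-model middle would have to be re-engineered to support.
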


In the next step, we let an arbitrary $p$-group $P$ act on these abelian $p$-groups, and hence we must be careful enough to make the previous quasi-isomorphisms $P$-invariant. When the action may be lifted to integral matrices (Definition \ref{defi:integrallifting}) we still have control on the number of isomorphism types for semidirect products.

\begin{Prop}\label{prop:introcommonintegralliftingsamecohomology}
Let $p$ be a prime and let $\{G_i=K_i\rtimes P\}_{i\in I}$ be a family of groups such that $K_i$ is abelian of fixed rank $d<p$ for all $i$ and that all actions of $P$ have a common integral lifting. Then there are finitely many isomorphism types of rings in the collection of cohomology rings $\{H^*(G_i;\F_p)\}_{i\in I}$. 
\end{Prop}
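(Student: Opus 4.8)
The plan is to compute each $H^*(G_i;\F_p)$ from the mod $p$ cochains of the fibre $BK_i$ together with the $P$-action, and to transport the cochain-level comparison of Proposition \ref{prop:introabeliansamecohomology} through the functor ``cochains of $P$ with coefficients in a $P$-equivariant complex''. First I would fix the model. Since $G_i=K_i\rtimes P$ is a split extension, its classifying space is the Borel construction $EP\times_P BK_i$, with $P$ acting on $BK_i$ through its action on $K_i$. At the level of mod $p$ cochains this yields a natural ring quasi-isomorphism
\[ C^*(G_i;\F_p)\;\simeq\;C^*\bigl(P;\,C^*(K_i;\F_p)\bigr), \]
the right-hand side being the total complex of the cochains of $P$ with coefficients in the $P$-equivariant differential graded algebra $C^*(K_i;\F_p)$ (functoriality of $C^*(-;\F_p)$ turns the $P$-action on $K_i$ into an action by algebra automorphisms). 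The associated hypercohomology spectral sequence is the Lyndon--Hochschild--Serre spectral sequence $E_2^{s,t}=H^s(P;H^t(K_i;\F_p))\Rightarrow H^{s+t}(G_i;\F_p)$.

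Second, I would upgrade Proposition \ref{prop:introabeliansamecohomology} to a $P$-equivariant statement. The common integral lifting of Definition \ref{defi:integrallifting} supplies a single homomorphism $P\to\GL_d(\Z)$ through which $P$ acts on every $K_i$; in particular the induced actions on $H^1(K_i;\F_p)$, on $H^2(K_i;\F_p)$, and hence on the whole $E_2$-page, are given by the same matrices reduced mod $p$ and are thus independent of $i$. The real point is to realize this compatibility at the cochain level: I would argue that the zig-zag of ring quasi-isomorphisms
\[ C^*(K_i;\F_p)\xleftarrow{\ \sim\ }\cdots\xrightarrow{\ \sim\ }C^*(K_j;\F_p) \]
provided by Proposition \ref{prop:introabeliansamecohomology} can be chosen \emph{naturally in the integral data}, so that each $g\in P$ (acting through the common lifting) induces a compatible family of self-maps on every term of the zig-zag while the structure maps remain $P$-equivariant. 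This forces one to check that the intermediate differential graded algebras are built functorially from the common integral model $\Z^d$, i.e. that the whole construction is natural for morphisms induced by integral matrices; this is exactly where the hypotheses $d<p$ and ``common integral lifting'' are used.

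Third, I would apply the functor $C^*(P;-)$ to the equivariant zig-zag. Because $C^*(P;-)$ sends a quasi-isomorphism of bounded-below $P$-equivariant complexes to a quasi-isomorphism --- by naturality of the hypercohomology spectral sequence $H^s(P;H^t(A^\bullet))\Rightarrow H^{s+t}(C^*(P;A^\bullet))$ together with the comparison theorem --- and preserves cup products, Step one then produces a zig-zag of ring quasi-isomorphisms
\[ C^*(G_i;\F_p)\xleftarrow{\ \sim\ }\cdots\xrightarrow{\ \sim\ }C^*(G_j;\F_p), \]
and hence a ring isomorphism $H^*(G_i;\F_p)\cong H^*(G_j;\F_p)$ whenever the fibres $C^*(K_i;\F_p)$ and $C^*(K_j;\F_p)$ lie in the same cochain-level class. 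Since among abelian $p$-groups of the fixed rank $d<p$ there are only finitely many such classes --- a single class for $p$ odd, with the exceptional exponent-one group contributing at most one extra class when $p=2$ --- the equivariant zig-zag connects any two $G_i$ whose fibres share a class, and the collection $\{H^*(G_i;\F_p)\}_{i\in I}$ therefore falls into finitely many isomorphism types.

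The main obstacle is the second step. Proposition \ref{prop:introabeliansamecohomology} only asserts the \emph{existence} of a cochain-level zig-zag for each fixed pair of abelian groups, and the quasi-isomorphisms there are a priori not canonical; making them simultaneously compatible with the $P$-action --- natural with respect to the common integral lifting and with $P$-equivariant intermediate terms --- is the delicate part and is precisely what the rank bound $d<p$ must guarantee. Everything else (the Borel-construction model and the fact that $C^*(P;-)$ preserves ring quasi-isomorphisms) is formal once the equivariant comparison is in hand.
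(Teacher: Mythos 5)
There is a genuine gap in your third step, and it is precisely the point the paper is careful about. The zig-zag supplied by Proposition \ref{prop:introabeliansamecohomology} does \emph{not} consist of ring quasi-isomorphisms: the comparison map $U(p,d)\to C^*(K;\F_p)$ is the composite of $\varphi_e\otimes\varphi_o$ with the cochain-level cup product, and neither the antisymmetrized map $\varphi_o$ nor this composite is multiplicative at the cochain level (the standard cochain algebra $C^*(K;\F_p)$ is not graded-commutative, so the antisymmetrization becomes a ring map only after passing to cohomology). Consequently, applying $\Hom_{\F_pP}(B_*(P;\F_p),-)$ and totalizing yields quasi-isomorphisms (Lemma \ref{lemma:homalg}) but not multiplicative ones, and you cannot conclude a ring isomorphism $H^*(G_i;\F_p)\cong H^*(G_j;\F_p)$. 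The paper explicitly disclaims exactly this conclusion in the Remark following Lemma \ref{lemma:homalgalgebra}, and a single isomorphism type for these families is stated in the Introduction only as an expectation, not a theorem. What the non-multiplicative zig-zag does give, via the multiplicative spectral sequences of the column filtrations (Lemma \ref{lemma:homalgalgebra}), is an isomorphism of the bigraded algebras associated to filtrations of $H^*(G_i;\F_p)$ and $H^*(G_j;\F_p)$; the ring structure is then recovered only up to finitely many possibilities by Carlson's counting theorem (Theorem \ref{Car2}). Your proposal omits this final step entirely because its stronger intermediate claim makes it unnecessary --- but that claim is unsupported, and producing a genuinely multiplicative $P$-equivariant zig-zag is a difficulty of the same nature as Conjecture \ref{conj:detailed} in the twisted setting.

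Your first two steps do match the paper's architecture: your model $C^*(P;C^*(K_i;\F_p))$ is the paper's double complex $\Hom_{\F_pP}(B_*(P;\F_p),C^*(K_i;\F_p))$ from Subsection \ref{subsection:resolution-semidirect-product}, and the $P$-equivariant upgrade of the zig-zag through a universal complex built from the common integral lifting is precisely Lemma \ref{lem:chainmapfromuniversalinvariant} --- though you flag this as an unverified obstacle rather than carrying it out. The paper resolves it concretely by letting $P$ act through the lifting on $C^*(C^d_{p^{\infty}};\F_p)$ and on $\Lambda(y_1,\ldots,y_d)$, the $P$-invariance of $\varphi_o$ being exactly where $d<p$ and the specific antisymmetrized formula are needed. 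So the overall skeleton is right, but as written your proof both leaves its acknowledged key step unproven and, more seriously, rests on a multiplicativity assertion that fails for the maps actually available; this is why the filtration/bigraded-algebra argument together with Theorem \ref{Car2} cannot be bypassed.
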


We also show that all these rings are isomorphic as graded $\F_p$-modules. The family of maximal nilpotency class $p$-groups $\{C_{p^i}\times \ldots\times C_{p^i}\rtimes C_p\}_{\i\geq 1}$ forms a family to which the proposition applies with $d=p-1$. We surpass the bounded rank restriction by considering $n$-fold direct products.

\begin{Prop}\label{prop:introcommonintegralliftingsamecohomologyunbounded}
Let $p$ be a prime and let $\{G_i=L_i\rtimes Q\}_{i\in I}$ be a family of groups such that $L_i$ is an $n$-fold direct product, $L_i=K_i\times \ldots\times K_i$, $K_i$ is abelian of fixed rank $d<p$, $Q\leq P\wr S$ with $S\leq \Sigma_n$ and all actions of $P$ on $K_i$ have a common integral lifting. Then there are finitely many isomorphism types of rings in the collection of cohomology rings $\{H^*(G_i;\F_p)\}_{i\in I}$. 
\end{Prop}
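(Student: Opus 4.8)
The plan is to reduce to Proposition~\ref{prop:introcommonintegralliftingsamecohomology} by realizing each $G_i$ as an extension with kernel the fixed-shape product $L_i$ and with quotient a subgroup of the \emph{fixed finite} group $P\wr S$, and then by carrying the $P$-equivariant cochain-level comparison for the factors $K_i$ through an $n$-fold tensor power. First I would record the group-theoretic shape: since $L_i=K_i\times\cdots\times K_i$ and $Q\le P\wr S=P^n\rtimes S$, the group $G_i=L_i\rtimes Q$ sits inside $\widetilde{G}_i:=L_i\rtimes(P\wr S)\cong(K_i\rtimes P)\wr S$, and the extension $1\to L_i\to G_i\to Q\to 1$ has quotient $Q$. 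The essential finiteness input at this stage is that $P\wr S$ is a single finite group, so $Q$ ranges over only finitely many subgroups as $i$ varies over $I$.

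Next I would set up the cochain-level comparison. By Eilenberg--Zilber there is a quasi-isomorphism $C^*(L_i;\F_p)\simeq C^*(K_i;\F_p)^{\otimes n}$ that is natural enough to respect the factorwise $P^n$-action and the $S$-action permuting the tensor factors. By the method behind Proposition~\ref{prop:introcommonintegralliftingsamecohomology} (which upgrades the cochain-level zig-zag of Proposition~\ref{prop:introabeliansamecohomology} to a $P$-invariant one), the common integral lifting of Definition~\ref{defi:integrallifting} produces, for each $i$, a $P$-equivariant zig-zag of quasi-isomorphisms connecting $C^*(K_i;\F_p)$ to a small model $M_i$ of $\F_p[P]$-modules (respecting the multiplicative structure), and the $M_i$ fall into finitely many $\F_p[P]$-equivariant isomorphism classes, since the reduced lifting mod $p$ is a single fixed $\F_p[P]$-module. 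Applying $(-)^{\otimes n}$ to this zig-zag yields a $(P\wr S)$-equivariant zig-zag $C^*(L_i;\F_p)\simeq M_i^{\otimes n}$, whose $(P\wr S)$-equivariant isomorphism type takes only finitely many values; restricting the action along $Q\le P\wr S$ gives a $Q$-equivariant comparison.

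Finally I would recover the cohomology \emph{ring} of $G_i$ from this coefficient datum. The idea is to represent $C^*(G_i;\F_p)$, up to quasi-isomorphism, as the equivariant cochains $C^*(Q;C^*(L_i;\F_p))$ with their cup-product structure, i.e.\ the totalization of the Lyndon--Hochschild--Serre double complex for $L_i\trianglelefteq G_i$. A $Q$-equivariant multiplicative zig-zag on the coefficients then induces a ring isomorphism on $H^*(Q;-)$; hence whenever $Q$ is the same subgroup of $P\wr S$ and $M_i^{\otimes n}\cong M_{i'}^{\otimes n}$ as $Q$-equivariant objects, one obtains $H^*(G_i;\F_p)\cong H^*(G_{i'};\F_p)$ as rings. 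Since $Q$ ranges over finitely many subgroups and $M_i^{\otimes n}$ over finitely many $(P\wr S)$-equivariant types, only finitely many ring isomorphism types occur.

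The step I expect to be the main obstacle is making the comparison simultaneously $P^n$- and $S$-equivariant at the cochain level. The Eilenberg--Zilber and Alexander--Whitney maps implementing Künneth are not symmetric on the nose, so the $S$-action on $C^*(K_i;\F_p)^{\otimes n}$ permuting factors agrees with the geometric action on $C^*(L_i;\F_p)$ only up to coherent homotopy; one must either choose a symmetric model or rectify the zig-zag to a strictly $S$-equivariant one, and this is precisely where the hypothesis $d<p$ (ruling out the characteristic-$p$ obstructions to formality and to the vanishing of the relevant higher operations) should enter. A secondary point is that the tensor-power and restriction operations must preserve the multiplicative structure, since only a multiplicative zig-zag controls the ring structure of the abutment rather than merely its additive structure as a graded $\F_p$-module.
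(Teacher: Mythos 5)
There are two genuine gaps. First, you correctly isolate strict $S$-equivariance of the K\"unneth comparison as the crux, but you leave it unresolved, and your proposed fixes point the wrong way: the hypothesis $d<p$ has nothing to do with rectifying Eilenberg--Zilber or with formality obstructions; it is needed only to define the antisymmetrized cochain map $\varphi_o$ of \eqref{eq:universalmapodd}, where one divides by $t!$ for $t\leq d$. The paper's proof of Proposition \ref{prop:commonintegralliftingsamecohomologyunbounded} sidesteps Eilenberg--Zilber entirely: it never compares with the standard cochains $C^*(L_i;\F_p)$ of the bar resolution of $L_i$, but instead works with $C^*_\times(L_i;\F_p)=\Hom_{\F_pL_i}(B_*(K_i;\F_p)\otimes\cdots\otimes B_*(K_i;\F_p),\F_p)$; the $n$-fold tensor product of bar resolutions is itself a free $\F_pL_i$-resolution of $\F_p$ (Subsection \ref{subsection:resolution-semidirect-product}), and it carries a \emph{strict} action of $Q\leq P\wr S$ once the permutation of tensor factors is equipped with the wreath-product signs of \cite[p.~49]{LEvens91}. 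The universal map $\phi\colon U(p,d,n)\to C^*_\times(L_i;\F_p)$ of Lemma \ref{lem:chainmapfromuniversalunbounded} is then $Q$-invariant on the nose, so no rectification or symmetric-model argument is needed --- and none is supplied by $d<p$.

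Second, your final step overclaims: you assert that a $Q$-equivariant zig-zag which is multiplicative \emph{on cohomology} yields ring isomorphisms $H^*(G_i;\F_p)\cong H^*(G_{i'};\F_p)$. The maps in the paper's zig-zag (in particular $\varphi_o$ and $\phi$) are not multiplicative at the cochain level --- only $H^*(\phi)$ is a ring map --- and feeding this into the hypercohomology spectral sequence yields, via Lemma \ref{lemma:homalgalgebra}, only isomorphisms of the \emph{associated bigraded algebras} for suitable filtrations of $H^*(G_i;\F_p)$; the paper explicitly warns, in the Remark following Lemma \ref{lemma:homalgalgebra}, that it does not claim the graded algebras themselves are isomorphic. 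Consequently the finiteness conclusion cannot be reached the way you state it: one must invoke Carlson's counting theorem (Theorem \ref{Car2}), which says a finitely generated graded-commutative $\F_p$-algebra is determined within finitely many possibilities by its associated bigraded algebra, and your proposal never appeals to it. Two minor points: in the statement $Q$ is a single fixed subgroup of $P\wr S$, so your finiteness-over-subgroups observation is unnecessary; and the paper uses one universal model $U(p,d,n)$, independent of $i$, rather than a family of small models $M_i^{\otimes n}$ with finitely many equivariant types, so no finiteness argument at the level of coefficient models is required either.
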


Again, we also show here that all these rings are isomorphic as graded $\F_p$-modules. Next we move to uniserial $p$-adic space groups, as they give rise to constructible groups. So let  $R$ be a uniserial $p$-adic space group with translation group $T$ and point group $P$, and let $T_0$ be the minimal $P$-lattice  for which the extension $T_0\to R_0\to P$ splits (Subsection \ref{subsection:uniserialpadicspacegroupsandconstructible}). We prove the next result either using Proposition \ref{prop:introcommonintegralliftingsamecohomology} and Nakaoka's theorem on wreath products or more directly by employing Proposition \ref{prop:introcommonintegralliftingsamecohomologyunbounded}. The connection with the aforementioned integral liftings is that, up to conjugation, $P$ may be chosen to act by integral matrices on the lattice $T$. The relation to the symmetric group is that the standard uniserial $p$-adic space group has a point group that involves $\Sigma_n$.

\begin{Prop}\label{prop:introspacegroupssamecohomology}
There are finitely many isomorphism types of rings for the cohomology rings $H^*(T_0/U\rtimes P;\F_p)$ for the infinitely many $P$-invariant lattices $U<T$.
\end{Prop}

From here, we prove Theorem \ref{thm:intronontwist} by using certain refinements of Carlson's counting arguments for spectral sequences (Section \ref{section:countingtheorems}), Leedham-Green's classification and a detailed description of constructible groups. An immediate consequence of Proposition \ref{prop:introspacegroupssamecohomology} is the following result, in which we have dropped the splitting condition.

\begin{Prop}\label{prop:introspacegroupssamecohomologynonsplit}
There are finitely many possibilities for the ring structure of the graded $\F_p$-module $H^*(R/U;\F_p)$ for the infinitely many $P$-invariant lattices $U<T$. 
\end{Prop}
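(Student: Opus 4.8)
The plan is to deduce Proposition \ref{prop:introspacegroupssamecohomologynonsplit} from the already-established Proposition \ref{prop:introspacegroupssamecohomology} by relating the cohomology of the possibly non-split quotients $R/U$ to that of the split quotients $T_0/U \rtimes P$. The key observation is that $R/U$ and $T_0/U \rtimes P$ fit into extensions with the same kernel $T_0/U$ and the same quotient $P$; they differ only in whether the extension splits. Since $P$ is a finite $p$-group acting on the abelian group $T_0/U$, both cohomologies are computed by the Lyndon--Hochschild--Serre spectral sequence
\begin{equation*}
E_2^{s,t} = H^s\bigl(P; H^t(T_0/U; \F_p)\bigr) \Longrightarrow H^{s+t}(R/U; \F_p),
\end{equation*}
and the $E_2$-page depends only on the $P$-module structure of $T_0/U$, which is common to both extensions. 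Thus the split quotient $T_0/U \rtimes P$ and the genuine quotient $R/U$ share an identical $E_2$-page; they can differ only in the differentials and the extension problems, which are governed by finitely many bounded pieces of data.

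First I would fix the setup: for each $P$-invariant lattice $U < T$, note that $U$ must contain $T_0$ (or some fixed finite-index refinement thereof) up to finitely many exceptions, so that $T_0/U$ is a finite abelian $p$-group of rank at most $d = \rk(T)$ with a $P$-action. I would then invoke Proposition \ref{prop:introspacegroupssamecohomology} to conclude that across all such $U$ the cohomology rings $H^*(T_0/U \rtimes P; \F_p)$ fall into finitely many isomorphism types, and in particular the associated graded rings arising from the spectral-sequence filtration take finitely many forms. Since the $E_2$-pages of the split and non-split cases coincide, the only freedom in passing to $R/U$ lies in the higher differentials $d_r$ for $r \geq 2$ and in the multiplicative extension data assembling $E_\infty$ into $H^*(R/U; \F_p)$. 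The central point is that all of these are controlled by finitely much information once the $E_2$-page is fixed.

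The second ingredient is the counting machinery for spectral sequences promised in Section \ref{section:countingtheorems}: a bound on the number of possible differential patterns and extension structures on a spectral sequence with a fixed $E_2$-page, depending only on finitely many bounded invariants. Applying that machinery, I would argue that since $E_2$ ranges over finitely many isomorphism types (by Proposition \ref{prop:introspacegroupssamecohomology}) and each such $E_2$-page admits only finitely many compatible families of differentials and extensions, the resulting graded ring $H^*(R/U; \F_p)$ can take only finitely many possible ring structures as $U$ varies. I would be careful to phrase the conclusion as finitely many possibilities for the ring structure on the graded $\F_p$-module, since controlling the additive structure is immediate from Proposition \ref{prop:introspacegroupssamecohomology} while the multiplicative extensions require the counting theorems.

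The main obstacle I anticipate is precisely the multiplicative extension problem: knowing $E_\infty$ as a bigraded ring does not by itself determine the ring structure of $H^*(R/U; \F_p)$, because the filtration can introduce nontrivial products between elements of different filtration degrees. Establishing that these hidden extensions range over only finitely many possibilities is the crux, and it is exactly the content I expect Section \ref{section:countingtheorems} to supply. A secondary subtlety is ensuring that the twisting datum distinguishing $R/U$ from its split analogue does not enlarge the $E_2$-page or the range of differentials beyond what the split case already exhibits; here one uses that the extension class of $R/U$ lives in $H^2(P; T_0/U)$, a group whose relevant structure is already captured by the common $E_2$-page, so it contributes no new type beyond the finitely many already accounted for.
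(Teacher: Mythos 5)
There is a genuine gap, and it starts with a factual error in your ``key observation.'' The group $T_0/U$ is not a subgroup of $R/U$: the whole point of introducing $T_0$ is that $R$ need not split over $T$, and $R$ only sits inside $R_0=T_0\rtimes P$ with index $|R_0:R|=|T_0:T|$, which is in general nontrivial. The quotient $R/U$ is an extension $1\to T/U\to R/U\to P\to 1$, so the spectral sequence you display, with $E_2^{s,t}=H^s(P;H^t(T_0/U;\F_p))$ converging to $H^*(R/U;\F_p)$, is simply wrong. You could repair the comparison by pairing $R/U$ with $T/U\rtimes P$ (both are extensions of $P$ by the $P$-module $T/U$, hence share an $E_2$-page), but Proposition \ref{prop:introspacegroupssamecohomology} is about $T_0/U\rtimes P$, so even the repaired version needs an extra step to control $H^*(T/U\rtimes P;\F_p)$ — and that step would itself use Theorem \ref{Car1}, the tool your argument never invokes. (Your parenthetical claim that $U$ ``must contain $T_0$'' is also backwards: $U<T<T_0$.)

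The deeper problem is that Section \ref{section:countingtheorems} does not supply the black box you lean on, namely ``a fixed $E_2$-page admits only finitely many differential patterns and extension structures.'' What that section actually provides is: Theorem \ref{Car2} (a filtration with known bigraded algebra determines the ring within finitely many possibilities — this does handle your multiplicative-extension worry, but only once $E_\infty$ is controlled), Theorem \ref{Car1} (subgroups of bounded index), and Theorem \ref{thm:cohringsfromquotient} (extensions whose \emph{kernel has bounded order}, where the stopping page of the spectral sequence is bounded via powerful $p$-central subgroups with the $\Omega$EP and Weigel's theorem). In your extension the kernel $T/U$ has unbounded order as $U$ shrinks, so Theorem \ref{thm:cohringsfromquotient} does not apply, and without a uniform bound over all $U$ on the page at which the Lyndon--Hochschild--Serre spectral sequence stops, the claim that finitely many $E_2$-pages yield finitely many $E_\infty$-pages is unjustified; bounding that page is exactly the hard content of Carlson's machinery, not a citable consequence of it. The paper's own proof avoids all of this in one line: $R/U\leq T_0/U\rtimes P$ with index $|T_0:T|$ independent of $U$, so Proposition \ref{prop:introspacegroupssamecohomology} together with Theorem \ref{Car1} (finitely many cohomology rings of subgroups of bounded index) immediately gives the conclusion.
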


 Regarding the twisted case of Carlson's conjecture, we reduce it to a similar problem about realizations of abstract isomorphism between cohomology rings of certain $p$-groups. In this case, we are concerned about an abelian $p$-group $A$ and its twisted version $A_\lambda$ (Subsection \ref{subsection:twistedabelianpgroups}), and again we consider invariance under the action of certain $p$-group $P$. Under mild assumptions, that hold in the context of the Leedham-Green classification, the cohomology rings with coefficients in $\F_p$ of $A$ and $A_\lambda$ are abstractly isomorphic (Subsection \ref{subsection:cohotwistedabelianpgroup}).

\begin{Conj}\label{conj:intro}
The abstract isomorphism $H^*(A;\F_p)\cong H^*(A_\lambda;\F_p)$ can be realized in the category of cochain complexes via a zig-zag of $P$-invariant quasi-isomorphisms.
\end{Conj}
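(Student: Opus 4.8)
The plan is to upgrade the non-twisted zig-zag of Proposition \ref{prop:introabeliansamecohomology} to the twisted, $P$-equivariant setting by exploiting the fact that $A_\lambda$ is a central extension of abelian $p$-groups whose associated graded recovers $A$. First I would fix the filtration of $A_\lambda$ coming from its lower central series, equivalently the central extension $1\to[A_\lambda,A_\lambda]\to A_\lambda\to Q\to1$ with $Q$ abelian determined by $\lambda$ as in Subsection \ref{subsection:twistedabelianpgroups}. This filtration is characteristic and hence $P$-stable, and its associated graded group is exactly the abelian group $A$ equipped with its $P$-action. On the level of associated graded cochain complexes the comparison is therefore already provided by Proposition \ref{prop:introabeliansamecohomology}; the task is to lift that comparison through the filtration while keeping everything equivariant. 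I would carry out this lifting by homological perturbation: choose explicit cochain models $C^*(A)=(V,d_0)$ and $C^*(A_\lambda)=(V,d_0+\delta_\lambda)$ on a \emph{common} underlying graded $\F_p[P]$-module $V$, built from a $P$-stable generating set, so that the only difference between the two differentials is a correction term $\delta_\lambda$ supported on the part of the complex that detects the commutator pairing. The conjecture then reduces to realizing the abstract isomorphism by a $P$-invariant quasi-isomorphism relating $(V,d_0)$ and $(V,d_0+\delta_\lambda)$, which I would try to produce by transferring $d_0+\delta_\lambda$ across the contraction underlying the non-twisted zig-zag and exhibiting the transferred differential as $P$-equivariantly conjugate to $d_0$.

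The correction $\delta_\lambda$ represents a class on the $E_2$-page of the Lyndon--Hochschild--Serre spectral sequence of the central extension, a bigraded object carrying a $P$-action. The mild assumptions of Subsection \ref{subsection:cohotwistedabelianpgroup} that make $H^*(A;\F_p)\cong H^*(A_\lambda;\F_p)$ abstractly guarantee that this class is trivializable \emph{non-equivariantly}, i.e.\ that there is a cochain homotopy killing $\delta_\lambda$ after forgetting the $P$-action. The entire difficulty is to promote this to a $P$-equivariant homotopy, and this is where I expect the main obstacle to lie. Over $\F_p$ with $P$ a $p$-group one cannot simply average a non-equivariant homotopy, since $\lvert P\rvert$ is not invertible; the naive symmetrization argument that would close the zig-zag in characteristic zero is unavailable precisely in the case of interest.

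To circumvent this I would insist that the models $(V,d_0)$ and $(V,d_0+\delta_\lambda)$ be complexes of \emph{free} (equivalently projective) $\F_p[P]$-modules, arranged using the common integral lifting of the $P$-action and the regular-representation structure exactly as in Proposition \ref{prop:introcommonintegralliftingsamecohomology}. For such complexes quasi-isomorphisms are genuine $P$-equivariant homotopy equivalences, so a non-equivariant homotopy can be replaced by an equivariant one without averaging. The hypothesis $d<p$ enters here to ensure that the symmetric-power summands of $V$ detecting the commutator pairing remain in the range where the relevant $\F_p[P]$-modules decompose into permutation-type (hence, after the integral lifting, well-behaved) pieces, so that the obstruction to equivariant lifting, a priori living in a higher derived functor of $P$-invariants, is forced to vanish. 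The crux of the conjecture is thus the \emph{equivariant} obstruction analysis: establishing that, under the Leedham-Green hypotheses and $d<p$, the transferred perturbation $\delta_\lambda$ bounds by a $P$-invariant cochain. If this vanishing holds the zig-zag closes up as in the non-twisted case; if it does not, one is left to analyze the surviving class directly, and I expect that residual step to be the genuine difficulty separating this conjecture from the theorems already established.
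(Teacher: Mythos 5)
First, a point of order: the statement you are addressing is stated in the paper as a \emph{conjecture} (Conjecture \ref{conj:intro}, refined as Conjecture \ref{conj:detailed}); the authors offer no proof of it, only a proof that it implies the twisted case of Carlson's conjecture (Proposition \ref{prop:isoAAlambdartimesPrealized} and Theorem \ref{thm:Carslonconj}). Your text, by its own admission, does not prove it either: the pivotal step --- showing that the transferred perturbation $\delta_\lambda$ bounds by a $P$-invariant cochain --- is left open (``If this vanishing holds\dots if it does not\dots''), and that step \emph{is} the conjecture. Everything before it is a reformulation: passing to the central extension $\im(\lambda)\to A_\lambda\to A_\lambda/\im(\lambda)$, writing $C^*(A_\lambda)$ as a perturbation of $C^*(A)$, and observing that non-equivariantly the two complexes are quasi-isomorphic (automatic over a field once the cohomologies agree, e.g.\ by Theorem \ref{ThmWeigel}\eqref{ThmWeigelHG}) relocates the difficulty without reducing it. Note also a small inaccuracy along the way: the associated graded of the filtration of $A_\lambda$ by $\im(\lambda)$ is $\im(\lambda)\oplus A/\im(\lambda)$, which is isomorphic to $A$ only when the corresponding filtration of $A$ splits, so ``the associated graded group is exactly $A$'' is not correct in general.

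The one concrete mechanism you propose to close the gap --- replacing both sides by complexes of free $\F_p P$-modules so that ``quasi-isomorphisms are genuine $P$-equivariant homotopy equivalences'' --- does not work as stated, for two reasons. First, it begs the question: it is true that a $P$-equivariant quasi-isomorphism between bounded cochain complexes of projective $\F_p P$-modules is an equivariant homotopy equivalence, but the problem is the \emph{existence} of any $P$-equivariant quasi-isomorphism between the projective replacements $V$ and $V'$ of $C^*(A;\F_p)$ and $C^*(A_\lambda;\F_p)$. A non-equivariant homotopy equivalence $V\simeq V'$ provides no $P$-map at all, and an isomorphism $H^*(V)\cong H^*(V')$, even as graded $\F_p P$-modules, does not imply $V\cong V'$ in the derived category of $\F_p P$; projectivity lets you lift an equivariant map along a quasi-isomorphism once you have one, but it does not manufacture one. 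This is exactly the obstruction the conjecture asserts vanishes, and your appeal to $d<p$ and ``permutation-type pieces'' is a heuristic, not an argument. Second, the detailed form of the conjecture (Conjecture \ref{conj:detailed}) requires each intermediate complex $U_i$ to carry a \emph{product} with every morphism inducing a ring isomorphism in cohomology --- this is what Lemma \ref{lemma:homalgalgebra} needs in order to compare the algebras $H^*(A\rtimes P;\F_p)$ and $H^*(A_\lambda\rtimes P;\F_p)$. Cartan--Eilenberg-style projective replacements are not differential graded algebras, and your proposal supplies no mechanism for restoring the multiplicative structure on the free models; so even if the equivariant homotopy existed, the zig-zag produced would not satisfy the conjecture as stated. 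In short: the reduction to an equivariant trivialization problem is a reasonable way to think about the conjecture, but the proposal neither solves that problem nor preserves the multiplicative constraints the paper's application requires.
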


We prove that Conjecture \ref{conj:intro} implies the twisted case of Carlson's conjecture, and hence also Carlson's conjecture in full generality.

\begin{Thm}\label{thm:introgeneral}
If Conjecture \ref{conj:intro} holds, then for any prime $p$ there are finitely many isomorphism types of cohomology rings with coefficients in $\F_p$ of $p$-groups of a fixed coclass.
\end{Thm}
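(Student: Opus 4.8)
The plan is to show how Conjecture~\ref{conj:intro} feeds into the same machinery that produced Theorem~\ref{thm:intronontwist}, but now for twisted groups. The structure of Leedham-Green's classification is uniform across the twisted/non-twisted dichotomy: every $p$-group $G$ of fixed coclass $c$ admits a normal subgroup $N$ with $|N|\leq f(p,c)$ such that $G/N$ is constructible, and in the twisted case $G/N$ arises from a uniserial $p$-adic space group whose translation group now carries a nontrivial twisting homomorphism. So the first step is to reduce, exactly as in the non-twisted argument, from arbitrary twisted $p$-groups of coclass $c$ to the constructible twisted quotients $G/N$: since there are only finitely many isomorphism types of $N$ and finitely many extension classes once the cohomology ring structure of $G/N$ is pinned down, the counting arguments of Section~\ref{section:countingtheorems} (refinements of Carlson's spectral-sequence counting) bound the number of cohomology rings of $G$ in terms of the number of cohomology rings of the groups $G/N$. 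This part is formally identical to the non-twisted case and uses nothing beyond what is already established.

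The second step is the heart of the matter: to prove that there are finitely many isomorphism types of rings $H^*(G/N;\F_p)$ as the constructible twisted quotient varies. Here the constructible group is built from a twisted abelian translation group, so the relevant lattice quotients are twisted abelian $p$-groups $A_\lambda$ rather than the ordinary abelian $A$ appearing in the non-twisted setting. The idea is to run the analogue of Proposition~\ref{prop:introspacegroupssamecohomology}: I would compare $H^*(A_\lambda/U\rtimes P;\F_p)$ across the infinitely many $P$-invariant lattices $U$ by comparing them all to the untwisted model $H^*(A/U\rtimes P;\F_p)$, whose finiteness is already known. The bridge between the twisted and untwisted towers is precisely the $P$-invariant zig-zag of quasi-isomorphisms granted by Conjecture~\ref{conj:intro}: a $P$-invariant quasi-isomorphism of cochain complexes descends, via the (hyper)cohomology spectral sequence for the semidirect product with $P$ (or equivalently via taking $P$-equivariant cohomology), to a ring isomorphism $H^*(A_\lambda\rtimes P;\F_p)\cong H^*(A\rtimes P;\F_p)$, and the same goes through with $U$-quotients. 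Thus each twisted ring is isomorphic to a corresponding untwisted one, and the untwisted rings fall into finitely many isomorphism classes by the work underlying Proposition~\ref{prop:introspacegroupssamecohomology}.

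The main obstacle I anticipate is ensuring that the $P$-invariance in Conjecture~\ref{conj:intro} is the correct, usable form of invariance for pushing through the semidirect-product step, and that it interacts well with passing to the lattice quotients $A_\lambda/U$. The conjecture is stated for $A$ and $A_\lambda$ themselves, so I would first check that $P$-invariant quasi-isomorphisms are compatible with the $P$-equivariant restriction/quotient maps $A_\lambda\twoheadrightarrow A_\lambda/U$, so that the zig-zag for $A_\lambda$ restricts to one for each $A_\lambda/U$ simultaneously; this is where the uniformity of the construction (a single zig-zag working for all $U$ at once, as in the proof strategy behind Proposition~\ref{prop:introspacegroupssamecohomology}) becomes essential. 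A secondary technical point is verifying the ``mild assumptions'' alluded to in Subsection~\ref{subsection:cohotwistedabelianpgroup} that guarantee the abstract ring isomorphism $H^*(A;\F_p)\cong H^*(A_\lambda;\F_p)$ genuinely holds in the range of parameters dictated by the Leedham-Green classification, so that Conjecture~\ref{conj:intro} is even applicable. Once these compatibility and applicability checks are in place, the finiteness statement of Theorem~\ref{thm:introgeneral} follows by combining the reduction of the first step with the twisted analogue of Proposition~\ref{prop:introspacegroupssamecohomology} obtained in the second step, exactly mirroring the derivation of Theorem~\ref{thm:intronontwist}.
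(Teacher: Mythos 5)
Your high-level skeleton (reduce to constructible quotients, bridge twisted and untwisted via the conjecture, finish with the counting theorems) matches the paper, but two steps at the heart of your argument would fail as written. First, Conjecture~\ref{conj:intro} does \emph{not} descend to a ring isomorphism $H^*(A_\lambda\rtimes P;\F_p)\cong H^*(A\rtimes P;\F_p)$. The quasi-isomorphisms in the zig-zag are only required to induce ring isomorphisms in cohomology, not to be multiplicative at the cochain level, so after applying $\Hom_{\F_pP}(B_*(P;\F_p),-)$ and passing to the column-filtration spectral sequence one obtains only an isomorphism of the associated bigraded algebras of suitable filtrations (Lemma~\ref{lemma:homalgalgebra}; the paper explicitly disclaims the ring isomorphism in the remark following it, and records the correct conclusion as Proposition~\ref{prop:isoAAlambdartimesPrealized}). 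That weaker output is then converted into finiteness via Carlson's Theorem~\ref{Car2}; your plan of ``each twisted ring is isomorphic to a corresponding untwisted one'' skips this and is an overclaim.

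Second, and more seriously, the conjecture carries hypotheses --- $A_\lambda$ powerful, $p$-central, with the $\Omega$EP --- that have no reason to hold for an arbitrary lattice quotient $(T_0/U,+_\lambda)$, so your scheme of applying one zig-zag for $A_\lambda$ and restricting it compatibly to all quotients $A_\lambda/U$ at once cannot get off the ground (nor does the conjecture provide any naturality with respect to quotient maps). The paper's actual proof of Theorem~\ref{thm:Carslonconj} replaces this with a three-case analysis on the relative positions of $U$, $V$ and $p^kT_0$: in Case~1 ($\tfrac{1}{p^3}U\leq V\leq pT_0$) the conjecture is applied only to the specific quotient $(T_0/p^iT_0,+_{\lambda'})$, which Lemma~\ref{propertiesT0U+*} shows is powerful $p$-central with the $\Omega$EP, and the bounded-size kernel $(p^iT_0/U,+_\lambda)$ is absorbed by Theorem~\ref{thm:cohringsfromquotient}; in Case~2 ($pT_0\leq V$) the twist vanishes on the relevant quotient so no conjecture is needed; in Case~3 ($p^3V\leq U$) the kernel $V/U$ is bounded and the quotient $T_0/V\rtimes P$ is untwisted. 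Each case also needs the rank bound on $(T_0/U,+_\lambda)\rtimes P$ and a nilpotency-class-$2$ subgroup of bounded index in the quotient, since those are hypotheses of Theorem~\ref{thm:cohringsfromquotient} --- which, note, works via Weigel's theorem forcing the spectral sequence to stop at a bounded page, not via ``finitely many extension classes'' as you suggest in your first step. Without the case decomposition and the bigraded-algebra workaround, your proposal does not yield the theorem.
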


Explicit isomorphisms among cohomology rings of certain families of $p$-groups are expected, see \cite[Question 6.1]{Carlson}. In \cite[Conjecture 3]{BettinaDavid2015}, Eick and Green  refine this question via coclass families, and prove that it holds asymptotically up to $F$-isomorphism. In our setting, we expect that there is just one isomorphism type for the collections in Propositions \ref{prop:introcommonintegralliftingsamecohomology} and \ref{prop:introcommonintegralliftingsamecohomologyunbounded}. Forgetting the ring structure, G. Ellis has proven in \cite{Ellis2016} by other methods that, for a uniserial $p$-adic space group $R$ with translation group $T$ of  dimension $d$, $H_n(R/T_i;\F_p)$ and $H_n(R/T_{i+d};\F_p)$ are isomorphic $\F_p$-vector spaces for $i$ big enough and any $n$. Here, $\{T_i\}_{i\geq 0}$ is the \emph{uniserial filtration} of $R$, see Subsection \ref{subsection:uniserialpadicspacegroupsandconstructible}. We expect that the cohomology rings $H^*(R/T_i;\F_p)$ and $H^*(R/T_{i+d};\F_p)$ are isomorphic, and Proposition \ref{prop:introspacegroupssamecohomologynonsplit} already points in that direction. Here it is an exhaustive account of the layout of this work.
\begin{enumerate}[{Section} 1:]
 \setcounter{enumi}{1}
\item We fix notation for the homological algebra objects that we use (Subsection \ref{subsection:notationsandsignconventions}), including the standard resolution (Subsection \ref{subsection:standardresolution}) and resolutions for semidirect products (Subsection \ref{subsection:resolution-semidirect-product}). We also prove a lemma (Subsection \ref{subsection:somehomalg}) that is fundamental to deal with cohomology rings of  semidirect products, and hence for Propositions \ref{prop:introcommonintegralliftingsamecohomology} and \ref{prop:introcommonintegralliftingsamecohomologyunbounded}.
\item Here we summarise results about uniserial $p$-adic space groups and Leedham-Green's constructible groups (Subsection \ref{subsection:uniserialpadicspacegroupsandconstructible}), twisted abelian $p$-groups (Subsections \ref{subsection:twistedabelianpgroups} and \ref{subsection:furtherpropertiesoftwistedabelian}), powerful $p$-central groups (Subsection \ref{subsection:pcentralpowerfulomegaextprop}) and standard  uniserial $p$-adic space groups (Subsection \ref{subsection:standarduniserialpadicspacegroups}). We also introduce split constructible groups and give a very detailed description of them via twisted abelian $p$-groups (Lemma \ref{lemma:GalphaGalpha0}).
\item We prove certain refinements of Carlson's counting arguments in \cite{Carlson}.
\item This section is devoted to cohomology of abelian $p$-groups of small rank (Subsection \ref{subsection:cohoabeliansmallrank}), their semidirect products (Subsection \ref{subsection:cohomologysmallrank}) and the unbounded rank case (Subsection \ref{subsection:cohomologyunboundedrank}). We prove Proposition \ref{prop:introabeliansamecohomology}, \ref{prop:introcommonintegralliftingsamecohomology} and \ref{prop:introcommonintegralliftingsamecohomologyunbounded}  as Corollary \ref{cor:abeliansamecohomology} and Propositions \ref{prop:commonintegralliftingsamecohomology} and \ref{prop:commonintegralliftingsamecohomologyunbounded} respectively.
\item Cohomology of uniserial $p$-adic groups is addressed (Subsections \ref{subsection:cohostandardpadicspacegroup} and \ref{subsection:cohopadicspacegroups}) and Propositions \ref{prop:introspacegroupssamecohomology} and \ref{prop:introspacegroupssamecohomologynonsplit}  are proven as Proposition \ref{prop:spacegroupssamecohomology}  and Corollary \ref{cor:spacegroupssamecohomologynonsplit}  respectively (see also Propositions \ref{prop:standardspacegroupsamecohomology} and  \ref{prop:spacegroupssamebigradedalgebras}). Moreover, cohomology of twisted abelian $p$-groups is studied (Subsection \ref{subsection:cohotwistedabelianpgroup}), a fully detailed version of Conjecture \ref{conj:intro} is given as Conjecture \ref{conj:detailed} and some of its consequences are proven.
\item In Theorem \ref{thm:Carslonconj}, we give a proof that Theorem \ref{thm:introgeneral} holds assuming Conjecture \ref{conj:intro}/\ref{conj:detailed}. The same proof shows that the non-twisted case Theorem \ref{thm:intronontwist} holds with no assumptions.
\end{enumerate}

\noindent \textbf{Acknowledgements:} We would like to thank Alex Gonz\'alez, Lenny Taelman and Antonio Viruel for some discussions and advice related to this work. We are also really grateful to the referee for performing a meticulous reading and for all his/her comments.

Antonio Díaz Ramos is supported by MICINN grant RYC-2010-05663 and partially supported by MEC grant MTM2013-41768-P and Junta de Andaluc{\'\i}a grant FQM-213. Oihana Garaialde Ocaña acknowledges the  support by grants MTM2011-28229-C02-02 and  MTM2014-53810-C2-2-P, from the Spanish Ministry of Economy and Competitivity and by the Basque Government, grants IT753-13, IT974-16 and Ph.D. grant PRE$\underline{\ }2015\underline{\ }2\underline{\ }0130$. Jon Gonzalez-Sanchez acknowledges the  support by grants MTM2011-28229-C02-01 and  MTM2014-53810-C2-2-P, from the Spanish Ministry of Economy and Competitivity,  the Ramon y Cajal Programme of the Spanish Ministry of Science and Innovation, grant RYC-2011-08885, and by the Basque Government, grants IT753-13 and IT974-16.

\section{Preliminaries: Homological algebra}
\label{section:preliminaries}

Throughout the paper we denote by $k$ a field and by $G$ a (possibly infinite) discrete group. We also denote the group algebra of $G$ with coefficients in $k$ by $kG$. We shall deal with (bounded below) single complexes and (first quadrant) double complexes in the category of $kG$-modules. We define a \emph{quasi-isomorphism} as a map of cochain complexes which induce a $k$-isomorphism in cohomology in all degrees. Finally, for a prime $p$, we shall often use the cyclic group with $p^i$ elements, which we denote by $C_{p^i}$. If $p$ is even we further assume that $i\geq 2$. All the material presented in this section but subsection \ref{subsection:somehomalg} is standard and can be found in \cite{KSBrown82}, \cite{LEvens91}, \cite{CAWeibel94} and \cite{May67}.

\subsection{Notations and sign convention}\label{subsection:notationsandsignconventions} If $A_*$ is a chain complex of $kG$-modules with differential $d_A$ and $M$ is a $kG$-module, then $\Hom_{kG}(A_*,M)$ becomes a cochain complex with $d(f)(a)=(-1)^{n+1}f(d_A(a))$, where $f\in \Hom_{kG}(A_n,M)$ and $a\in A_{n+1}$. If $A_*$ and $B_*$ are chain or cochain complexes of $kG$-modules with differentials $d_A$ and $d_B$ we denote by $C=A\otimes_{kG} B$ the double complex $C_{n,m}=A_n\otimes_{kG} B_m$ with differentials 
\[
d_h(a\otimes b)=d_A(a)\otimes b\text{ and }d_v(a\otimes b)=(-1)^n a\otimes d_B(b)\textit{, for $a\otimes b\in A_n\otimes_{kG} B_m$}.
\] 
Similarly, let $A_*$ be a chain complex of $kG$-modules and let $B^*$ be a cochain complex of $kG$-modules with differentials $d_A$ and $d_B$. Denote by $D=\Hom_{kG}(A,B)$ the double complex $D^{n,m}=\Hom_{kG}(A_n, B^m)$. The differentials  are given in this case by 
\[
d^h(f)(a)=(-1)^{n+m+1}f(d_A(a))\text{ and }d^v(f)(a)=d_B(f(a))\textit{, for $f\in \Hom_{kG}(A_n, B^m)$.}
\]
As usual, the total complex $\tot(C)$ is a chain complex with differential $d_h+d_v$ and $\tot(D)$ is a cochain complex with differential $d^h+d^v$.

Assume now $N\unlhd G$, that $A_*$ is a chain complex of $k(G/N)$-modules and that $B_*$ is a chain complex of $kG$-modules. Then for each $n$ and $m$ and each $kG$-module $M$ we have an isomorphism of $k$-modules
\[
\Hom_{kG}(A_n\otimes_k B_m,M)\cong \Hom_{k(G/N)}(A_n,\Hom_{kN}(B_m,M)),
\]
where $G$ acts diagonally on $A_n\otimes_k B_m$ and $G/N$ acts on $f\in \Hom_{kN}(B_m,M)$ via $(\overline g\cdot f)(b)=g\cdot f(g^{-1}\cdot b)$ for $g\in G$ and $b\in B_m$. Consider the double complexes $C=A\otimes_k B$ and $D=\Hom_{kG/N}(A,\Hom_{kN}(B,M))$. Then, via the isomorphism above and with the sign conventions described, the two cochain complexes $\Hom_{kG}(\tot(C),M))$ and $\tot(D)$ are identical.

Finally, by a \emph{product} on a cochain complex $B^*$ with differential $d_B$, we mean a degree preserving $k$-bilinear form
\[
\cup\colon  \tot(B^*\otimes B^*)\to B^*
\]
such that $\cup$ is associative with unit and satisfies the Leibnitz rule, i.e.:
\[
d_B(b\cup b')=d_B(b)\cup b'+(-1)^n b\cup d_B(b'), 
\]
for $b\in B^n$ and $b'\in B^{n'}$. If $B^*$ has a product we say that $B^*$ is a \emph{differential graded algebra}. For instance, if $A^*$ and $B^*$ are differential graded algebras, then $C=A\otimes_k B$ is also a differential graded algebra by defining:
\begin{equation}\label{equ:productforotimes}
(a\otimes b)\cup (a'\otimes b')=(-1)^{n'm}(a\cup_A a')\otimes (b\cup_B b')
\end{equation}
for $b\in B^m$ and $a'\in A^{n'}$. For the standard resolution of a group $G$,  $B_*(G;k)$ (defined in Subsection \ref{subsection:standardresolution}), and a differential graded algebra of $kG$-modules $K^*$, we may define a product $\cup$ on the total complex $\tot(D)$ for $D_{n,m}=\Hom_{KG}(B_n(G;K),K^m)$. For $f_1\otimes f_2\in \Hom_{kG}(B_{n_1}(G;k),K^{m_1})\otimes \Hom_{kG}(B_{n_2}(G;k),K^{m_2})$ define $f_1\cup f_2\in \Hom_{kG}(B_{n_1+n_2}(G;k),K^{m_1+m_2})$ as the function that on $(g_0,\ldots,g_{n_1+n_2})$ evaluates to
\begin{equation}\label{equ:productforHom}
(-1)^{n_1(n_2+m_2)}f_1(g_0,\ldots,g_{n_1})\cup_K f_2(g_{n_1},\ldots,g_{n_1+n_2}),
\end{equation}
where the sign implements the Koszul sign rule. This last construction is more general (see \cite[p.137]{May67}) but we need only this simple version here.

\subsection{Standard resolution and standard cup product}\label{subsection:standardresolution}\label{subsection:standardcupproduct}
We recall the standard resolution $B_*(G;k)$ of the trivial $kG$-module $k$: $B_n(G;k)$ is the free $k$-module $kG^{n+1}$  with diagonal $G$-action
$$
g\cdot(g_0,\ldots,g_n)=(gg_0,\ldots, gg_n).
$$
The differential $\delta_n\colon B_n(G;k)\to B_{n-1}(G;k)$ is the alternate sum $\sum_{i=0}^n (-1)^i\partial_i$, where $\partial_i$ is the $i^{th}$-face map $B_n(G;k)\to B_{n-1}(G;k)$ with 
$$
\partial_i(g_0,\ldots,g_n)=(g_0,\ldots,\widehat g_i,\ldots,g_n).
$$
The augmentation $\epsilon\colon B_0(G;k)\to k$ sends $g_0\mapsto 1$ for all $g_0\in G$. Now let $M$ be any $kG$-module. Then $C^*(G;M)=\Hom_{kG}(B_*(G;k),M)$ is a cochain complex of $k$-modules whose differential $\delta^n\colon C^n(G,k)\to C^{n+1}(G,k)$ is given as in Subsection \ref{subsection:notationsandsignconventions}, i.e., by $\delta^n=(-1)^{n+1}\Hom_{kG}(\delta_{n+1},M)$. Its cohomology is $H^*(G;M)$.

The cup product at the cochain level for the standard resolution,
\[ 
\cup\colon C^*(G;k) \otimes C^*(G;k)\to C^*(G;k),
\]
takes $f_1\otimes f_2\in C^{n_1}(G;k)\otimes C^{n_2}(G;k)$ to $f_1\cup f_2\in C^{n_1+n_2}(G,k)$ defined by 
\begin{equation}\label{equ:standardrescupproduct}
(f_1\cup f_2)(g_0,\ldots,g_{n_1+n_2})=(-1)^{n_1n_2}f_1(g_0,\ldots,g_{n_1})f_2(g_{n_1},\ldots,g_{n_1+n_2}),
\end{equation}
where the sign again implements the Koszul sign rule \cite[p.110]{KSBrown82}. It is well known that $(C^*(G;k),d,\cup)$ is a differential graded $k$-algebra (associative with unit). This product induces in $H^*(G;k)$ the usual cup product, which we denote by the same symbol ($[f_1]\cup [f_2]=[f_1\cup f_2]$). 

\subsection{Resolutions and cup products for semidirect products}\label{subsection:resolution-semidirect-product}

Let $G=N\rtimes P$ be a semidirect product and denote by conjugation $p\cdot n ={} ^p n=pnp^{-1}$ the action of $P$ on $N$. Assume that $N_*$ is a $kN$-resolution of $k$, $P_*$ is a $kP$-resolution of $k$ and that $P$ acts on $N_*$ in such a way that:
\begin{enumerate}
\item The action of $P$ commutes with the augmentation and the differentials of $N_*$.
\item For all $p\in P$, $n\in N$ and $z\in N_*$, $p\cdot (n\cdot z)=(p\cdot n)\cdot (p\cdot z)$ .
\end{enumerate}
Consider the double complex $C=C_{*,*}=P_*\otimes N_*$. 
Then there is an action of $G=N\rtimes P$ over $C$ and over $\tot(C)$ described by
$$
(n,p)\cdot(z_P \otimes z_N)=p\cdot z_P \otimes n\cdot(p \cdot z_N),
$$
where $n \in N,$ $p\in P,$ $z_P \in P_*$ and $z_N \in N_*$. Moreover, following \cite[p. 19]{LEvens91}, $\tot(C)$ is a $kG$-projective resolution of the trivial $kG$-module $k$ and thus, for any $kG$-module $M$, we have $H^*(G;M)\cong H^*(\Hom_{kG}(\tot(C),M))$ as graded $k$-modules. 

As particular case, consider $P_*=B_*(P;k)$ and $N_*=B_*(N;k)$. Then the action of $P$ on $B_*(N;k)$ given by 
\[
p \cdot (n_0, \dots, n_m)=({}^p n_0, \dots,{}^p n_m)
\]
satisfies (1) and (2) above. So $H^*(G;M)\cong H^*(\Hom_{kG}(\tot(C),M))$ as graded $k$-modules for $C=C_{*,*}=B_*(P;k)\otimes B_*(N;k)$. In fact, we can endow $\Hom_{kG}(\tot(C),k)$ with a product as in Equation \eqref{equ:productforHom}: 
\begin{scriptsize}
\[
\Hom_{kP}(B_{n_1}(P;k),C^{m_1}(N;k))\otimes \Hom_{kP}(B_{n_2}(P;k),C^{m_2}(N;k))\to \Hom_{kP}(B_{n_1+n_2}(P;k),C^{m_1+m_2}(N;k))
\] 
\end{scriptsize}sends $f_1\otimes f_2$ to the map that on $(p_0,\ldots,p_{n_1+n_2})$ evaluates to the function that on $(n_0,\ldots,n_{m_1+m_2})$ takes the following value:
\begin{small}
\[
(-1)^{n_1(n_2+m_2)+m_1m_2}f_1(p_0,\ldots,p_{n_1})(n_0,\ldots,n_{m_1})f_2(p_{n_1},\ldots,p_{n_1+n_2})(n_{m_1},\ldots,n_{m_1+m_2}),
\]
\end{small}where again the sign reflects the Koszul sign rule. That this product induces the usual cup product in $H^*(G;k)$ is  a consequence of the criterion \cite[XII.10.4]{SMacLane63}.

\subsection{A lemma for spectral sequences}\label{subsection:somehomalg}

In this subsection we prove the following easy and useful result:

\begin{Lem}\label{lemma:homalg}
Let $G$ be a group, let $K^*$ and ${K'}^*$ be cochain complexes of $kG$-modules and let $P_*$ be a chain complex of free $kG$-modules. If $\varphi\colon K^*\to {K'}^*$ is a quasi-isomorphism then 
\[
\tot(\Hom_{kG}(P_*,K^*))\stackrel{\tot(\varphi_*))}\longrightarrow \tot(\Hom_{kG}(P_*,{K'}^*))
\]
is a quasi-isomorphism  and so $H^*(\tot(\Hom_{kG}(P_*,K^*))\cong H^*(\tot(\Hom_{kG}(P_*,{K'}^*))$ as graded $k$-modules.
\end{Lem}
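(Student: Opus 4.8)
The plan is to prove that $\tot(\varphi_*)$ is a quasi-isomorphism by exhibiting it as a morphism of first-quadrant double complexes and then invoking the standard spectral sequence comparison theorem. First I would set up the two double complexes $D^{n,m}=\Hom_{kG}(P_n,K^m)$ and ${D'}^{n,m}=\Hom_{kG}(P_n,{K'}^m)$, each carrying the horizontal and vertical differentials $d^h$ and $d^v$ described in Subsection \ref{subsection:notationsandsignconventions}. The map $\varphi$ induces, degreewise, a map $\varphi_*\colon D^{n,m}\to {D'}^{n,m}$ by postcomposition, and because $\varphi$ is a chain map of $kG$-modules this commutes with both differentials, so $\tot(\varphi_*)$ is a genuine map of total complexes. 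The goal is then to show it induces an isomorphism on the cohomology of the totalizations.

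The key step is to filter both double complexes by columns, giving two first-quadrant spectral sequences, and to compare them via $\varphi_*$. Taking first the vertical differential, the $E_1$-page in column $n$ is the cohomology of $\Hom_{kG}(P_n,K^*)$ with respect to $d^v$, which (up to sign) is $H^m$ of the cochain complex $\Hom_{kG}(P_n,K^*)$. Here I would use that each $P_n$ is a \emph{free} $kG$-module: writing $P_n$ as a direct sum of copies of $kG$, the functor $\Hom_{kG}(P_n,-)$ splits as a product of copies of the underlying-$k$-module functor, which is exact, so it commutes with taking cohomology in the $K^*$ direction. Thus the induced map on $E_1$-pages is, in each spot, a product of copies of the map $H^m(\varphi)\colon H^m(K^*)\to H^m({K'}^*)$, which is an isomorphism precisely because $\varphi$ is a quasi-isomorphism by hypothesis.

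Once the map is an isomorphism on $E_1$-pages it is automatically an isomorphism on every subsequent page, and since both double complexes are first-quadrant the spectral sequences converge to the cohomology of their respective total complexes. The Eilenberg--Moore / Zeeman comparison theorem for morphisms of convergent first-quadrant spectral sequences then yields that $H^*(\tot(\varphi_*))$ is an isomorphism, giving the desired graded $k$-module isomorphism $H^*(\tot(\Hom_{kG}(P_*,K^*)))\cong H^*(\tot(\Hom_{kG}(P_*,{K'}^*)))$.

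The main obstacle, and the only place where a hypothesis is genuinely used, is the exactness step establishing that $\Hom_{kG}(P_n,-)$ commutes with cohomology in the $K^*$ variable; this is exactly where freeness of $P_*$ is indispensable, since for a non-projective $P_n$ the functor $\Hom_{kG}(P_n,-)$ need not be exact and the identification of the $E_1$-page would fail. Everything else is bookkeeping: checking that $\varphi_*$ respects the sign conventions of $d^h$ and $d^v$ so that it is a bona fide map of double complexes, and verifying the convergence hypotheses of the comparison theorem, both of which are routine given the first-quadrant setup.
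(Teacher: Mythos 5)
Your proposal is correct and takes essentially the same route as the paper's proof: both construct the double complexes $\Hom_{kG}(P_n,K^m)$ and $\Hom_{kG}(P_n,{K'}^m)$ with the postcomposition map, filter by columns, use freeness of $P_n$ to make $\Hom_{kG}(P_n,-)$ exact and so identify the $E_1$-pages (the paper writes this as $\Hom_{kG}(P_n,H^m(K^*))$, which is the same identification your direct-sum argument justifies), and deduce the result by spectral sequence comparison. The only cosmetic difference is that you invoke the Zeeman/Eilenberg--Moore comparison theorem by name, whereas the paper simply notes that an isomorphism on $E_1$ forces isomorphisms $\Phi_r$ for all $r\geq 1$ and hence on the abutment.
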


\begin{proof} Consider the double complexes of $k$-modules $C$ and $C'$ given by  $C_{n,m}=\Hom_{kG}(P_n,K^m)$ and $C'_{n,m}=\Hom_{kG}(P_n,{K'}^m)$. It is clear that the cochain map $\varphi$ induces a morphism of double complexes $\varphi_*\colon C_{*,*} \rightarrow C'_{*,*}$ given by $f \mapsto \varphi \circ f$. Consider the total complexes $\tot(C)$ and $\tot(C')$. The filtrations by columns produce spectral sequences $E$ and $E'$ converging to the cohomology of $\tot(C)$ and $\tot(C')$ respectively.  The first page $E_1$ of $E$  is obtained by taking  cohomology with respect to the vertical differential:
\[
{E_1}^{n,m}= H^m(\Hom_{kG}(P_n, K^*))=\Hom_{kG}(P_n, H^m(K^*)),
\]
where $\Hom_{kG}(P_n, \cdot)$ commutes with the cohomology functor because $P_n$ is a free $kG$-module and hence $\Hom_{kG}(P_n, \cdot)$ is an exact functor. As the differential of $K^*$ commutes with the $G$-action, $H^*(K)$ becomes a $kG$-module and $\Hom_{kG}(P_*, H^*(K))$ is well-defined. The morphism of double complexes $\varphi_*$ induces a morphism of spectral sequences $\Phi: E \rightarrow E'$. Between the first pages, the morphism 
$$
\Phi_1:\Hom_{kG}(P_*, H^*(K)) \rightarrow \Hom_{kG}(P_*, H^*(K'))
$$ 
is given by post-composing with $H^*(\varphi)$, that is, $f \mapsto H^*(\varphi) \circ f$. This  is an isomorphism as $H^*(\varphi)$ is an isomorphism by hypothesis. Therefore, all morphisms $\Phi_r\colon E_r\rightarrow E'_r$ are isomorphisms for $r\geq 1$ and 
$$
H^*(\tot(\varphi_*))\colon H^*(\tot(C))\cong H^*(\tot(C'))
$$
is an isomorphism too. 
\end{proof}

We shall need the following version  of the previous lemma that involves products:

\begin{Lem}\label{lemma:homalgalgebra}
With the hypothesis of Lemma \ref{lemma:homalg}, suppose in addition that:
\begin{enumerate}
\item $P_*=B_*(G;k)$ is the standard resolution of $G$.
\item $K^*$ and $K'^*$ are equipped with products $\cup$ and $\cup'$.
\item $H^*(\varphi)\colon H^*(K^*)\to H^*(K'^*)$ preserves the induced products.
\end{enumerate} 
Then there are filtrations of the graded algebra $H^*(\tot(\Hom_{kG}(P_*,K^*)))$ and of the graded algebra $H^*(\tot(\Hom_{kG}(P_*,{K'}^*)))$ such that the associated bigraded algebras are isomorphic. 
\end{Lem}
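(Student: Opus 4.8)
The plan is to upgrade the argument of Lemma~\ref{lemma:homalg} so that it tracks the multiplicative structure, using that the product on $\tot(\Hom_{kG}(B_*(G;k),K^*))$ described in Equation~\eqref{equ:productforHom} is compatible with the column filtration. First I would observe that, because $P_*=B_*(G;k)$ is the standard resolution and $K^*$, ${K'}^*$ carry products $\cup$, $\cup'$, the total complexes $\tot(C)$ and $\tot(C')$ become differential graded algebras under the product of Equation~\eqref{equ:productforHom}. The cohomology groups $H^*(\tot(C))$ and $H^*(\tot(C'))$ are then graded algebras, and the column filtration of the double complex induces a multiplicative filtration on each of them, meaning $F^i\cdot F^j\subseteq F^{i+j}$. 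Hence the associated graded objects $E_\infty$ inherit a bigraded algebra structure, and these bigraded algebras are exactly the associated bigraded algebras referred to in the statement.

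The next step is to check that the map $\Phi\colon E\to E'$ from the proof of Lemma~\ref{lemma:homalg} is a morphism of multiplicative spectral sequences. The morphism $\varphi_*\colon \tot(C)\to\tot(C')$ given by $f\mapsto\varphi\circ f$ respects the filtration by columns, since $\varphi$ acts only on the $K^*$-coordinate (the vertical direction) and leaves the horizontal $B_*(G;k)$-coordinate untouched. I would verify directly from Equation~\eqref{equ:productforHom} that $\varphi_*(f_1\cup f_2)$ and $\varphi_*(f_1)\cup'\varphi_*(f_2)$ agree \emph{on the nose} at the cochain level: the Koszul signs in Equation~\eqref{equ:productforHom} depend only on the homological degrees $n_i,m_i$, not on $\varphi$, and $\varphi$ is assumed to be a morphism of differential graded algebras on the relevant coordinate --- or, if one only wants hypothesis (3), one argues multiplicativity on the $E_1$-page where $H^*(\varphi)$ is applied. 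Thus $\Phi$ is a filtration-preserving algebra map, and consequently each $\Phi_r\colon E_r\to E_r'$ is a morphism of bigraded algebras.

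With this in place, the conclusion follows from the isomorphism part of Lemma~\ref{lemma:homalg}. By that lemma every $\Phi_r$ is an isomorphism of $k$-modules for $r\geq 1$; combined with the previous step, each $\Phi_r$ is an isomorphism of bigraded algebras, and in particular $\Phi_\infty\colon E_\infty\to E_\infty'$ is an isomorphism of bigraded algebras. Since $E_\infty$ and $E_\infty'$ are precisely the bigraded algebras associated to the multiplicative filtrations on $H^*(\tot(\Hom_{kG}(P_*,K^*)))$ and $H^*(\tot(\Hom_{kG}(P_*,{K'}^*)))$, this gives exactly the desired isomorphism of associated bigraded algebras.

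The main obstacle I anticipate is the bookkeeping that the product of Equation~\eqref{equ:productforHom} genuinely endows $\tot(C)$ with a differential graded algebra structure whose product is filtered multiplicatively and which induces on $E_1$ the product built from $\cup_K$ (this is the content invoked via the criterion \cite[XII.10.4]{SMacLane63}). Care is needed to confirm that $\Phi_1$ on $E_1^{n,m}=\Hom_{kG}(P_n,H^m(K^*))$ is post-composition with $H^*(\varphi)$ as an algebra map, so that hypothesis (3) --- rather than a strict cochain-level multiplicativity of $\varphi$ --- suffices. Once the $E_1$-page is handled, the passage to $E_\infty$ is formal, since morphisms of multiplicative spectral sequences that are isomorphisms on one page remain multiplicative isomorphisms on all subsequent pages.
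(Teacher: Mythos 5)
Your proposal is correct and follows essentially the same route as the paper: equip $\tot(C)$ and $\tot(C')$ with the product of Equation~\eqref{equ:productforHom}, observe that the column filtration is multiplicative so that $E$ and $E'$ are spectral sequences of algebras whose $E_\infty$-terms are the associated bigraded algebras of the induced filtrations on $H^*(\tot(C))$ and $H^*(\tot(C'))$, note that $\Phi_1$ is post-composition with $H^*(\varphi)$ and hence multiplicative by hypothesis (3), and propagate multiplicativity through all pages using the isomorphisms from Lemma~\ref{lemma:homalg}. One caution: your side remark that $\varphi_*(f_1\cup f_2)$ and $\varphi_*(f_1)\cup'\varphi_*(f_2)$ agree \emph{on the nose} is unavailable here, since $\varphi$ is not assumed to be multiplicative at the cochain level --- hypothesis (3) concerns only $H^*(\varphi)$, and in the paper's applications (e.g.\ the zig-zags through $U(p,d)$ and in Conjecture~\ref{conj:detailed}) the cochain maps explicitly do not commute with the products; your fallback branch, arguing multiplicativity only from the $E_1$-page onward, is exactly the paper's argument and is the one you must use.
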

\begin{proof}
Consider the double complexes $C_{n,m}=\Hom_{kG}(B_n(G;k),K^m)$ and $C'_{n,m}=\Hom_{kG}(B_n(G;K),{K'}^m)$. By \eqref{equ:productforHom}, there are products $\cup_C$ and $\cup_{C'}$ on $\tot(C)$ and $\tot(C')$ that induce products in $H^*(\tot(C))$ and $H^*(\tot(C'))$ respectively. The products $\cup_C$ and $\cup_{C'}$ are defined in such a way that the filtrations by columns preserve them. Hence, the associated spectral sequences are spectral sequences of algebras and the bigraded algebra structures on $E_\infty$ and $E'_\infty$ arise from the the induced filtrations of $H^*(\tot(C))$ and $H^*(\tot(C'))$ respectively. The morphism $\Phi_1\colon E_1\to E'_1$ is given by post-composition by $H^*(\varphi)$, and hence it preserves the bigraded algebra structures on $E_1$ and $E'_1$ because of hypothesis ($3$). From here, it is easy to see that $\Phi_r\colon E_r\to E'_r$ preserve the bigraded algebra structures for all $r\geq 1$, in particular, for $r=\infty$. This proves the claim.
\end{proof}

\begin{Rmk}
Note that we do not claim that the two graded $k$-algebras 
\[
H^*(\tot(\Hom_{kG}(P_*,K^*)))\text{ and }H^*(\tot(\Hom_{kG}(P_*,{K'}^*)))
\]
are isomorphic.
\end{Rmk}

\section{Preliminaries: Uniserial $p$-adic space groups}
\label{section:uniserialpadicspacegroups}
In this section, we state general facts about uniserial $p$-adic space groups (see \cite{LeedGreenBook}, \cite{McKay94} or \cite{Eick2005}), about powerful $p$-central groups with the $\Omega$-extension property (see \cite{Weigel}), about twisted abelian $p$-groups, and about constructible groups and the Leedham-Green classification of finite $p$-groups of fixed coclass (see \cite{LeedGreen}).

\subsection{Uniserial $p$-adic space groups and constructible groups}
\label{subsection:uniserialpadicspacegroupsandconstructible}
Let $R$ be a uniserial $p$-adic space group with translation group $T$ and point group $P$. Then $R$ is a $p$-adic pro-$p$ group that fits in the extension of groups
\begin{equation}\label{equ:TRP}
1 \to T \to R \to P \to 1,
\end{equation}
and $T$ is the maximal normal abelian pro-$p$ subgroup of $R$.  The translation group is a $\Z_p$-lattice of rank $d_x=p^{x-1}(p-1)$ and $P$ is a finite $p$-group which acts faithfully and uniserially on $T$. From the former condition, we have that $P \leq \GL_{d_x}(\Z_p)$. From the latter condition, the  group $R$ has finite coclass $c$ with $x\leq c$ and there exists a unique series of $P$-invariant lattices, the \emph{uniserial filtration}. More precisely, for each integer $i\geq 0$, there exists a unique $P$-invariant sublattice $T_i$ of $T$ that satisfies $|T:T_i|=p^i$. Moreover, for $i=j+sd_x$ with $s\geq 0$, we have $T_i=p^sT_j$. For $\tilde{T}:= T \otimes_{\Z_p} \Q_p$, we obtain a split extension of $\tilde{T}$ by the point group $P$. By Lemma 10.4.3 in \cite{LeedGreenBook}, there exists a minimal superlattice $T_0$ of $T$ in $\tilde{T}$ such that the subgroup $R_0$ generated by $T_0$ and $P$ splits over $T_0$:
\begin{equation}\label{equ:T0R0P}
1 \to T_0 \to R_0 \to P \to 1.
\end{equation}
Notice that the index of $T$ in $T_0$ is finite, and therefore, the index of $R$ in $R_0$ is also finite. The following result of Leedham-Green describes the structure of finite $p$-groups of fixed coclass.

\begin{Thm}[{Leedham-Green \cite[Theorem 7.6, Theorem 7.7]{LeedGreen}}]
\label{thm:quotientofcoclasscareconstructible}
For some function $f(p,c)$, almost every $p$-group $G$ of coclass $c$ has a normal subgroup $N$ of order at most $f(p,c)$ such that $G/N$ is constructible. 
\end{Thm}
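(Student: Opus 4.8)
The plan is to deduce this from the structural backbone of coclass theory, namely the finiteness of infinite pro-$p$ groups of fixed coclass together with the organisation of finite $p$-groups into finitely many coclass trees. First I would recall that a pro-$p$ group of finite coclass $c$ is soluble and, in fact, virtually abelian: it contains a $\Z_p$-lattice $T$ as an open normal subgroup with finite $p$-group quotient $P$ acting faithfully and uniserially, i.e. it is a uniserial $p$-adic space group $R$ as in \eqref{equ:TRP}. The key qualitative input is that, up to isomorphism, there are only finitely many infinite pro-$p$ groups of coclass $c$; this is the deepest of the coclass theorems and I regard it as the main obstacle (see below). Granting it, each such $R$ comes equipped with its uniserial filtration $\{T_i\}_{i\ge0}$, and the finite quotients $R/T_i$ form the \emph{main line} of a coclass tree.

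Second, I would set up the coclass graph: the vertices are the isomorphism types of finite $p$-groups of coclass $c$, with an edge $G\to G/Z$ whenever $Z$ is a central subgroup of order $p$ exhibiting $G$ as an immediate descendant of $G/Z$. This graph decomposes into finitely many trees, one hanging off each infinite pro-$p$ group of coclass $c$, together with finitely many sporadic vertices lying on no infinite main line. The content of the theorem is then that, apart from a piece of bounded order, a group on a given tree is a quotient of the associated space group by a term of its uniserial filtration.

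Third comes the explicit identification. For $G$ lying deep enough on the tree attached to $R$, I would produce a normal subgroup $N\unlhd G$ contained in the image of the translation lattice, arising from a term $p^sT_j$ of the uniserial filtration, such that $G/N\cong R/U$ (or its twisted analogue) for a $P$-invariant lattice $U<T$; by definition this is a constructible group. The periodicity of the tree, namely that $T_{j+sd_x}=p^sT_j$ with $d_x=p^{x-1}(p-1)$, is precisely what allows a single bound $f(p,c)$ on $|N|$ to work uniformly along the entire main line, independently of how far down $G$ sits. The finitely many sporadic groups and the bounded initial segment of each of the finitely many trees are then absorbed into the ``almost every'' clause and into the bound $f(p,c)$.

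The main obstacle is the finiteness of infinite pro-$p$ groups of coclass $c$ and the attendant solubility and space-group structure: this is where the genuine difficulty of the coclass theorems resides, requiring the analysis of $\Z_pP$-lattices, the cohomological control of the possible extensions \eqref{equ:TRP}, and the uniserial action of $P$ on $T$. Once that structural theory is in place, the passage to finite quotients and the extraction of the bounded normal subgroup $N$ are comparatively formal, being governed entirely by the periodicity of the uniserial filtration.
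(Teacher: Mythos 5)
This statement is not proved in the paper at all: it is imported verbatim from Leedham-Green \cite{LeedGreen} (Theorems 7.6 and 7.7), so the only meaningful comparison is with Leedham-Green's actual argument, and against that your sketch has a genuine gap where you declare the work ``comparatively formal.'' The concrete problem is your third step. First, you propose to extract the bounded normal subgroup $N$ from the periodicity of the coclass tree, but tree periodicity (du Sautoy; Eick--Leedham-Green) is logically \emph{downstream} of the structure theorem you are trying to prove -- its known proofs rest on the constructible/space-group description of the groups -- so taking it as input is circular. And even granted periodicity of the graph, an isomorphism between branches is a statement about isomorphism types repeating; it does not manufacture a normal subgroup $N\unlhd G$ of order $\leq f(p,c)$ inside a given $G$, nor identify $G/N$ with a constructible group. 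Structure does not follow from the shape of the graph.

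Second, your underlying picture -- that apart from a bounded piece every group on a tree is a quotient of $R$ by a term of the uniserial filtration -- is false for $p\geq 5$: branches of coclass trees then have unbounded depth, so groups of coclass $c$ can be arbitrarily far from any main-line quotient $R/T_i$. This is precisely why Leedham-Green needs the \emph{twisted} constructible groups $G_\gamma$, which are Baer sums built from $\gamma\in\Hom_P(\Lambda^2(T_0/V),V/U)$ (Definition \ref{Defi:constructiblegroup}) and are not quotients of $R$ at all when $\gamma\neq 0$ (Remark \ref{rmk:nontwistedcase}); your parenthetical ``(or its twisted analogue)'' conceals the heart of the proof. The actual route is: produce in $G$ a normal subgroup of nilpotency class at most $2$ and bounded index (coclass Theorem A), analyse the associated $\Z_p P$-lattice and central extension via Lie-theoretic/Lazard-type methods, and show that after deleting a normal subgroup of bounded order the extension is classified by such a $\gamma$ over a uniserial $p$-adic space group. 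Your first two steps (uniserial space-group structure of infinite pro-$p$ groups of coclass $c$, finiteness of their isomorphism types, and uniseriality forcing finite-index $P$-invariant sublattices to be the $T_i$) are accurate inputs, but the passage from them to ``$G/N$ constructible with $|N|\leq f(p,c)$'' is the theorem itself, not a formal consequence of filtration periodicity.
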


A constructible group arises from the following data: A uniserial $p$-adic space group $R$ with translation group $T$ and point group $P$, two $P$-invariant sublattices of $T$, $U<V$, and $\gamma \in \Hom_P(\Lambda^2(T_0/V),V/U)$. In the next paragraphs, we review the definition of the \emph{constructible group} $G_\gamma$ associated to $R$, $U$, $V$ and $\gamma$ by Leedham-Green \cite[p. 60]{LeedGreen}. At the same time, we consider a related group $G_{\gamma,0}$ that we term \emph{split constructible group}.

\begin{Rmk}
For $p=2$, we assume, following Leedham-Green, that $\gamma=0$, i.e., there is no twist in this case.
\end{Rmk}
%
%

Consider the group $X$ with underlying set $V/U\times T_0/V$ and operation 
\[
(a_1,b_1)(a_2,b_2)=(a_1+a_2+\frac{1}{2}\gamma(b_1,b_2),b_1+b_2).
\]
Then $P$ acts on $X$ coordinate-wise and we may consider the group $X \rtimes P$ and the following extension
\begin{equation}\label{equ:XsdPextension}
1\to V/U \to X \rtimes P  \to T_0/V\rtimes P=R_0/V\to 1.
\end{equation}
Pulling back along the inclusion $R/V\leq R_0/V$, we get another extension $Y$:
\begin{equation}\label{equ:Yextension}
1\to V/U \to Y  \to R/V\to 1.
\end{equation}

On the other hand, there are extension of groups
\begin{equation}
1\to V/U \to R_0/U  \to R_0/V \to 1\label{equ:R0/Usplitextension}
\end{equation}
and
\begin{equation}
1\to V/U \to R/U  \to R/V \to 1.\label{equ:R/Usplitextension}
\end{equation}

\begin{Defi}\label{Defi:constructiblegroup}
The \emph{constructible group} associated to $R$, $U$, $V$ and $\gamma$ is the group $G_{\gamma}$ obtained as the Baer sum of the extensions \eqref{equ:Yextension} and \eqref{equ:R/Usplitextension}.
\end{Defi}

\begin{Defi}\label{Defi:splitconstructiblegroup}
The \emph{split constructible group} associated to $R$, $U$, $V$ and $\gamma$ is the group $G_{\gamma,0}$ obtained as the Baer sum of the extensions \eqref{equ:XsdPextension} and \eqref{equ:R0/Usplitextension}.
\end{Defi}

\begin{Rmk}\label{rmk:nontwistedcase}
We say that $G_\gamma$ and $G_{\gamma_,0}$ are \emph{twisted} or \emph{non-twisted} according to the conditions $\gamma\neq 0$ or $\gamma=0$ respectively. In the non-twisted case, the group $X$ is the direct product $V/U \times T_0/V$, $G_\gamma=R/U$, $G_{\gamma,0}=R_0/U$.
\end{Rmk}

%



\subsection{Twisted abelian $p$-groups.}
\label{subsection:twistedabelianpgroups} In this subsection, we describe certain $p$-groups that are obtained by twisting the sum operation in an abelian $p$-group. They play a central role in  the description of Leedham-Green constructible groups. 

\begin{Defi}\label{defi:HomAABHom_PAAB}
Let $A$ and $B$ be abelian $p$-groups. We denote by $\Hom(\Lambda^2 A,B)$ the set of maps $\lambda: A \times A \to B$ which are biadditive and alternating.
If, in addition, $P$ is a $p$-group that acts on $A$ and $B$, then we denote by $\Hom_P(\Lambda^2 A,B)$ the subset of $\Hom(\Lambda^2 A,B)$ consisting of the maps $\lambda$ that satisfy:
\[
p\cdot \lambda(a,a')=\lambda(p\cdot a,p\cdot a')\text{ for all $a,a'\in A$, all $p\in P$.}
\]
\end{Defi}

The statements in the following definitions are easy to check.

\begin{Defi}\label{defi:Alambda}
Let $A=(A,+)$ be an abelian $p$-group for an odd prime $p$ and let $\lambda \in \Hom(\Lambda^2 A,A)$. The set $A$ endowed with the operation
$$
a+_{\lambda} a' = a+a'+\tfrac{1}{2} \lambda(a,a')\text{, for $a,a' \in A$,}
$$
defines a group structure that we denote by $A_{\lambda}=(A, +_{\lambda})$. 
\end{Defi}

\begin{Rmk}\label{rmk:alwaysclasstwo}
Throughout the paper we assume that the map $\lambda$ used to define $A_\lambda$ (Definition \ref{defi:Alambda}) satisfy
\[
\im(\lambda)\leq \rad(\lambda)=\{a\in A|\lambda(a,a')=0\text{ for all $a'\in A$}\}.
\]
This ensures that $A_\lambda$ has nilpotency class at most two, see Lemma \ref{propertiesAlambda}\eqref{propertiesAlambdaclasstwo}.
\end{Rmk}

\begin{Defi}\label{defi:AlambdasdP}
With the notations of Definition \ref{defi:Alambda} and if we further assume that $P$ is a $p$-group that acts on $A$ and that $\lambda \in \Hom_P(\Lambda^2 A,A)$, then $P$ acts on $A_\lambda$ and hence we may consider the semi-direct product $A_\lambda\rtimes P$.
\end{Defi}

The group $A_\lambda$ has the following properties:
\begin{Lem} \label{propertiesAlambda}
With the notations of Definition \ref{defi:Alambda}, the following properties hold:
\begin{enumerate}[(i)]
\item \label{propertiesAlambdasamepowers}
For all $a\in A$ and all $n\in \Z$ we have
$
\overbrace{a+_\lambda+a+_\lambda\ldots+_\lambda a}^\text{$n$ times}=\overbrace{a+a+\ldots+a}^\text{$n$ times}
$.
\item \label{propertiesAlambdasameOmega1}
$\Omega_1(A)=\Omega_1(A_\lambda)$.
\item \label{propertiesAlambdaclasstwo}
$A_\lambda$ is abelian or has nilpotency class two.
\item \label{propertiesAlambdaisoonsubgroup}
If $R$ is a subset of $\rad(\lambda)$ then $R$ is a subgroup of $A$ if and only if it is a subgroup of $A_\lambda$. Moreover, in that case, the identity map $(R, +) \to (R, +_{\lambda})$ is an isomorphism. 
\item \label{propertiesAlambdaisoonquotient}
If $R$ is a subgroup satisfying $\im(\lambda)\subseteq R\subseteq \rad(\lambda)$ then $R$ is central in both $A$ and $A_\lambda$ and the identity map  $(A/R, +) \to (A_\lambda/R,+_{\lambda})$ is an isomorphism.
\end{enumerate}
\end{Lem}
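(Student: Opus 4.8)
The plan is to reduce all five parts to two elementary consequences of the standing hypotheses on $\lambda$. First, since $\lambda$ is alternating we have $\lambda(a,a)=0$ for every $a$; second, the assumption $\im(\lambda)\subseteq\rad(\lambda)$ forces every nested evaluation of $\lambda$ to vanish, i.e.\ $\lambda(\lambda(a,b),c)=0=\lambda(c,\lambda(a,b))$. A direct induction then establishes a normal form for an iterated $+_\lambda$-sum: for $x_1,\dots,x_k\in A$ one has $x_1+_\lambda\cdots+_\lambda x_k=\sum_i x_i+\tfrac12\sum_{i<j}\lambda(x_i,x_j)$, the nested $\lambda$-terms disappearing by the second fact. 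Specialising to $x_1=\cdots=x_k=a$ and using $\lambda(a,a)=0$ gives part \eqref{propertiesAlambdasamepowers} for $n\geq 0$; the case $n<0$ follows because $-a$ is the $+_\lambda$-inverse of $a$ (again by $\lambda(a,a)=0$).

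For part \eqref{propertiesAlambdasameOmega1}, part \eqref{propertiesAlambdasamepowers} shows that an element has the same additive order in $A$ and in $A_\lambda$, so the underlying set $S=\{a:pa=0\}$ of elements of order dividing $p$ is literally the same in both groups. I would then check that $S$ is closed under $+_\lambda$: for $a,b\in S$ the twist $\tfrac12\lambda(a,b)$ again lies in $S$, since $p\lambda(a,b)=\lambda(pa,b)=0$, whence $p(a+_\lambda b)=0$. Thus $S$ is a subgroup of both $A$ and $A_\lambda$, and as $\Omega_1$ is generated by $S$ in either group, $\Omega_1(A)=S=\Omega_1(A_\lambda)$.

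Part \eqref{propertiesAlambdaclasstwo} comes from computing the group commutator in $A_\lambda$. Applying the normal form to $(-a)+_\lambda(-b)+_\lambda a+_\lambda b$ and collecting the six $\lambda$-terms gives $[a,b]_\lambda=\lambda(a,b)$. Hence the commutator subgroup of $A_\lambda$ is generated by $\im(\lambda)$; since $\im(\lambda)\subseteq\rad(\lambda)$, every such commutator is central (a further commutator evaluates to $\lambda(\lambda(a,b),c)=0$), so $A_\lambda$ has nilpotency class at most two, and is abelian precisely when $\lambda=0$.

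The key remark for parts \eqref{propertiesAlambdaisoonsubgroup} and \eqref{propertiesAlambdaisoonquotient} is that $+$ and $+_\lambda$ coincide on $\rad(\lambda)$, since $\lambda(a,b)=0$ there. Part \eqref{propertiesAlambdaisoonsubgroup} is then immediate: for $R\subseteq\rad(\lambda)$ the two operations agree on $R$, so $R$ is a subgroup under one iff under the other, and the identity is an isomorphism. For part \eqref{propertiesAlambdaisoonquotient}, with $\im(\lambda)\subseteq R\subseteq\rad(\lambda)$, a one-line computation shows each $r\in R$ is central in $A_\lambda$ (both $r+_\lambda a$ and $a+_\lambda r$ equal $a+r$), and in the quotient the twist $\tfrac12\lambda(a,b)\in\im(\lambda)\subseteq R$ is killed, so $(a+R)+_\lambda(b+R)=(a+b)+R$ and the identity induces an isomorphism $(A/R,+)\to(A_\lambda/R,+_\lambda)$. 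Conceptually none of these steps is deep; the only point requiring genuine care---and the likely source of error---is the sign and index bookkeeping in the commutator expansion of part \eqref{propertiesAlambdaclasstwo}, together with the repeated use that $2$ is invertible for $p$ odd, which is what legitimises the factor $\tfrac12$ throughout.
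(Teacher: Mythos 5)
Your proof is correct and takes essentially the same route as the paper's (very terse) argument, which rests on exactly the facts you verify in detail: $\lambda(a,a)=0$ and the standing hypothesis $\im(\lambda)\subseteq\rad(\lambda)$ for (i) and (iii), the implication $pa=pb=0\Rightarrow p(a+_\lambda b)=0$ for (ii), and $\lambda(a,A)=0$ on $R\subseteq\rad(\lambda)$ for (iv) and (v). Your normal form for iterated $+_\lambda$-sums and the commutator identity $[a,b]_\lambda=\lambda(a,b)$ (which the paper also uses later, in Lemma \ref{additionalpropertiesAlambda}) are correct fillings-in of what the paper leaves implicit.
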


\begin{proof} 
Points \eqref{propertiesAlambdasamepowers} and \eqref{propertiesAlambdaclasstwo} follow from the fact that $\lambda(a,a)=0$ for all $a\in A$ and from the condition $\im(\lambda)\subseteq \rad(\lambda)$ in Remark \ref{rmk:alwaysclasstwo} respectively. Then \eqref{propertiesAlambdasameOmega1} is a consequence of \eqref{propertiesAlambdasamepowers} and the fact that $pa=pb=0\Rightarrow p(a+_\lambda b)=0$.  The fourth item \eqref{propertiesAlambdaisoonsubgroup} is a consequence of the fact that $\lambda(a,A)=0$ for all $a\in R$. Point \eqref{propertiesAlambdaisoonquotient} is also straightforward.
\end{proof}

In the next result, we give an alternative description of the group $G_{\gamma,0}$ (see Definition \ref{Defi:splitconstructiblegroup}) in terms of a twisted abelian $p$-group. We refer the reader to the notation of Subsection \ref{subsection:uniserialpadicspacegroupsandconstructible}. We denote by $\lambda$ the element in $\Hom_P(\Lambda^2(T_0/U),T_0/U)$ obtained from $\gamma$ precomposing it with the projection $T_0/U\twoheadrightarrow T_0/V$ and postcomposing it with the inclusion $V/U\hookrightarrow T_0/U$. Then we consider the $p$-group $(T_0/U)_{\lambda}=(T_0/U, +_{\lambda})$ (Definition \ref{defi:Alambda}), and the semi-direct product $(T_0/U, +_{\lambda})\rtimes P$ (Definition \ref{defi:AlambdasdP}).
\begin{Lem}\label{lemma:GalphaGalpha0}
The constructible group $G_\gamma$ and the split constructible group $G_{\gamma,0}$ satisfy the following properties:
\begin{enumerate}
\item $G_\gamma$ is a subgroup of $G_{\gamma,0}$ of index $|T_0:T|$.
\label{lemma:GalphaGalpha0subgroupindex}
\item There is an isomorphism $G_{\gamma,0}\cong (T_0/U, +_{\lambda})\rtimes P$.
\label{lemma:GalphaGalpha0split}
\end{enumerate}
\end{Lem}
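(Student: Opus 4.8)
The plan is to prove the two assertions in turn, unwinding the Baer-sum constructions of Definitions \ref{Defi:constructiblegroup} and \ref{Defi:splitconstructiblegroup} and then identifying the split constructible group with an explicit twisted semidirect product. For part \eqref{lemma:GalphaGalpha0subgroupindex}, recall that $G_{\gamma}$ is the Baer sum of the extensions \eqref{equ:Yextension} and \eqref{equ:R/Usplitextension}, while $G_{\gamma,0}$ is the Baer sum of \eqref{equ:XsdPextension} and \eqref{equ:R0/Usplitextension}. The key observation is that the extension $Y$ of \eqref{equ:Yextension} was defined precisely as the pullback of \eqref{equ:XsdPextension} along the inclusion $R/V\hookrightarrow R_0/V$, and similarly \eqref{equ:R/Usplitextension} is the pullback of \eqref{equ:R0/Usplitextension} along the same inclusion. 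Since Baer sum commutes with pullback along a fixed homomorphism of the quotient groups, $G_\gamma$ is the pullback of the extension defining $G_{\gamma,0}$ along $R/V\hookrightarrow R_0/V$. First I would make this compatibility precise, and then read off that $G_\gamma$ sits inside $G_{\gamma,0}$ as the preimage of $R/V$, so that the index equals $[R_0/V : R/V]=[R_0:R]$. Finally, since $T$ has finite index $|T_0:T|$ in $T_0$ and $R,R_0$ differ only by this translation part (the point group $P$ being common), one has $[R_0:R]=|T_0:T|$, giving the stated index.

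For part \eqref{lemma:GalphaGalpha0split}, the goal is to show $G_{\gamma,0}\cong (T_0/U,+_\lambda)\rtimes P$. The idea is to analyze the underlying set and multiplication of the Baer sum of \eqref{equ:XsdPextension} and \eqref{equ:R0/Usplitextension}. Both extensions have kernel $V/U$ and common quotient $R_0/V=T_0/V\rtimes P$, and the Baer sum is formed on the diagonal quotient $R_0/V$ with kernel $V/U$. The first extension is governed by the group $X$ with its $\tfrac12\gamma$-twisted operation, and the second, \eqref{equ:R0/Usplitextension}, is the untwisted extension $R_0/U$. Concretely I would write a general element of the Baer sum as a pair $((a,b),p)$ with $a\in V/U$, $b\in T_0/V$, $p\in P$, subject to the Baer-sum identification, and compute the product using the operation of $X$ from Definition \ref{Defi:constructiblegroup}. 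The crucial point is that the $\tfrac12\gamma(b_1,b_2)$ correction term in $X$, after transporting $\gamma$ to $\lambda\in\Hom_P(\Lambda^2(T_0/U),T_0/U)$ via the projection $T_0/U\twoheadrightarrow T_0/V$ followed by $V/U\hookrightarrow T_0/U$, becomes exactly the $\tfrac12\lambda$ appearing in Definition \ref{defi:Alambda}. Thus the $V/U\times T_0/V$ part of $G_{\gamma,0}$, as a set $T_0/U$ with twisted multiplication, is identified with $(T_0/U,+_\lambda)$, and the $P$-factor splits off as a semidirect complement because \eqref{equ:R0/Usplitextension} is split and the action of $P$ is coordinate-wise.

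The main technical obstacle will be keeping the $\tfrac12$-factors, the Baer-sum bookkeeping, and the compatibility of the $P$-action straight across the two constructions simultaneously. In particular I would need to verify that $\lambda$ as defined really satisfies the hypothesis $\im(\lambda)\subseteq\rad(\lambda)$ of Remark \ref{rmk:alwaysclasstwo}, which follows because $\im(\lambda)\subseteq V/U$ and $V/U$ is annihilated by $\lambda$ (as $\lambda$ factors through the projection to $T_0/V$ and $V/U$ maps to $0$ there); this also confirms $(T_0/U)_\lambda$ is a well-defined class-two group. I would also check that the identification respects the $P$-action, using $\gamma\in\Hom_P(\Lambda^2(T_0/V),V/U)$ and hence $\lambda\in\Hom_P(\Lambda^2(T_0/U),T_0/U)$, so that $P$ acts on $(T_0/U,+_\lambda)$ as in Definition \ref{defi:AlambdasdP} and the semidirect product is legitimate. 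Once the multiplication on the Baer sum is shown to coincide with that of $(T_0/U,+_\lambda)\rtimes P$ on the nose, the isomorphism is the identity on underlying sets, and the proof concludes.
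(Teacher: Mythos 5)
Your proposal follows essentially the same route as the paper: part \eqref{lemma:GalphaGalpha0subgroupindex} via the observation that the extensions \eqref{equ:Yextension} and \eqref{equ:R/Usplitextension} are pullbacks (sub-extensions) of \eqref{equ:XsdPextension} and \eqref{equ:R0/Usplitextension} along $R/V\hookrightarrow R_0/V$, compatibly with Baer sums, together with $|R_0:R|=|T_0:T|$; and part \eqref{lemma:GalphaGalpha0split} by choosing a set-theoretic section $s\colon T_0/V\to T_0/U$, unwinding the Baer-sum multiplication, and matching it with that of $(T_0/U,+_\lambda)\rtimes P$. The ``Baer-sum bookkeeping'' you flag as the main obstacle is exactly the paper's Equation \eqref{equ:extensionclassesforinvariantsection}, relating the cocycle of $V/U\to R_0/U\to R_0/V$ (for the section $s\times 1$) to the cocycle $\theta$ of the abelian extension plus the failure of $P$-equivariance of $s$, which also accounts for the $P$-action in the chosen coordinates; carrying that identity through completes your plan as written.
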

\begin{proof}
The first point is a consequence of that the short exact sequences \eqref{equ:Yextension} and \eqref{equ:R/Usplitextension} are sub-short exact sequences of \eqref{equ:XsdPextension} and \eqref{equ:R0/Usplitextension} respectively. The index is as indicated because $|R_0:R|=|T_0:T|$.

Before proving the second point of the statement, consider the following diagram, where the upper row is the extension \eqref{equ:R0/Usplitextension}:
\[
\xymatrix{
1\ar[r]&V/U\ar[r]&R_0/U=T_0/U\rtimes P\ar[r]&R_0/V=T_0/V\rtimes P\ar[r]&1\\
1\ar[r]&V/U\ar@{=}[u]\ar[r]&T_0/U\ar[u]\ar[r]&T_0/V\ar[u]\ar[r]&1.
}
\]
Choose a (set-theoretical) section $s\colon T_0/V\to T_0/U$ satisfying $s(1)=1$.  Then $s\times 1\colon R_0/V\to R_0/U$ is also a section. We define the corresponding cocycles $\theta\in C^2(T_0/V;V/U)$ and $\eta\in C^2(R_0/V;V/U)$ by
\begin{align*}
\theta(x_1,x_2)&=s(x_1)+s(x_2)-s(x_1+x_2)\text{ and }\\
\eta(x_1,p_1,x_2,p_2)&=(s\times 1)(x_1,p_1)+(s\times 1)(x_2,p_2)-(s\times 1)((x_1,p_1)+(x_2,p_2))\\
&=(s(x_1)+{}^{p_1}s(x_2)-s(x_1+{}^{p_1}x_2),1).
\end{align*}
A short calculation shows that they satisfy
\begin{equation}\label{equ:extensionclassesforinvariantsection}
\eta(x_1,p_1,x_2,p_2)=\theta(x_1,{}^{p_1}x_2)+{}^{p_1}s(x_2)-s({}^{p_1}x_2),
\end{equation}
where $(x_i,p_i)\in R_0/V=T_0/V\rtimes P$. Now consider the group $X \rtimes P$ appearing in the extension \eqref{equ:XsdPextension}. It has underlying set $V/U \times T_0/V \times P$ and operation given by
\[
(z_1,x_1,p_1)(z_2,x_2,p_2)=(z_1+{}^{p_1}z_2+\tfrac{1}{2}\gamma(x_1,{}^{p_1}x_2), x_1+{}^{p_1}x_2, p_1p_2),
\]
where $(z_i,x_i,p_i)\in V/U \times T_0/V \times P$. Now, choose the section $0\times 1$ for the extension \eqref{equ:XsdPextension} and define the corresponding cocycle by
\[
(0\times 1)(x_1,p_1)+(0\times 1)(x_2,p_2)-(0\times 1)((x_1,p_1)+(x_2,p_2))=(\tfrac{1}{2}\gamma(x_1,{}^{p_1}x_2),0,1),
\]
where $(x_i,p_i)\in T_0/V\rtimes P=R_0/V$. As the Baer sum correspond to adding extension cocycles, the group $G_{\gamma,0}$ has operation
\[
(z_1,x_1,p_1)(z_2,x_2,p_2)=(z_1+{}^{p_1}z_2+\tfrac{1}{2}\gamma(x_1,{}^{p_1}x_2)+\eta(x_1,p_1,x_2,p_2),x_1+{}^{p_1}x_2, p_1p_2).
\]
Fix the bijection $T_0/U\stackrel{\cong}\to V/U\times T_0/V$ given by $y\mapsto (y-s(\pi(y)),\pi(y))$.
Then the operation $+_{\lambda}$ in $T_0/U$ (Definition \ref{defi:Alambda}) is readily checked to be described as
\[
(z_1,x_1)+_\lambda(z_2,x_2)=(z_1+z_2+\tfrac{1}{2}\gamma(x_1,x_2)+\theta(x_1,x_2), x_1+x_2), 
\]
where $(z_i,x_i)\in  V/U\times T_0/V$. Moreover, the action of $p\in P$ on $(z,x)\in V/U\times T_0/V$ is given by 
\[
{}^{p}(z,x)=({}^{p}z+{}^{p}s(x)-s({}^{p}x),{}^{p}x).
\]
It follows that the product in $(T_0/U, +_{\lambda}) \rtimes P$ of $(z_i,x_i,p_i)\in V/U \times T_0/V \times P$ for $i=1,2$ is exactly
\[
(z_1+{}^{p_1}z_2+\tfrac{1}{2}\gamma(x_1,{}^{p_1}x_2)+\theta(x_1,{}^{p_1}x_2)+s({}^{p_1}x_2)-{}^{p_1}s(x_2), x_1+{}^{p_1}x_2, p_1p_2).
\]
Now the lemma follows from Equation \eqref{equ:extensionclassesforinvariantsection}.

\end{proof}


\begin{Rmk}
The above proof shows that, in fact,  the extensions 
\[
V/U \to (T_0/U, +_{\lambda}) \rtimes P \to T_0/V\rtimes P \; \text{and} \; V/U \to G_{\gamma,0} \to T_0/V\rtimes P
\]
are isomorphic.
\end{Rmk}



\subsection{Powerful $p$-central groups and the $\Omega$-extension property}
\label{subsection:pcentralpowerfulomegaextprop}
In this subsection, we state the basic definitions and properties of powerful $p$-central groups as discussed in \cite{Weigel}, as well as we apply them to the context of constructible groups.
\begin{Defi}\label{defi:pcentral}
Let $G$ be a finite group, let $p$ be an odd prime and let $L$ be a $\Z_p$-Lie algebra. Then we say that $G$ or $L$ is \emph{$p$-central} if its elements of order $p$ are contained in its centre:
$$
\Omega_1(G)=\langle g \in G | \; g^p=1\rangle  \subseteq Z(G) \;\; \text{and}\;\; \Omega_1(L)=\langle a\in L | \; pa=0\rangle\subseteq Z(L).
$$
\end{Defi}
\begin{Defi}\label{defi:powerful}
Let $p$ be an odd prime, let $G$ be a finite $p$-group and let $L$ be a $\Z_p$-Lie algebra. We say that $G$ is \emph{powerful} if $[G,G] \subseteq G^p$ and we say that $L$ is \emph{powerful} if $[L,L]\subseteq pL$. 
\end{Defi}
\begin{Defi}\label{defi:omegaEP}
Let $p$ be an odd prime, let $G$ be a $p$-central group and let $L$ be a $p$-central $\Z_p$-Lie algebra. We say that $G$ has the \emph{$\Omega$-extension property} ($\Omega$EP for short) if there exists a $p$-central group $H$ such that $G = H/\Omega_1(H)$. We say that $L$ has the $\Omega$EP if there exists a $p$-central Lie algebra $A$ such that $L=A/\Omega_1(A)$.
\end{Defi}

The following result of Weigel \cite[Theorem 2.1, Corollary 4.2]{Weigel} determines the cohomology ring of powerful, $p$-central $p$-groups with $\Omega$EP. It uses the following subquotient of the cohomology ring:

\begin{Defi}\label{defi:reducedcohomology}
For a finite group $G$, its \emph{reduced} $\bmod\; p$ cohomology ring, that we denote by $H^*(G;\F_p)_{\text{red}}$, is the quotient $H^*(G;\F_p)/nil(H^*(G;\F_p))$, where $nil(H^*(G;\F_p))$ is the ideal of all nilpotent elements in $H^*(G;\F_p)$.
\end{Defi}

%

\begin{Thm} \label{ThmWeigel}Let $p$ be an odd prime and let $G$ be a powerful $p$-central $p$-group with the $\Omega$EP and $d=\dim_{\F_p}(\Omega_1(G))$. Then:
\begin{enumerate}
\item $H^*(G;\F_p) \cong \Lambda[x_1, \dots, x_d] \otimes \F_p[y_1, \dots, y_d]
$ where $|x_i|=1$ and $|y_i|=2$.\label{ThmWeigelHG}
\item The reduced restriction mapping $j_{\red}\colon H^*(G;\F_p)_{\red} \to H^*(\Omega_1(G);\F_p)_{\red}$ is an isomorphism.\label{ThmWeigeljred}
\end{enumerate} 
\end{Thm}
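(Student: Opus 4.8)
The target ring $\Lambda[x_1,\dots,x_d]\otimes\F_p[y_1,\dots,y_d]$ is precisely the mod-$p$ cohomology of an elementary abelian $p$-group of rank $d$ (recall that for $p$ odd a degree-$1$ class automatically squares to zero, so the exterior part is forced). Since $G$ is $p$-central, the subgroup $E:=\Omega_1(G)$ is central, and being abelian and generated by elements of order $p$ it is elementary abelian of rank $d$; thus $G$ sits in a central extension
\[
1\to E\to G\to \bar G\to 1,\qquad E\cong(\Z/p)^d.
\]
The plan is to prove (1) by computing $H^*(G;\F_p)$ through the Lyndon--Hochschild--Serre spectral sequence of this extension, showing that it collapses to the asserted bigraded shape, and then to deduce (2) by tracking nilpotents under restriction to $E$.

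First I would produce the degree-$1$ generators. Because $G$ is powerful one has $\Phi(G)=G^p$, so $H^1(G;\F_p)=\Hom(G/G^p,\F_p)$, and the powerful and $p$-central hypotheses together force this space to have dimension $d$ (the $p$-power map sets up a perfect pairing between $G/\Phi(G)$ and $\Omega_1(G)$). This pins down $x_1,\dots,x_d$ and, by graded-commutativity, the exterior subalgebra they generate. The crux is then to produce $d$ polynomial generators $y_i$ in degree $2$ and to show $H^*(G;\F_p)$ is a free module over $\F_p[y_1,\dots,y_d]$ with the exterior monomials as a basis. Here the $\Omega$-extension property enters decisively: the $p$-central lift $H$ with $G=H/\Omega_1(H)$ is exactly the datum that forces the transgression in the spectral sequence to be injective on the row $H^1(E;\F_p)$, equivalently that the appropriate higher Bockstein $\beta^{(n)}$ carries each $x_i$ to a class $y_i$ restricting to $\eta_i$ on $E$, with the $y_i$ algebraically independent. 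Granting this non-degeneration, the spectral sequence collapses after the transgression differential and (1) follows; comparing Poincar\'e series $\bigl((1+t)/(1-t^2)\bigr)^d$ on both sides confirms that no further classes can appear.

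For (2), I would use that in $\Lambda[x_1,\dots,x_d]\otimes\F_p[y_1,\dots,y_d]$ the nilradical is the ideal generated by the exterior classes $x_i$, with reduced quotient the polynomial ring $\F_p[y_1,\dots,y_d]$; the same holds for $E$. Restriction to the central elementary abelian $E$ sends $x_i\mapsto\xi_i$ and $y_i\mapsto\eta_i$, so on reduced rings it becomes the map $\F_p[y_1,\dots,y_d]\to\F_p[\eta_1,\dots,\eta_d]$, $y_i\mapsto\eta_i$, which is an isomorphism. This is exactly $j_{\red}\colon H^*(G;\F_p)_{\red}\xrightarrow{\ \cong\ }H^*(E;\F_p)_{\red}$.

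The hard part is the non-degeneration claim of the second paragraph: controlling the Lyndon--Hochschild--Serre (equivalently Bockstein) differentials so that the transgression is injective and nothing survives past the degree-$2$ polynomial generators. This is where powerfulness, $p$-centrality and above all the $\Omega$-extension property must be used simultaneously; the $\Omega$EP is precisely the hypothesis preventing the higher Bocksteins from degenerating prematurely, and thereby guaranteeing the clean ``$d$ exterior plus $d$ polynomial'' shape rather than a ring with extra generators or relations.
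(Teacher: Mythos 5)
Your proposal is not a proof but an outline whose pivotal step is explicitly deferred, and it is worth noting first that the paper itself does not prove this statement: it is imported verbatim from Weigel \cite[Theorem 2.1, Corollary 4.2]{Weigel}, so the benchmark here is a citation, and a blind proof would in effect have to reprove Weigel's theorem. Your text does not do that. The assertion that the $\Omega$EP ``forces the transgression to be injective on $H^1(E;\F_p)$'' with algebraically independent, polynomial images $y_1,\dots,y_d$, and that the Lyndon--Hochschild--Serre spectral sequence then collapses to the stated $E_\infty$-term, is precisely the mathematical content of the theorem; you acknowledge it as ``the hard part'' and supply no argument for it, so the proposal assumes what is to be proved. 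There is also a structural mismatch in how you try to bring the $\Omega$EP to bear: the hypothesis produces a $p$-central group $H$ with $G=H/\Omega_1(H)$, i.e., it realizes $G$ as a \emph{quotient}, which feeds naturally into the spectral sequence of $1\to\Omega_1(H)\to H\to G\to 1$ (with $H^*(G;\F_p)$ on the base), not into the extension $1\to\Omega_1(G)\to G\to \bar G\to 1$ that you analyze; nor is an induction along your extension set up, since $\bar G=G/\Omega_1(G)$ is not shown to inherit powerfulness, $p$-centrality, or the $\Omega$EP.

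Two further inaccuracies, minor but symptomatic: restriction does not in general send $x_i\mapsto\xi_i$ --- already for the homocyclic group $G=(C_{p^2})^d$ one has $\Omega_1(G)\leq\Phi(G)$, so $H^1(G;\F_p)\to H^1(\Omega_1(G);\F_p)$ is zero; this is harmless for part (2) because degree-one classes are nilpotent, but the correct claim is only that the $y_i$ restrict onto polynomial generators of $H^*(\Omega_1(G);\F_p)_{\red}$, and that again requires the unproved collapse. Likewise, the ``perfect pairing'' between $G/\Phi(G)$ and $\Omega_1(G)$ via $p$-th powers is not formal: the equality $|G:G^p|=|\Omega_1(G)|$ for powerful $p$-groups ($p$ odd) is itself a nontrivial theorem, and your Poincar\'e-series check is circular unless the degeneration is already established. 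In short: the skeleton (central elementary abelian kernel, exterior part from $H^1$, polynomial part from higher Bocksteins, nilradical generated by the odd-degree classes for part (2)) is the right picture, but the proposal leaves the entire load-bearing step unproven, whereas the paper handles it by citing Weigel.
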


\subsection{Properties of twisted abelian $p$-groups}\label{subsection:furtherpropertiesoftwistedabelian}

We prove additional properties of the group $A_{\lambda}$ introduced in Definition \ref{defi:Alambda}.

\begin{Lem}\label{additionalpropertiesAlambda} With the notations of Definition \ref{defi:Alambda}, the following properties hold:
\begin{enumerate}
\item \label{propertiesAlambdapowerfulcondition}
$A_\lambda$ is powerful if and only if $\im(\lambda)\leq pA$.
\item \label{propertiesAlambdapcentralcondition}
$A_\lambda$ is $p$-central if and only if $\Omega_1(A)\leq \rad(\lambda)$.
\end{enumerate}
\end{Lem}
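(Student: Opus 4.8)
The plan is to reduce both equivalences to two explicit formulas for the group operation $+_\lambda$: the formula for $p$-th powers, already recorded in Lemma \ref{propertiesAlambda}\eqref{propertiesAlambdasamepowers}, and a formula for commutators that I will establish first.

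My first step is to compute the commutator of two elements $a,a'$ in $A_\lambda$. Using that the inverse of $a$ is $-a$ (because $\lambda(a,a)=0$), I would expand the group commutator $(-a)+_\lambda(-a')+_\lambda a+_\lambda a'$ one factor at a time. At each stage the class-two hypothesis $\im(\lambda)\subseteq\rad(\lambda)$ from Remark \ref{rmk:alwaysclasstwo} forces every nested term of the form $\lambda(\lambda(a,a'),-)$ to vanish, and after tracking the half-coefficients one arrives at the clean identity $[a,a']=\lambda(a,a')$. Two structural consequences follow immediately: the commutator subgroup $[A_\lambda,A_\lambda]$ is the subgroup generated by $\im(\lambda)$, and, since $a$ is central precisely when $\lambda(a,a')=0$ for all $a'$, one has $Z(A_\lambda)=\rad(\lambda)$.

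For the powerful condition \eqref{propertiesAlambdapowerfulcondition} I would identify $(A_\lambda)^p$ with $pA$. By Lemma \ref{propertiesAlambda}\eqref{propertiesAlambdasamepowers} the $p$-th power of $a$ in $A_\lambda$ is the ordinary multiple $pa$, so the set of $p$-th powers is exactly $pA$; to see this set is already a subgroup of $A_\lambda$ I would note that $pa+_\lambda pb=p(a+b)+\tfrac{p^2}{2}\lambda(a,b)$ and that $\tfrac{p^2}{2}\lambda(a,b)=p\cdot\bigl(\tfrac{p}{2}\lambda(a,b)\bigr)\in pA$, using that multiplication by $2$ is invertible on the $p$-group $A$ since $p$ is odd. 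Hence $(A_\lambda)^p=pA$, and $A_\lambda$ is powerful iff $\langle\im(\lambda)\rangle\subseteq pA$, which (as $pA$ is a subgroup) is equivalent to $\im(\lambda)\leq pA$.

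For the $p$-central condition \eqref{propertiesAlambdapcentralcondition} I would simply combine the two structural facts above: by Lemma \ref{propertiesAlambda}\eqref{propertiesAlambdasameOmega1} we have $\Omega_1(A_\lambda)=\Omega_1(A)$, and by the first step $Z(A_\lambda)=\rad(\lambda)$, so $\Omega_1(A_\lambda)\subseteq Z(A_\lambda)$ holds iff $\Omega_1(A)\leq\rad(\lambda)$. The one genuinely delicate point in the whole argument is the commutator computation, where one must carefully keep the $\tfrac12$-factors and repeatedly invoke $\im(\lambda)\subseteq\rad(\lambda)$ to discard the nested $\lambda$-terms; once $[a,a']=\lambda(a,a')$ is in hand, both equivalences are short formal consequences.
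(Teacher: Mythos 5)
Your proof is correct and takes essentially the same route as the paper's: the paper likewise derives both equivalences from the commutator identity $[a,b]=\lambda(a,b)$ in $A_\lambda$, combined with Lemma \ref{propertiesAlambda}\eqref{propertiesAlambdasamepowers} (so that $(A_\lambda)^p=pA$) for the powerful condition, and with Lemma \ref{propertiesAlambda}\eqref{propertiesAlambdasameOmega1} together with $Z(A_\lambda)=\rad(\lambda)$ for the $p$-central condition. The only difference is that you spell out details the paper leaves implicit, such as the explicit expansion establishing $[a,a']=\lambda(a,a')$ using $\im(\lambda)\subseteq\rad(\lambda)$ and the check that $pA$ is closed under $+_\lambda$.
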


\begin{proof}
To prove \eqref{propertiesAlambdapowerfulcondition}, note that the commutator of $a,b\in A_\lambda$ is given by $[a,b]=\lambda(a,b)$. Then it follows from Lemma \ref{propertiesAlambda}\eqref{propertiesAlambdasamepowers}. For the last item \eqref{propertiesAlambdapcentralcondition}, note that, by \eqref{propertiesAlambdasameOmega1} in Lemma \ref{propertiesAlambda}, $\Omega_1(A)=\Omega_1(A_\lambda)$, and that $Z(A_\lambda)=\rad(\lambda)$ by the commutator description above. Then, the claim holds.
\end{proof}

Now, recall the notation of Subsection \ref{subsection:uniserialpadicspacegroupsandconstructible} and consider the twisted abelian $p$-group $(T_0/U, +_\lambda)$ already regarded in Lemma \ref{lemma:GalphaGalpha0}. Under mild assumptions this group has nice properties.

\begin{Lem} \label{propertiesT0U+*}
Assume that 
\begin{equation}\label{equation:extraassumptionsUV}
V\leq pT_0
\end{equation}
and suppose that there is a $P$-invariant sublattice $W$ of $T$ such that
$U\leq pW$ and $W\leq pV$. Define $\lambda'\in \Hom_P(\Lambda^2(T_0/W),T_0/W)$ as $\gamma$ precomposed it with $T_0/W\twoheadrightarrow T_0/V$ and postcomposed it with $V/U\to T_0/W$. Then
\begin{enumerate}
\item $(T_0/U, +_\lambda)$ is a powerful $p$-central group, and
\item $(T_0/W,+_{\lambda'})$ is a powerful $p$-central group with $\Omega$EP.
\label{propertiesT0U+*TOWOmegaEP}
\end{enumerate} 
\end{Lem}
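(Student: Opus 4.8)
The plan is to verify the two assertions of Lemma \ref{propertiesT0U+*} by reducing them to the explicit criteria established in Lemma \ref{additionalpropertiesAlambda}, together with the structural properties of twisted abelian $p$-groups collected in Lemma \ref{propertiesAlambda}. Recall that $\lambda\in\Hom_P(\Lambda^2(T_0/U),T_0/U)$ is obtained from $\gamma\in\Hom_P(\Lambda^2(T_0/V),V/U)$ by precomposing with $T_0/U\twoheadrightarrow T_0/V$ and postcomposing with $V/U\hookrightarrow T_0/U$, so that $\im(\lambda)\subseteq V/U$ and $\rad(\lambda)\supseteq V/U$ (the latter because $\gamma$ kills the subgroup $V/U$ in the first variable). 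A similar remark applies to $\lambda'$, with $W$ in place of $U$.

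For the first point I would apply Lemma \ref{additionalpropertiesAlambda}\eqref{propertiesAlambdapowerfulcondition} and \eqref{propertiesAlambdapcentralcondition} in turn. To get that $(T_0/U,+_\lambda)$ is powerful, it suffices to show $\im(\lambda)\leq p(T_0/U)$; since $\im(\lambda)\subseteq V/U$ and the hypothesis $V\leq pT_0$ gives $V/U\subseteq (pT_0+U)/U=p(T_0/U)$, this is immediate. To get $p$-centrality, I need $\Omega_1(T_0/U)\leq\rad(\lambda)$. Here I would use $\rad(\lambda)\supseteq V/U$ and argue that, because $V\leq pT_0$, every element of order $p$ in $T_0/U$ already lies in $V/U$; concretely, if $py\in U$ then one wants $y\in V$, which should follow from the uniserial structure of the filtration $\{T_i\}$ together with $V\leq pT_0$ (the index relations $T_i=p^sT_j$ for $i=j+sd_x$ control exactly how $\Omega_1$ sits inside the quotient). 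This identification of $\Omega_1(T_0/U)$ with $V/U$ is the computational heart of part (1).

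For the second point, powerfulness and $p$-centrality of $(T_0/W,+_{\lambda'})$ follow by the very same two criteria: $\im(\lambda')\subseteq V/W\subseteq p(T_0/W)$ using $V\leq pT_0$, and $\Omega_1(T_0/W)\leq\rad(\lambda')$ using $W\leq pV\leq p^2T_0$ to force elements of order $p$ into $V/W\subseteq\rad(\lambda')$. The genuinely new ingredient is the $\Omega$-extension property. The natural candidate for the $p$-central overgroup $H$ with $H/\Omega_1(H)\cong(T_0/W,+_{\lambda'})$ is $(T_0/U,+_\lambda)$ itself: the extra hypotheses $U\leq pW$ and $W\leq pV$ are precisely what is needed so that the canonical surjection $T_0/U\twoheadrightarrow T_0/W$ is compatible with the twists (that is, $\lambda'$ is induced from $\lambda$) and so that its kernel $W/U$ is exactly $\Omega_1(T_0/U,+_\lambda)$. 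Thus I would show that $W/U=\Omega_1((T_0/U)_\lambda)$ — again by Lemma \ref{propertiesAlambda}\eqref{propertiesAlambdasameOmega1}, which lets me compute $\Omega_1$ in the untwisted lattice $T_0/U$ — and that the quotient by this subgroup recovers $(T_0/W)_{\lambda'}$ via Lemma \ref{propertiesAlambda}\eqref{propertiesAlambdaisoonquotient}.

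The main obstacle I anticipate is not the formal application of the criteria but the two $\Omega_1$-computations, namely identifying $\Omega_1((T_0/U)_\lambda)$ with $V/U$ in part (1) and with $W/U$ in part (2). These require translating the chain of lattice inclusions $U\leq pW\leq pV\leq p^2T_0$ into statements about which cosets are killed by $p$, and the subtlety is that $T_0/U$ need not be homocyclic, so $\Omega_1$ of an abelian $p$-group of mixed exponent must be handled with care. I expect the cleanest route is to exploit the uniserial filtration: since each $T_i$ is the unique $P$-invariant sublattice of its index and $T_{i+d_x}=pT_i$, the socle series of $T_0/U$ is governed by these $p$-power relations, and the inclusions among $U,W,V,pT_0$ pin down $\Omega_1$ precisely. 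Once these identifications are in place, everything else is a direct citation of Lemma \ref{additionalpropertiesAlambda} and Lemma \ref{propertiesAlambda}.
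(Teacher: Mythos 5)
Your treatment of powerfulness and $p$-centrality is close to the paper's, but with two corrections. There is no ``computational heart'' here and no need for the uniserial filtration or a socle analysis of mixed-exponent groups: since $U$ is a full-rank sublattice of the free $\Z_p$-module $T_0$, an element $y+U$ has order dividing $p$ iff $py\in U$ iff $y\in\frac{1}{p}U$, so $\Omega_1(T_0/U)=(\tfrac{1}{p}U)/U$ on the nose. The inclusion $\Omega_1(T_0/U)\leq V/U\leq\rad(\lambda)$ then follows from $U\leq pW\leq p^2V$ (hence $\tfrac{1}{p}U\leq pV\leq V$) --- \emph{not}, as you assert, from $V\leq pT_0$, which is used only for powerfulness. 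Moreover this is only an inclusion: your later phrase ``identifying $\Omega_1((T_0/U)_\lambda)$ with $V/U$'' claims an equality that is false in general and, fortunately, never needed.

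The genuine gap is in your $\Omega$EP argument. You propose $H=(T_0/U,+_\lambda)$ and claim that $U\leq pW$ is ``precisely what is needed'' so that $\Omega_1(H)=W/U$. But by the computation above together with Lemma \ref{propertiesAlambda}\eqref{propertiesAlambdasameOmega1}, $\Omega_1((T_0/U)_\lambda)=(\tfrac{1}{p}U)/U$, and $U\leq pW$ only gives $\tfrac{1}{p}U\leq W$, i.e.\ $\Omega_1(H)\leq W/U$, with strict inclusion whenever $U<pW$ (take $U=p^2W$, say). Then $H/\Omega_1(H)$ is the twisted group on $T_0/(\tfrac{1}{p}U)$, not on $T_0/W$, and your candidate fails. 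The paper's proof repairs exactly this defect: it introduces the auxiliary lattice $W'=pW$ with induced twist $\lambda''$ on $T_0/W'$, checks by the same two criteria of Lemma \ref{additionalpropertiesAlambda} that $(T_0/W',+_{\lambda''})$ is powerful $p$-central (using $W'\leq pV$ and $V\leq pT_0$), and here $\Omega_1((T_0/W')_{\lambda''})=(\tfrac{1}{p}W')/W'=W/W'$ holds \emph{exactly} because $W'=pW$ by construction; hence $(T_0/W,+_{\lambda'})\cong(T_0/W',+_{\lambda''})/(W/W')$ exhibits the $\Omega$EP. A final small misstep: Lemma \ref{propertiesAlambda}\eqref{propertiesAlambdaisoonquotient} is not the right citation for identifying the quotient, since its hypothesis $\im(\lambda)\subseteq R$ fails for $R=W/U$ (as $\im(\lambda)\leq V/U\not\leq W/U$) and its conclusion would make the quotient \emph{untwisted}; what is actually used is the straightforward fact that quotienting $A_\lambda$ by a subgroup of $\rad(\lambda)$ yields the twisted group with the induced twist.
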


\begin{proof} 
We have $\im(\lambda)\leq V/U\leq \rad(\lambda)$ and $\im(\lambda')\leq V/W\leq \rad(\lambda')$. Moreover, $V/U\leq (pT_0)/U=p(T_0/U)$ and $V/W\leq (pT_0)/W=p(T_0/W)$. Finally, $\Omega_1(T_0/U)=(\frac{1}{p}U)/U\leq (\frac{1}{p^2}U)/U\leq V/U$ and $\Omega_1(T_0/W)=(\frac{1}{p}W)/W\leq V/W$. Then it follows from Lemma \ref{additionalpropertiesAlambda}\eqref{propertiesAlambdapowerfulcondition} and Lemma \ref{additionalpropertiesAlambda}\eqref{propertiesAlambdapcentralcondition} that both  $(T_0/U,+_\lambda)$ and $(T_0/W,+_{\lambda'})$ are powerful $p$-central groups. The same arguments show that $(T_0/W',+_{\lambda''})$ is powerful $p$-central for $W'=pW$ and $\lambda''$ defined analogously. 
Now, by Lemma \ref{propertiesAlambda}\eqref{propertiesAlambdasameOmega1}, $\Omega_1((T_0/W')_{\lambda''})=\Omega_1(T_0/W')$ and this group is exactly $W/W'$. It is straightforward that 
\[
(T_0/W,+_{\lambda'})\cong (T_0/W',+_{\lambda''})/(W/W')
\]
and hence $(T_0/W,+_{\lambda'})$ has the $\Omega$EP. 
\end{proof}

\subsection{Standard uniserial $p$-adic space group}\label{subsection:standarduniserialpadicspacegroups}
Recall that in Subsection \ref{subsection:uniserialpadicspacegroupsandconstructible} we considered a uniserial $p$-adic space group, $1 \to T \to R \to P \to 1$, and we set  $\tilde{T}:= T \otimes_{\Z_p} \Q_p$. Since the action of $P$ on $T$ is faithful, $P$ can be embedded in $\GL(\tilde{T})=\GL_{d_x}(\Q_p)$, and hence $P$ is contained in a maximal $p$-subgroup of  $\GL_{d_x}(\Q_p)$. For $p$ odd, $\GL_{d_x}(\Q_p)$ has a unique maximal $p$-subgroup, $W(x)$, up to conjugation which is an iterated wreath product:
\[
W(x)=C_p\wr \overbrace{C_p \wr \cdots \wr C_p}^{x-1}.
\]
The action of $W(x)$ on the $p$-adic lattice $\Z_p^{d_x}$ is described as follows: the leftmost copy of $C_p$ is generated by  the companion matrix of the polynomial $y^{p-1}+ \cdots + y+1$, i.e,  by the following matrix:
\begin{equation}
\label{eq:action_maximal_class}
M=\begin{pmatrix}
0 &0 & \cdots &0 &-1 \\
1 &0 & \cdots &0 &-1 \\
0 &1 & \cdots &0 &-1 \\
\vdots & \vdots & &\vdots &\vdots \\
0 &0 &\cdots &1 &-1
\end{pmatrix} \in \GL_{p-1}(\Z).
\end{equation}
The remaining $(x-1)$ copies of $C_p$ act by permutation matrices as $\overbrace{C_p \wr \cdots \wr C_p}^{x-1}$ is the Sylow $p$-subgroup of the symmetric group $\Sigma_{p^{x-1}}$.

\begin{Rmk}\label{remark:sylowpeven}
For $p=2$, we have $d_x=2^{x-1}$ and there is another conjugacy class $\tilde{W}(x)$ in $\GL_{2^{x-1}}(\Q_2)$ given as follows:
\[
\tilde{W}(x)=Q_{16}\wr \overbrace{C_2 \wr \cdots \wr C_2}^{x-3},
\]
where $Q_{16}$ denotes the quaternion group of order 16. The action of $\tilde{W}(x)$ on $\Z_2^{d_x}$ is described in \cite{LeedGreenPleskenV}.  
\end{Rmk}

 For odd $p$ and after a suitable conjugation, we may assume that $T_0$ is an $W(x)$-invariant lattice with $T_0\leq \Z_p^{d_x}$. Hence, for $p$ odd we have 
\begin{equation}\label{equ:R_0containedZW(x)}
R_0=T_0\rtimes P\leq \Z_p^{d_x}\rtimes W(x),
\end{equation}
where both $T_0\rtimes P$ and $\Z_p^{d_x}\rtimes W(x)$  are split uniserial $p$-adic space groups. We call $\Z_p^{d_x}\rtimes W(x)$ the \emph{standard uniserial $p$-adic space group} of dimension $d_x$.

\section{Counting theorems}
\label{section:countingtheorems}

Throughout this section we use the sectional rank of a $p$-group $G$ (see \cite[\S11]{khukhro}):
\begin{align*}
\rk(G):&=\text{max}\{d(H) | H \leq G\}\\
&=\text{max}\{ \text{dim}_{\F_p}(H/N) | \; N \unlhd H \leq G, \; \text{where} \; H/N \; \text{is elementary abelian}\},
\end{align*}
where $d(H)$ denotes the number of minimal generators of $H$. The following properties are standard and can be found in \cite[\S 4 and \S11]{khukhro}. We shall use them without further notice.
\begin{enumerate}
\item If $G$ is a powerful $p$-group, then $\rk(G)=d(G)$.
\item If $N$ is a subgroup of $G$ then $\rk(N)\leq \rk(G)$.
\item If $N$ is a normal subgroup of $G$ then $\rk(G)\leq \rk(G/N) + \rk(N)$.
\end{enumerate}

Next, we recall some results of J.F. Carlson from \cite{Carlson} and we prove some natural generalizations. 

\begin{Thm}[{\cite[Theorem 2.1]{Carlson}}]\label{Car2}
Let $R$ be a finitely generated, graded-commutative $\F_p$-algebra and
let $S$ be the bigraded algebra induced by some filtration of $R$. Then the algebra structure of $R$ is determined by the algebra structure of $S$ within a finite number of possibilities. 
\end{Thm}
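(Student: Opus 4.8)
The plan is to realise $R$ as a quotient of a free graded-commutative $\F_p$-algebra and then to show that, once the bigraded algebra $S$ is fixed, the relations defining $R$ are pinned down up to lower-filtration correction terms lying in finite-dimensional $\F_p$-vector spaces, so that only finitely many choices remain.

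First I would choose homogeneous algebra generators $\bar s_1,\ldots,\bar s_n$ of $S$; these exist because $R$ is finitely generated and the filtration is multiplicative and finite in each total degree, so $S$ is again finitely generated. Lifting each $\bar s_i$ to an element $g_i\in R$ of the appropriate filtration degree, a filtered Nakayama argument (valid since the filtration is exhaustive and finite in each degree) shows that $g_1,\ldots,g_n$ generate $R$. Hence $R=\Lambda/I$, where $\Lambda$ is the free graded-commutative algebra on generators $X_1,\ldots,X_n$ with $|X_i|=|g_i|$ and $I$ is a homogeneous ideal. Transporting the filtration to $\Lambda$ by assigning to $X_i$ the filtration degree of $g_i$, one obtains $S\cong \Lambda/\mathrm{in}(I)$, where $\mathrm{in}(I)$ is the ideal of initial (leading-filtration) forms; the point is that $\mathrm{in}(I)$ is completely determined by $S$.

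Next I would exploit that $\Lambda$ is Noetherian and that each of its total-degree components is finite-dimensional over $\F_p$ (the generators sit in positive degree). Thus $\mathrm{in}(I)$ admits a finite homogeneous generating set $\bar r_1,\ldots,\bar r_m$ whose total degrees $e_1,\ldots,e_m$ can be read off from $S$. Any lift $r_k\in I$ with initial form $\bar r_k$ has the shape $r_k=\tilde r_k+c_k$, where $\tilde r_k$ is one fixed lift and $c_k$ is a correction term of the same total degree but strictly lower filtration. Since $c_k$ ranges over the finite-dimensional $\F_p$-vector space $\Lambda^{e_k}$, there are only finitely many possibilities for the tuple $(c_1,\ldots,c_m)$.

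The key technical step, and the main obstacle, is the lifting lemma asserting that elements $r_1,\ldots,r_m\in I$ whose initial forms generate $\mathrm{in}(I)$ already generate $I$ itself. This is a filtered analogue of the fact that a Gr\"obner basis generates its ideal, and it rests on a successive-approximation argument that terminates precisely because each total-degree component of $\Lambda$ is finite-dimensional. Granting this, the ideal $I$ is determined by $S$ together with the finite tuple $(c_1,\ldots,c_m)$; as this tuple ranges over a finite set, there are finitely many ideals $I$ and hence finitely many isomorphism types for $R=\Lambda/I$, which is exactly the assertion.
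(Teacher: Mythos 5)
This theorem is quoted from Carlson and the paper offers no proof of its own; your argument reproduces Carlson's original one essentially step for step (lift bihomogeneous generators of $S$ to generators of $R$, present $R=\Lambda/I$ with $\Lambda$ free graded-commutative and filtered by weights, use Noetherianity of $\Lambda$ to get finitely many bihomogeneous generators of the initial ideal, lift them to generators of $I$ by the terminating successive-approximation argument you flag, and count the lower-filtration corrections $c_k$ inside the finite $\F_p$-vector spaces $\Lambda^{e_k}$). The one sentence to treat with care is your claim that finite generation of $R$ together with multiplicativity and degreewise finiteness of the filtration forces $S$ to be finitely generated: this implication is false in general (for instance, filtering $\F_p[x]$ by assigning $x^n$ the weight $v_p(n!)$, which is superadditive, yields an associated bigraded algebra needing a generator in every degree $p^k$), so finite generation of $S$ should be regarded as part of the standing hypotheses --- as it is, harmlessly, in Carlson's setting and in this paper's applications, where $S$ is the $E_\infty$-page of a spectral sequence of algebras over a finitely generated, Noetherian $E_2$-page and is therefore finitely generated.
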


\begin{Thm}[{\cite[Theorem 3.5]{Carlson}}]\label{Car1} Let $n$ be a positive integer. Suppose that $S$ is a finitely generated $k$-algebra. Then, there are only finitely many $k$-algebras $R$ with the property that $R \cong H^*(H,k)$ for $H$ a subgroup of a $p$-group $G$ with $H^*(G;k)\cong S$ and $|G:H| \leq p^n.$
\end{Thm}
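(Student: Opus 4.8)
The plan is to realize $H^*(H;k)$ as the cohomology of $G$ with coefficients in the finite-dimensional $G$-algebra $k[G/H]$, to filter the latter so as to produce a multiplicative spectral sequence whose $E_1$-page is built out of $S$, and finally to feed the associated graded of $H^*(H;k)$ into Theorem \ref{Car2}. Throughout, $k=\F_p$ is finite, which is precisely what makes the relevant enumerations finite.

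\emph{Step 1 (Shapiro).} Since $m:=|G:H|\le p^n$ is finite, induction and coinduction agree and the Eckmann--Shapiro isomorphism
\[
H^*(H;k)\cong H^*\!\big(G;k[G/H]\big)
\]
is an isomorphism of graded $k$-algebras, where $k[G/H]$ carries its natural structure of commutative $G$-algebra and $\operatorname{dim}_k k[G/H]=m\le p^n$. This reduces the problem to bounding, up to finitely many possibilities, the ring $H^*(G;A)$ for $A=k[G/H]$ a commutative $G$-algebra of dimension at most $p^n$.

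\emph{Step 2 (filtration and $E_1$).} I would choose a multiplicative filtration of $A$ by $G$-stable ideals $A=I_0\supseteq I_1\supseteq\cdots\supseteq I_m=0$ with $I_aI_b\subseteq I_{a+b}$ and each quotient $I_s/I_{s+1}$ a one-dimensional, hence trivial, $kG$-module; such a filtration exists because $G$ is a finite $p$-group and $k=\F_p$, so the trivial module is the only simple $kG$-module. It induces a multiplicative first-quadrant spectral sequence of $S$-algebras converging to $H^*(H;k)$, with
\[
E_1\cong S\otimes_k \operatorname{gr}A,\qquad E_\infty\cong \operatorname{gr}H^*(H;k),
\]
the identification on $E_\infty$ being one of bigraded algebras for the induced filtration of $H^*(H;k)$. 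Because $k$ is finite, there are only finitely many $k$-algebra structures on a vector space of dimension $\le p^n$ (they are parametrized by structure constants in the finite set $k$), so $\operatorname{gr}A$, and hence the finitely generated bigraded $S$-algebra $E_1$, ranges over finitely many isomorphism types as $H$ varies.

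\emph{Step 3 (from $E_1$ to $E_\infty$, and conclusion).} The filtration has length $m\le p^n$, so the spectral sequence has at most $p^n$ nonzero columns and degenerates by page $p^n+1$. At each page $E_r$ is a finitely generated bigraded module over the Noetherian, degreewise finite $\F_p$-algebra $S$ (the permanent cycles coming from $H^*(G;k)$), and by the Leibniz rule the differential $d_r$ is $S$-linear. Thus $d_r$ lies in the $\F_p$-vector space of $S$-linear endomorphisms of $E_r$ of the relevant bidegree, which is finite-dimensional since such a map is determined by the images of finitely many module generators, each landing in a finite-dimensional graded piece. As $\F_p$ is finite, there are finitely many possible differentials $d_r$, hence finitely many possible pages $E_{r+1}=H(E_r,d_r)$; inducting over the boundedly many pages, starting from the finitely many $E_1$, shows that $\operatorname{gr}H^*(H;k)=E_\infty$ takes only finitely many values as bigraded $k$-algebras. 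Finally, $H^*(H;k)$ is a finitely generated graded-commutative $\F_p$-algebra (Evens--Venkov) filtered with associated bigraded algebra $E_\infty$, so applying Theorem \ref{Car2} to each of the finitely many possibilities for $E_\infty$ yields finitely many possibilities for $R\cong H^*(H;k)$.

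The main obstacle is Step 3: passing from the easily bounded $E_1$ to the limit $E_\infty$. A priori a fixed finitely generated bigraded $\F_p$-algebra has infinitely many subquotient algebras, so finiteness genuinely exploits both the bounded filtration length (at most $p^n$ columns, hence finitely many pages) and the finiteness of $\F_p$ together with the $S$-linearity of the differentials, which together cut the possible differentials, and therefore the possible outcomes, down to a finite set. A secondary technical point to verify carefully is the existence of a multiplicative $G$-ideal filtration of $k[G/H]$ with trivial quotients, so that $E_\infty$ really is $\operatorname{gr}H^*(H;k)$ as bigraded algebras rather than merely additively.
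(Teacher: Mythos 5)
Your Step 2 is not a ``secondary technical point to verify'': it fails, and the proof collapses there. For the Shapiro isomorphism of Step 1 to be multiplicative, the coefficient algebra must be $A=\operatorname{Maps}(G/H,k)$ with the pointwise product (equivalently, $k[G/H]$ with the coset basis acting as orthogonal idempotents; the convolution product is not even defined unless $H\trianglelefteq G$, and when it is, it induces the wrong ring on $H^*(G;k[G/H])$ --- already for $H=1$ the convolution square of the invariant $\sum_g g$ is $|G|\sum_g g=0$, while $H^*(1;k)=k$ is unital). Now any filtration $A=I_0\supseteq I_1\supseteq\cdots\supseteq I_m=0$ with $I_aI_b\subseteq I_{a+b}$ gives in particular $A\cdot I_s\subseteq I_s$, so each $I_s$ must be a $G$-stable ideal of $A$. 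But $A\cong k^{|G:H|}$ is a semisimple commutative ring whose ideals are exactly the sets of functions vanishing on a subset $Y\subseteq G/H$, and such an ideal is $G$-stable precisely when $Y$ is $G$-invariant; since $G$ acts transitively on $G/H$, the only $G$-stable ideals are $0$ and $A$. Hence for $H\neq G$ the multiplicative filtration you posit does not exist at all. Your composition-series argument (trivial module being the unique simple $kG$-module) produces only a filtration by $kG$-submodules; it cannot be made multiplicative. Consequently $\operatorname{gr}A$ is not an algebra in any equivariant sense, $E_1\cong S\otimes_k\operatorname{gr}A$ and $E_\infty\cong\operatorname{gr}H^*(H;k)$ hold only as bigraded $S$-modules, and Theorem \ref{Car2}, which needs the associated bigraded \emph{algebra}, cannot be invoked: no bound on the ring structure of $R$ is obtained.

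What survives is correct but strictly weaker. Step 1 (multiplicative Shapiro) is fine, and your Step 3 bookkeeping --- at most $p^n$ columns and pages, $S$-linear differentials determined by the images of finitely many module generators in bounded total degree, each image ranging over a finite-dimensional $\F_p$-space --- is sound; it is the same counting scheme as \cite[Proposition 3.1]{Carlson}, which the present paper reuses in the proof of Theorem \ref{thm:cohringsfromquotient}. Run additively, your argument does show there are finitely many possibilities for $\operatorname{gr}H^*(H;k)$ as a bigraded $S$-module, hence for $H^*(H;k)$ as a graded $\F_p$-vector space, but the statement concerns $k$-algebras. Note also that the paper contains no internal proof of Theorem \ref{Car1}: it is imported verbatim from \cite[Theorem 3.5]{Carlson}, so the comparison is with Carlson's argument, which does not rely on a multiplicative filtration of $k[G/H]$. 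Any repair of your approach must supply the multiplicative information by other means --- for instance, reducing along a subnormal chain to $|G:H|=p$ and controlling $H^*(H;k)$ as a finitely generated module over $S$ via restriction, with module generators and their pairwise products confined to finite, bounded-degree sets --- since the transitivity obstruction above closes off the filtration route to the product structure.
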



Now we need to generalize \cite[Theorem 3.3]{Carlson}. We start with the following two lemmas.

\begin{Lem}
\label{lema:liftingLie}
Let $L$ be a finite $\Z_p$-Lie algebra of nilpotency class two. Then there exist $\tilde{L}$ a finite powerful $p$-central nilpotent $\Z_p$-Lie algebra of nilpotency class $2$ such that $pL\cong \tilde{L}/\Omega_1(\tilde{L})$.
\end{Lem}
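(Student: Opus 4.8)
The plan is to construct $\tilde{L}$ explicitly as a subalgebra of a suitable ``doubling'' of $L$, mimicking the standard construction used to realize a $p$-central quotient. Concretely, given the finite $\Z_p$-Lie algebra $L$ of nilpotency class two, I would first replace $L$ by $pL$ (which is again of class at most two and will be the target quotient) and then look for a Lie algebra $\tilde{L}$ whose quotient by its order-$p$ elements recovers $pL$. The natural candidate is to build $\tilde{L}$ on the module level by finding a $\Z_p$-lattice $\Lambda$ surjecting onto the underlying group of $pL$ with kernel exactly $\Omega_1(\tilde{L})$, and then transporting the bracket. Since $L$ is a finite $\Z_p$-module, write $L\cong \bigoplus_i \Z/p^{e_i}$ as abelian groups; the idea is to take $\tilde{L}$ to have underlying module $\bigoplus_i \Z/p^{e_i+1}$ (one higher exponent in each cyclic factor) so that multiplication by $p$ realizes the surjection onto $pL$ with kernel the socle $\Omega_1(\tilde{L})=\bigoplus_i p^{e_i}\Z/p^{e_i+1}$.

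The key steps, in order, would be as follows. First I would define the bracket on $\tilde{L}$: lift the bilinear alternating bracket of $L$ (equivalently, of $pL$) to $\tilde{L}$ using the surjection, taking care that the lift lands in $p\tilde{L}$ so that the Jacobi identity and class-two condition are preserved and so that $\tilde{L}$ becomes powerful, i.e. $[\tilde{L},\tilde{L}]\subseteq p\tilde{L}$. Second, I would verify that $\tilde{L}$ is $p$-central, meaning $\Omega_1(\tilde{L})\subseteq Z(\tilde{L})$; this should follow because the socle elements $p^{e_i}\Z/p^{e_i+1}$ are annihilated by $p$ and the bracket takes values in $p\tilde{L}$, so bracketing any socle element against anything lands in something killed by the relevant power of $p$ and vanishes. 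Third, I would check that the quotient $\tilde{L}/\Omega_1(\tilde{L})$ is isomorphic to $pL$ as a $\Z_p$-Lie algebra, where multiplication by $p$ induces the module isomorphism $\bigoplus_i \Z/p^{e_i}\xrightarrow{\cong} pL$ and intertwines the brackets by construction. Finally I would confirm nilpotency class exactly two (or at most two, which is all that is claimed) and finiteness, both of which are immediate from the finiteness of $L$.

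The main obstacle I expect is the \emph{compatibility of the bracket with the lift}: the bracket on $L$ is only defined modulo $p^{e_i}$ in each coordinate, and I must show it lifts to a genuine alternating biadditive bracket on the larger module $\tilde{L}$ that simultaneously (a) satisfies the Jacobi identity, (b) is powerful (values in $p\tilde{L}$), and (c) descends correctly under multiplication-by-$p$ to the original bracket on $pL$. The class-two hypothesis helps enormously here because $[\tilde{L},\tilde{L}]$ is central, so Jacobi reduces to the alternating/bilinearity conditions and there are no higher brackets to control. The delicate point is a counting/divisibility check: one needs the image of the bracket to lie in $pL$ (so that it is in the image of multiplication by $p$ and hence liftable) and in the radical-type subspace ensuring centrality, which is exactly why the statement produces a quotient of $pL$ rather than of $L$ itself. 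I would handle this by choosing the lift of the bracket coordinatewise on a generating set and extending biadditively, then verifying the required containments directly from the exponents $e_i$.
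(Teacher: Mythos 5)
Your construction has an off-by-one error at the module level that is fatal as stated. With $\tilde{L}=\bigoplus_i \Z/p^{e_i+1}$ you correctly identify $\Omega_1(\tilde{L})=\bigoplus_i p^{e_i}\Z/p^{e_i+1}$, but then $\tilde{L}/\Omega_1(\tilde{L})\cong\bigoplus_i \Z/p^{e_i}\cong L$, \emph{not} $pL$; the surjection $\tilde{L}\twoheadrightarrow pL$ with kernel exactly the socle that you posit cannot exist, since $|\tilde{L}|/|\Omega_1(\tilde{L})|=|L|>|pL|$ whenever $L\neq 0$. Nor is this a harmless bookkeeping slip that one could absorb elsewhere: if $\tilde{L}$ is powerful then so is $\tilde{L}/\Omega_1(\tilde{L})$ (the bracket of the quotient lands in the image of $p\tilde{L}$), so a quotient isomorphic to $L$ itself is impossible whenever $[L,L]\not\subseteq pL$ --- e.g.\ for the Heisenberg algebra over $\F_p$, where $pL=0$ but $[L,L]\neq 0$. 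This is exactly why the lemma concerns $pL$ rather than $L$, and it is also where your lifting step breaks: a lift of $[g_i,g_j]$ that lies in $p\tilde{L}$ projects into $pL$ under the reduction map, so lifts "landing in $p\tilde{L}$" exist only if $[L,L]\subseteq pL$, which the class-two hypothesis does not provide.

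The correct $\tilde{L}$ has the \emph{same} underlying module as $L$, with the bracket rescaled by $p$ --- in your coordinates, the submodule $p\bigl(\bigoplus_i \Z/p^{e_i+1}\bigr)$ of your doubled module, not the doubled module itself. The paper realizes this in a relation-free way: write $L=M/I$ with $M$ free over $\Z_p$ of rank $d=d(L)$ and $I\subseteq pM$ (minimality of the generating set gives the containment), lift the antisymmetric bilinear bracket to $\{\,,\}\colon M\times M\to M$ --- on a free module this is unconditional, which dissolves the "delicate counting/divisibility check" you worry about when extending biadditively on a torsion module by generators --- and set $\tilde{L}=pM/pI$. Powerfulness is automatic since $\{pM,pM\}\subseteq p^2M$; triple brackets lie in $p^3I\subseteq pI$, so Jacobi and class $\leq 2$ hold in $\tilde{L}$; $\Omega_1(\tilde{L})=I/pI$ is central; and the third isomorphism theorem gives $(pM/pI)/(I/pI)\cong pM/I=pL$. (In Smith normal form, $M/pI\cong\bigoplus_i \Z/p^{e_i+1}$ is precisely your doubled module, so your instinct located the right ambient object but took the wrong piece of it.) With that correction, the remaining steps of your plan --- socle centrality, triviality of Jacobi in class two, finiteness --- do go through and coincide with the paper's argument.
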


\begin{proof}
Consider the $\Z_p$-Lie algebra $L$ as a $d$-generated abelian $p$-group. Then, $L$ is isomorphic to a quotient of the free $\Z_p$-module $M$ of rank $d$ by a submodule $I$ which is contained in $pM$. The Lie bracket in $L$ is an antisymmetric bilinear form $[,]\colon L\times L\to L$ and therefore, it can be lifted to an antisymmetric bilinear form $\{ ,\}\colon M\times M\to M$. A direct computation shows that $\{ ,\}$ satisfies the Jacobi identity in $\tilde{L}=pM/pI$. Furthermore, $(\tilde{L},+, \{,\})$ is a powerful $p$-central $\Z_p$-Lie algebra. 
By the third isomorphism theorem 
$$\frac{\tilde{L}}{\Omega_1(\tilde{L})}=\frac{pM/pI}{I/pI}\cong \frac{pM}{I}=pL,$$
and this concludes the proof.
\end{proof}

\begin{Lem}\label{lemma:CpGQ}
Let $p$ be an odd prime and let $G$ be a $p$-group with $\rk(G)\leq r$, and let
\begin{equation}
 \xymatrix{
1 \ar[r] &C_p \ar[r] &G\ar[r]^{\pi} &Q \ar[r] &1,}
\end{equation}
be an extension of groups. Suppose that $Q$ has a subgroup $A$ of nilpotency class $2$. Set $B=\pi^{-1} (A)^{p^2}$. Then $B$ is a powerful $p$-central $p$-group of nilpotency class $2$ with the $\Omega$EP and $|G:B|\leq p^{4r}|Q:A|$.
\end{Lem}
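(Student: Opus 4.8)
The plan is to work throughout with the full preimage $P:=\pi^{-1}(A)\le G$. Then $C_p\le P$, the sequence $1\to C_p\to P\to A\to 1$ is a \emph{central} extension (a normal subgroup of order $p$ in a $p$-group lies in the centre), $|G:P|=|Q:A|$, and $\rk(P)\le\rk(G)\le r$. The first observation is that $P$ has nilpotency class at most $3$: since $P/C_p\cong A$ has class $2$ we get $\gamma_3(P)\le C_p$, and as $C_p$ is central $\gamma_4(P)=[\gamma_3(P),P]=1$. Thus $\gamma_3(P)$ is a central subgroup of exponent $p$, and this is the only obstruction to $P$ itself being of class $2$.

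First I would settle the index bound. The quotient $P/P^{p^2}$ has exponent dividing $p^2$, sectional rank at most $r$, and class at most $3$. Bounding its order along the lower central series, the two abelian factors $\gamma_1/\gamma_2$ and $\gamma_2/\gamma_3$ each have exponent dividing $p^2$ and rank at most $r$, hence order at most $p^{2r}$, while the class-$3$ factor embeds in $\gamma_3(P)\le C_p$; collecting these gives $|P:P^{p^2}|\le p^{4r}$ and therefore $|G:B|=|G:P|\,|P:P^{p^2}|\le p^{4r}|Q:A|$.

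Next I would establish the structural properties of $B=P^{p^2}$ by commutator collection in the class-$\le 3$ group $P$, where $p$ being odd is essential. Because $\gamma_3(P)\le C_p$ has exponent $p$ and $\binom{p^2}{2}\equiv 0\pmod p$, the collection formula degenerates (with no surviving class-$3$ correction) to $[x^{p^2},y^{p^2}]=[x,y]^{p^4}$ for all $x,y\in P$. From $[x,y]^{p^4}=\big([x,y]^{p^3}\big)^{p}$ together with $[x,y]^{p^3}=\big([x,y]^{p}\big)^{p^2}\in P^{p^2}=B$ I conclude that these commutators lie in $B^p$, so $[B,B]\le B^p$ and $B$ is powerful. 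The same computation shows that a triple commutator $[[x^{p^2},y^{p^2}],z^{p^2}]$ reduces to a $p^4$-th power of an element of $\gamma_3(P)$, hence vanishes; so $\gamma_3(B)=1$ and $B$ has class at most $2$.

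Finally, for the $\Omega$EP I would pass to Lie algebras. As $B$ is powerful of class $2$ and $p$ is odd, the class is smaller than $p$, so the Lazard correspondence attaches to $B$ a powerful $\Z_p$-Lie algebra $L_B$ of class $2$; and since $B=(P^p)^p$ consists of $p$-th powers, under Lazard this identifies $L_B$ with $pL$ for the class-$2$ Lie algebra $L:=L_{P^p}$. Applying Lemma \ref{lema:liftingLie} to $L$ produces a powerful $p$-central $\Z_p$-Lie algebra $\tilde L$ with $\tilde L/\Omega_1(\tilde L)\cong pL\cong L_B$; exponentiating $\tilde L$ back to a $p$-central $p$-group $H$ then realises $B\cong H/\Omega_1(H)$, which is exactly the $\Omega$EP and which simultaneously upgrades $B$ to a \emph{$p$-central} powerful group as the $\Omega_1$-quotient of one. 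The main obstacle I anticipate is the $\Omega$EP step: carefully matching the group-theoretic and Lie-theoretic pictures so that Lemma \ref{lema:liftingLie} genuinely applies (that is, rigorously identifying $L_B$ with $pL$ via Lazard), and checking that the collection identities used for powerfulness and class $2$ extend from generators to all of $B$.
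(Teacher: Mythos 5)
Your architecture is essentially the paper's: pass to $C=\pi^{-1}(A)$, note it has class at most $3$ with $\gamma_3(C)$ central of order $p$, extract powerfulness and class $2$ of $B$ by commutator collection, and obtain the $\Omega$EP via the Lazard correspondence and Lemma \ref{lema:liftingLie}. But as written there are three concrete gaps. First, your index count does not deliver the stated bound: the three factors you collect are $|\overline{P}/\gamma_2(\overline{P})|\le p^{2r}$, $|\gamma_2(\overline{P})/\gamma_3(\overline{P})|\le p^{2r}$ and $|\gamma_3(\overline{P})|\le p$ (where $\overline{P}=P/P^{p^2}$), whose product is $p^{4r+1}$, not $p^{4r}$. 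The repair is cheap: since $[\gamma_2(P),\gamma_2(P)]\le\gamma_4(P)=1$, the subgroup $\gamma_2(\overline{P})$ is \emph{abelian} of exponent dividing $p^2$ and rank at most $r$, hence of order at most $p^{2r}$ in a single step, which recovers $p^{4r}$. (The paper instead filters twice through the Frattini subgroup, using $|C:\Phi(C)|\le p^r$ and $|\Phi(C):C^p|\le p^r$ because $\Phi(C)/C^p$ is elementary abelian, and similarly for $D=C^p$.)

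Second, your Lazard step applies the correspondence to $L=L_{P^p}$, so you need $P^p$ --- not merely $B$ --- to have class at most $2$, and you never prove this; your own degeneration argument one level down ($[x^p,y^p]=[x,y]^{p^2}$ because $\binom{p}{2}\equiv 0 \bmod p$ and $\gamma_3(P)$ is central of exponent $p$, whence $\gamma_3(P^p)=1$ after passing to normal closures) fills the hole, whereas the paper quotes $[C^p,C^p,C^p]=[C,C,C]^{p^3}=1$ from the omega-subgroup literature. You also silently identify $P^{p^2}$ with $(P^p)^p$ when switching between the two pictures (the paper does the same). Third, and most seriously, you justify $p$-centrality of $B$ by asserting that an $\Omega_1$-quotient of a $p$-central group is automatically $p$-central. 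This is not a standard fact and you give no argument: in Definition \ref{defi:omegaEP} the $p$-centrality of $G$ is a standing hypothesis, not a consequence of the extension, and your claim would require $[\Omega_2(H),H]\le\Omega_1(H)$, which you have not established. In the present situation $p$-centrality can be checked directly: $D=P^p$ has class $2$ and $p$ is odd, so $D$ is regular and every element of $B=D^p$ is an honest $p$-th power $d^p$; if $(d^p)^p=d^{p^2}=1$, then $[d^p,e^p]=[d,e]^{p^2}=[d^{p^2},e]=1$, i.e.\ $\Omega_1(B)\le Z(B)$. Alternatively, the explicit construction $\tilde{L}=pM/pI$ in the proof of Lemma \ref{lema:liftingLie} shows that $pL\cong pM/I$ is itself $p$-central, since $x\in pM$ with $px\in I$ gives $\{x,pM\}=p\{x,M\}=\{px,M\}\subseteq I$. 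With these three repairs your argument is sound and follows the paper's route.
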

\begin{proof}
Put $C=\pi^{-1} (A)$, $D=C^p$ and let $N$ be the image of $C_p$ in $G$,
then $[D,D,D]=[C^p,C^p,C^p]=[C,C,C]^{p^3}\subseteq N^{p^3}=1$ (see \cite[Theorem 2.4]{GustavoJonAndrei} and \cite[Theorem 2.10]{Gustavo2000}). In particular $D$ is a $p$-group of nilpotency class $2$. By the Lazard correspondence (for general theory see \cite[\S9 and \S10]{khukhro} and for the explicit formulae see \cite{CicaloGraafLee}), $D=\textbf{exp} (L)$ where $L$ is a $\Z_p$-Lie algebra of nilpotency class two. By Lemma \ref{lema:liftingLie} and again by the Lazard correspondence $B=D^p=\textbf{exp} (pL)$ is a powerful $p$-central $p$-group of nilpotency class $2$ with the $\Omega$EP. Indeed, $pL\cong \tilde{L}/\Omega_1 (\tilde{L})$ where $\tilde{L}$ is a powerful $p$-central Lie algebra of nilpotency class $2$ and by applying the functor $\textbf{exp}$ we obtain that $\textbf{exp} (pL)=\textbf{exp} (\tilde{L})/\textbf{exp}(\Omega_1 (\tilde{L}))$.

We also have $|G:B|=|G:C||C:D||D:B|$ and $|G:C|=|Q:A|$, where $C/D$ and $D/B$ have exponent $p$. Moreover, $C$ has rank at most $r$ and nilpotency class at most $3$. We may write 
\[
|C:D|=|C: \Phi(C)||\Phi(C):D| \; \text{and} \; |D:B|=|D:\Phi(D)||\Phi(D):B|.
\]
Notice that by Burnside Base Theorem in \cite[Theorem 4.8]{khukhro}, the Frattini factor group $C/\Phi(C)$ is an elementary abelian $p$-group and thus, $|C: \Phi(C)| \leq p^{r}$. Also, it is not hard to check that $\Phi(C)/D$ is an elementary abelian group. Then, $|\Phi(C):D|\leq p^r$. Hence, $|C:D| \leq p^{2r}$ and similarly, we obtain that $|D:B|\leq p^{2r}$. Then, the bound in the statement follows.
\end{proof}

\begin{Thm}\label{thm:cohringsfromquotient}
Let $p$ be an odd prime and suppose that 
\begin{equation}
\label{eq:cohringsfromquotient}
1 \to H \to G \to Q \to 1
\end{equation}
is an extension of finite $p$-groups with $|H|\leq n$, $\rk(G)\leq r$ and $Q$ has a subgroup $A$ of nilpotency class $2$ with $|Q:A| \leq f$. Then the ring $H^*(G;\F_p)$ is determined up to a finite number of possibilities (depending on $p$, $n$, $r$ and $f$) by the ring $H^*(Q;\F_p)$.
\end{Thm}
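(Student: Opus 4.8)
The plan is to reduce the general extension \eqref{eq:cohringsfromquotient} to a situation where I can apply the counting theorems of Carlson together with Weigel's computation (Theorem \ref{ThmWeigel}). The strategy is to descend from $G$ to a large, cohomologically transparent subgroup whose cohomology ring is fixed, and then climb back up controlling the finitely many choices at each stage.

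First I would use Lemma \ref{lemma:CpGQ} to extract a good subgroup. Since $|H|\leq n$, the kernel $H$ has a composition series of bounded length all of whose factors are $C_p$; iterating Lemma \ref{lemma:CpGQ} (or applying it to a suitable filtration of $H$) lets me pass from $G$ to a subgroup $B\leq G$ that is powerful, $p$-central, of nilpotency class $2$, and has the $\Omega$EP, while keeping $|G:B|$ bounded in terms of $p$, $n$, $r$, and $f$. The hypothesis that $Q$ contains a subgroup $A$ of class $2$ with $|Q:A|\leq f$ is exactly what feeds the lemma; the bound $|G:B|\leq p^{4r}|Q:A|\cdot(\text{factor from }|H|)$ is uniform. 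Because $B$ is powerful $p$-central with $\Omega$EP, Theorem \ref{ThmWeigel} pins down $H^*(B;\F_p)$ up to isomorphism as $\Lambda[x_1,\dots,x_d]\otimes \F_p[y_1,\dots,y_d]$ with $d=\dim_{\F_p}(\Omega_1(B))$, and here $d\leq \rk(G)\leq r$ is bounded, so there are only finitely many possibilities for this ring.

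Next I would climb from $B$ back to $G$. Theorem \ref{Car1}, applied with $S=H^*(B;\F_p)$ (finitely generated, and chosen from finitely many options by the previous step) and with the uniform index bound $|G:B|\leq p^{m}$, shows that $H^*(G;\F_p)$ is determined up to finitely many possibilities by $H^*(B;\F_p)$, hence up to finitely many possibilities overall. The one subtlety is that $B$ was produced as a subgroup of $G$, not directly from $Q$, so I must make sure that the number of possibilities for $H^*(B;\F_p)$ is genuinely finite and does not secretly depend on $G$: this is guaranteed because $\dim_{\F_p}\Omega_1(B)\leq r$ bounds $d$, and Weigel's theorem gives the ring explicitly in terms of $d$ alone.

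The main obstacle I anticipate is the bookkeeping in the descent step, namely verifying that the passage through $H$ of order at most $n$ really does land inside a single application (or a bounded iteration) of Lemma \ref{lemma:CpGQ} with all indices controlled simultaneously by $p$, $n$, $r$, $f$. One must check that at each stage the relevant subgroup of the intermediate quotient still has class $2$ and bounded index, so that the hypotheses of Lemma \ref{lemma:CpGQ} are preserved down the series; the rank bound $\rk(G)\leq r$ is what keeps the accumulated index factors of the form $p^{4r}$ under control. Once the uniform bound $|G:B|\leq p^{m}$ with $m=m(p,n,r,f)$ is in hand, the remainder is a direct invocation of Theorem \ref{ThmWeigel} and Theorem \ref{Car1}, with no further geometric input required.
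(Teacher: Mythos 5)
Your descent step is essentially sound (and close in spirit to the paper's: the paper also runs an induction through a chief series of $H$, re-applying Lemma \ref{lemma:CpGQ} at each stage to keep a class-$2$ subgroup of bounded index available), but the climb back up contains a genuine gap: you invoke Theorem \ref{Car1} in the wrong direction. That theorem starts from the cohomology ring $S\cong H^*(G;k)$ of the \emph{ambient} group and bounds the number of possible rings $H^*(H;k)$ for \emph{subgroups} $H\leq G$ of bounded index; it does not allow you to recover $H^*(G;\F_p)$, even up to finitely many possibilities, from the ring $H^*(B;\F_p)$ of a bounded-index subgroup $B\leq G$. No result in the paper (or in Carlson's) provides such a ``going up from a subgroup'' statement, and your argument supplies no substitute for it: knowing $H^*(B;\F_p)\cong \Lambda[x_1,\dots,x_d]\otimes\F_p[y_1,\dots,y_d]$ with $d\leq r$ says nothing, by itself, about how the cohomology of $G$ is assembled from it. A telling symptom is that your argument never uses the ring $H^*(Q;\F_p)$ at all: if it worked, it would prove the much stronger \emph{absolute} statement that there are finitely many cohomology rings of all $p$-groups of rank $\leq r$ containing a class-$2$ subgroup of index $\leq f$ --- which would render the entire remainder of the paper (the quasi-isomorphism machinery, Cases 1--3 of Theorem \ref{thm:Carslonconj}, and indeed Carlson's conjecture itself) trivial. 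That is strong evidence the shortcut cannot be repaired by bookkeeping.

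What the paper does instead, and what your proof is missing, is a spectral-sequence stopping argument for the extension $1\to H\to G\to Q\to 1$ itself. In the base case $H\cong C_p$, the subgroup $B$ from Lemma \ref{lemma:CpGQ} (replaced by $H\times B$ if $H\not\leq B$) is used not for its cohomology ring but to produce, via Weigel's Theorem \ref{ThmWeigel}\eqref{ThmWeigeljred}, a class $\eta\in H^2(B;\F_p)$ with $\operatorname{res}^B_H(\eta)\neq 0$; by Carlson's argument (the proof of Lemma 3.2 in \cite{Carlson}) this forces the Lyndon--Hochschild--Serre spectral sequence of the extension to stop by page $2|G:B|+1$, a bound depending only on $p$, $r$, $f$. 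With the stopping page bounded and $E_2=H^*(Q;H^*(C_p;\F_p))$ expressed in terms of the given ring $H^*(Q;\F_p)$, Carlson's Proposition 3.1 then counts the finitely many possible outcomes for $H^*(G;\F_p)$ --- this is the step where the hypothesis ``determined by $H^*(Q;\F_p)$'' actually enters. The general case follows by induction on $|H|$: choosing $H'\unlhd G$ with $H'\leq H$ and $|H:H'|=p$, one first determines $H^*(G/H';\F_p)$ from $H^*(Q;\F_p)$ by the base case, notes via Lemma \ref{lemma:CpGQ} that $G/H'$ again has a class-$2$ subgroup of index at most $p^{4r}|Q:A|$, and applies the inductive hypothesis to $1\to H'\to G\to G/H'\to 1$. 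You should replace your final invocation of Theorem \ref{Car1} with this mechanism.
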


\begin{proof}
If $H\cong C_p$ then, by Lemma \ref{lemma:CpGQ}, there exists a powerful $p$-central subgroup $B$ of $G$ with $\Omega$EP and whose index is bounded in terms of $p$, $r$ and $f$. If $H$ is not contained in $B$ we consider $H\times B$ instead of $B$. In both situations, there exists an element $\eta\in H^2(B,\F_p)$ (resp. $\eta\in H^2(B\times H,\F_p)$) such that $\text{res}^B_H( \eta )$ is non-zero (resp. $\text{res}^{B\times H}_H( \eta )$) (see Theorem \ref{ThmWeigel}\eqref{ThmWeigeljred}). Then the spectral sequence arising from 
\[
1 \to H \to G \to Q \to 1
\]
stops at most at the page $2|G:B|+1$ (cf. \cite[proof of Lemma 3.2]{Carlson}). Now the theorem holds by \cite[Proposition 3.1]{Carlson}.

For general $H$, we proceed by induction on $|H|$. Suppose that the result holds for all the group extensions of the form
\[
1\to H'\to G\to Q,
\] 
where $|H'|<|H|\leq n$, $\rk(G)\leq r$ and with $A\leq Q$ of nilpotency class $2$ and $|Q:A|\leq f$. Choose a subgroup $H'\leq H$ with $H' \unlhd G$ and $|H:H'|=p$. The quotients $G'=G/H'$ and $C_p\cong H/H'$ fit in a short exact sequence
\begin{equation}\label{eq:applyinginductionextensiongroups}
1 \to C_p \to G' \overset{\pi}\to Q \to 1
\end{equation}
and we also have the following extension of groups,
\begin{equation}\label{eq:centralextensioninduction}
1\to H'\to G\to G'\to 1.
\end{equation}
Applying Lemma \ref{lemma:CpGQ} to the extension of groups \eqref{eq:applyinginductionextensiongroups}, we know that $G'$ has a $p$-subgroup $B'$ of nilpotency class two. Moreover, as $\rk(G')\leq \rk(G)$, we have that $|G':B'|\leq p^{4r}|Q:A|$. Also, by the previous case, we have that the cohomology algebra $H^*(G';\F_p)$ is determined up to a finite number of possibilities (depending on $p$, $n$, $r$ and $f$) by the algebra $H^*(Q;\F_p)$.

Now, we may apply the induction hypothesis to the extension \eqref{eq:centralextensioninduction} since $|H'|<|H|$. Then, the cohomology algebra $H^*(G;\F_p)$ is determined up to a finite number of possibilities (depending on $p$, $n$, $r$ and $f$) by the algebra $H^*(G';\F_p)$. In turn, the result holds.

\end{proof}


\section{Cohomology of $p$-groups that split over an abelian subgroup}
\label{section:cohomologyabelian}
Assume that $G$ is a $p$-group that splits over an abelian subgroup $K$ of rank $d$, $K = C_{p^{i_1}} \times \dots \times C_{p^{i_d}}$. In this section we show that if $d<p$ and the action $G/K\to \Aut(K)$ has an integral lifting (defined below), then there are finitely many possibilities for the cohomology ring $H^*(G;\F_p)$ for all infinitely many choices of the exponents $i_l$. Here we are considering cohomology with trivial coefficients in the field of $p$ elements, $\F_p$, and we assume that either $p$ is odd or $p=2$ and $i_l>1$ for all $l$.

\subsection{Cohomology of abelian $p$-groups of small rank}\label{subsection:cohoabeliansmallrank}
Consider the abelian groups $K = C_{p^{i_1}} \times \dots \times C_{p^{i_d}}$ and $K'= C_{p^{j_1}} \times \dots \times C_{p^{j_d}}$ for some fixed rank $0<d$. Then there is an abstract isomorphism of cohomology rings
\begin{equation}\label{eq:abstractisomorphismabeliangroupssamerank}
H^*(K;\F_p)\cong H^*(K';\F_p).
\end{equation}
When $d<p$, we shall realize this isomorphism via quasi-isomorphisms in the category of cochain complexes. To that end, note that both cohomology rings above are isomorphic to the tensor product of a polynomial algebra with an exterior algebra: $\F_p[x_1,\ldots,x_d]\otimes \Lambda(y_1,\ldots,y_d)$, where $\deg(x_l)=2$ and $\deg(y_l)=1$. 

The Pr{\"u}fer $p$-group $C_{p^{\infty}}=\bigcup _{k\geq 1} C_{p^k}$ is an infinite discrete abelian group whose mod-$p$ cohomology ring is polynomial with one generator in degree $2$ (See \cite[V.6.6]{KSBrown82} and recall that $C_{p^{\infty}}$ is divisible). Hence we also have $H^*(C^d_{p^{\infty}};\F_p)= \F_p[x_1,\ldots,x_d]$ with $\deg(x_l)=2$. Moreover, the group inclusion $K \hookrightarrow  C^d_{p^{\infty}}$ induces a cochain map 
\begin{equation}\label{equniversalmapeven}
C^*(C^d_{p^{\infty}};\F_p)\stackrel{\varphi_e}\longrightarrow C^*(K;\F_p)
\end{equation}
that becomes  an isomorphism on reduced cohomology (Definition \ref{defi:reducedcohomology}), i.e, on cohomology modulo the ideal of the nilpotent elements. Note that the map \eqref{equniversalmapeven}  preserve the standard cup products in $C^*(C^d_{p^{\infty}};\F_p)$ and $C^*(K;\F_p)$ (see Subsection \ref{subsection:standardcupproduct}).  

By abusing notation, we denote by $\Lambda(y_1,\ldots,y_d)$ both the exterior algebra (with product denoted by $\cup$) and the cochain complex obtained by equipping it with the $0$ differential. Consider then the following cochain map introduced in \cite[Proof of Proposition 2]{Taelman2014}:
\begin{equation}\label{eq:universalmapodd}
\Lambda(y_1,\ldots,y_d)\stackrel{\varphi_o}\longrightarrow C^*(K;\F_p)
\end{equation}
that on degree $t$, with $0\leq t\leq d$, sends  $y_{l_1}\cdots y_{l_t}$ to the element of $C^t(K;\F_p)$
\[
\frac{1}{t!}\sum_{\sigma\in \Sigma_t} \sgn(\sigma) Y_{l_{\sigma(1)}}\cup \ldots \cup Y_{l_{\sigma(t)}},
\]
where $Y_i$ are the representatives of cohomology classes generating $H^1(K;\F_p)$ defined by 
\begin{equation}\label{eq:generatorsY_i}
Y_i(k_0,k_1)=\overline{(k_1-k_0)_i},
\end{equation}
where $\overline{k_l}$ denotes the image by the reduction $C_{p^{i_l}}\twoheadrightarrow C_p$ of the $l$-th coordinate $k_l$ of $k\in K$. Note that the condition $d<p$ is needed in the definition of this cochain map. When we pass to cohomology we obtain the identity on degree $1$ . Note that $H^*(\varphi_0)$ is a homomorphism of rings because of the equality 
\[
[\frac{1}{t!}\sum_{\sigma\in \Sigma_t} \sgn(\sigma) Y_{l_{\sigma(1)}}\cup \ldots \cup Y_{l_{\sigma(t)}}]=[Y_{l_1}\cup \ldots \cup Y_{l_t}],
\]
which in turn follows from the fact that $[Y_{l_{\sigma(1)}}\cup \ldots \cup Y_{l_{\sigma(t)}}]=\sgn(\sigma)[Y_{l_1}\cup \ldots \cup Y_{l_t}]$ in the graded commutative algebra $H^*(K;\F_p)$. 

\begin{Rmk}
Consider the cochain map defined by fixing an $\F_p$ basis for $\Lambda(y_1,\ldots,y_d)$ and extending linearly the map $y_{l_1}\cdots y_{l_t}\to Y_{l_1}\cup \ldots \cup Y_{l_t}$.
It is simpler and good enough for the purposes of Lemma \ref{lem:chainmapfromuniversal} below. Nevertheless, it is not invariant, a crucial condition that $\varphi_o$ satisfies and that will be needed later on in Lemma \ref{lem:chainmapfromuniversalinvariant}. 
\end{Rmk}

\begin{Defi}
Let $p$ be a prime and let $d<p$. Define $U(p,d)$ as the cochain complex $C^*(C^d_{p^{\infty}};\F_p)\otimes \Lambda(y_1,\ldots,y_d)$.
\end{Defi}

\begin{Lem}\label{lem:chainmapfromuniversal}
For every prime $p$ and any abelian $p$-group $K$ of rank $d<p$ there exists a quasi-isomorphism
\[
U(p,d)\stackrel{\varphi}\longrightarrow C^*(K;\F_p)
\]
that induces an isomorphism of rings $H^*(U(p,d))\cong H^*(K;\F_p)$.
\end{Lem}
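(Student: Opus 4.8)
The plan is to construct the quasi-isomorphism $\varphi$ as the tensor product of the two cochain maps already introduced in the excerpt, namely
\[
\varphi = \varphi_e \otimes \varphi_o \colon C^*(C^d_{p^\infty};\F_p)\otimes \Lambda(y_1,\ldots,y_d) \longrightarrow C^*(K;\F_p)\otimes C^*(K;\F_p) \stackrel{\cup}\longrightarrow C^*(K;\F_p),
\]
where the first arrow is the tensor of the two maps and the second is the standard cup product on the cochain complex $C^*(K;\F_p)$ (Subsection \ref{subsection:standardcupproduct}). Since the target is a differential graded algebra, this composite is a well-defined cochain map; I would check that it respects differentials using that $\varphi_e$ is a cochain map, that $\Lambda(y_1,\ldots,y_d)$ carries the zero differential, and the Leibniz rule for $\cup$. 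This is the natural candidate because $U(p,d)=C^*(C^d_{p^\infty};\F_p)\otimes\Lambda(y_1,\ldots,y_d)$ was defined precisely so that its two tensor factors compute the polynomial and exterior parts of $H^*(K;\F_p)$ respectively.

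The core of the argument is to identify the map induced on cohomology. First I would compute $H^*(U(p,d))$ by the K\"unneth theorem: since $H^*(C^d_{p^\infty};\F_p)=\F_p[x_1,\ldots,x_d]$ is free over $\F_p$ and $\Lambda(y_1,\ldots,y_d)$ has zero differential, we get $H^*(U(p,d))\cong \F_p[x_1,\ldots,x_d]\otimes\Lambda(y_1,\ldots,y_d)$ as graded algebras, the product being the graded tensor product \eqref{equ:productforotimes}. Next, on cohomology $H^*(\varphi)$ sends a class $x^{\underline a}\otimes y_{l_1}\cdots y_{l_t}$ to $H^*(\varphi_e)(x^{\underline a})\cup H^*(\varphi_o)(y_{l_1}\cdots y_{l_t})$. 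The excerpt records exactly what I need about the two factors: $\varphi_e$ is an isomorphism on reduced cohomology, that is, it maps $\F_p[x_1,\ldots,x_d]$ isomorphically onto the polynomial part of $H^*(K;\F_p)$; and $H^*(\varphi_o)$ sends $y_{l_1}\cdots y_{l_t}$ to the class $[Y_{l_1}\cup\cdots\cup Y_{l_t}]$, the generators of the exterior part. Because $H^*(K;\F_p)\cong\F_p[x_1,\ldots,x_d]\otimes\Lambda(y_1,\ldots,y_d)$ with the $x_l$ of degree $2$ and the $Y_l$ of degree $1$, the products of these images form an $\F_p$-basis, so $H^*(\varphi)$ is a bijection in every degree, hence an isomorphism of graded $\F_p$-modules, and it is multiplicative since both $H^*(\varphi_e)$ and $H^*(\varphi_o)$ are ring maps and $\cup$ is the product on the target.

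The step that requires the most care, and which I expect to be the main obstacle, is verifying that $H^*(\varphi)$ is genuinely a ring isomorphism rather than merely an additive one, since the product on $U(p,d)$ carries the Koszul sign from \eqref{equ:productforotimes} while the product on $C^*(K;\F_p)$ is the graded-commutative cup product. Concretely one must confirm that the signs match, i.e.\ that
\[
H^*(\varphi)\bigl((x^{\underline a}\otimes \omega)\cup(x^{\underline b}\otimes \omega')\bigr)
= H^*(\varphi)(x^{\underline a}\otimes\omega)\cup H^*(\varphi)(x^{\underline b}\otimes\omega'),
\]
which reduces to the sign $(-1)^{n'm}$ in \eqref{equ:productforotimes} being compensated by graded-commutativity in $H^*(K;\F_p)$ together with the antisymmetry relation $[Y_{l_{\sigma(1)}}\cup\cdots\cup Y_{l_{\sigma(t)}}]=\sgn(\sigma)[Y_{l_1}\cup\cdots\cup Y_{l_t}]$ quoted above. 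A related subtlety is that the bound $d<p$ is essential throughout: it guarantees that $\varphi_o$ is defined (the factor $\tfrac{1}{t!}$ makes sense only for $t\le d<p$) and that the $y_{l_1}\cdots y_{l_t}$ exhaust the exterior algebra in the expected degrees, so one should make sure the K\"unneth and basis-counting arguments use no degrees beyond $d$. Once these sign and range checks are in place, the conclusion that $\varphi$ is a quasi-isomorphism inducing a ring isomorphism $H^*(U(p,d))\cong H^*(K;\F_p)$ follows directly.
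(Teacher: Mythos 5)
Your proposal is correct and takes essentially the same approach as the paper: the paper's proof consists precisely of defining $\varphi$ as the composite $\cup\circ(\varphi_e\otimes\varphi_o)$ and asserting that the claim follows from the stated properties of $\varphi_e$ and $\varphi_o$. Your K\"unneth computation, basis count, and Koszul-sign check are correct elaborations of the details the paper leaves implicit.
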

\begin{proof}
Define 
\[
U(p,d)=C^*(C^d_{p^{\infty}};\F_p)\otimes \Lambda(y_1,\ldots,y_d)\stackrel{\varphi_e\otimes \varphi_o}\longrightarrow C^*(K;\F_p)\otimes C^*(K;\F_p)\stackrel{\cup}\to C^*(K;\F_p),
\]
where the first arrow is the tensor product of the cochain maps \eqref{equniversalmapeven} and \eqref{eq:universalmapodd} and the second map is the standard cup product (Subsection \ref{subsection:standardcupproduct}). Then the claim follows by the properties of the cochain maps \eqref{equniversalmapeven} and \eqref{eq:universalmapodd}.
\end{proof}

\begin{Cor}\label{cor:abeliansamecohomology}
For every prime $p$ and any two abelian $p$-groups $K$ and $K'$ of rank $d<p$ there exists a zig-zag of quasi-isomorphisms
\[
C^*(K;\F_p)\leftarrow U(p,d)\to C^*(K';\F_p)
\]
that induce isomorphisms of rings $H^*(K;\F_p)\cong H^*(U(p,d))\cong H^*(K';\F_p)$.
\end{Cor}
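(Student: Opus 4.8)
The plan is to invoke Lemma \ref{lem:chainmapfromuniversal} twice, once for each of the two abelian $p$-groups $K$ and $K'$. Since both have the same rank $d<p$, that lemma produces quasi-isomorphisms
\[
C^*(K;\F_p)\xleftarrow{\varphi}U(p,d)\xrightarrow{\varphi'}C^*(K';\F_p),
\]
where the single cochain complex $U(p,d)=C^*(C^d_{p^\infty};\F_p)\otimes\Lambda(y_1,\ldots,y_d)$ depends only on $p$ and $d$ and not on the particular exponents defining $K$ or $K'$. This is exactly the zig-zag asserted in the statement, so the structural content is immediate: the whole point is that $U(p,d)$ is a \emph{common} source for both realizations.

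The only thing left to verify is the claim about the induced ring isomorphisms. Each of $\varphi$ and $\varphi'$ induces an isomorphism of \emph{rings} $H^*(U(p,d))\cong H^*(K;\F_p)$ and $H^*(U(p,d))\cong H^*(K';\F_p)$ respectively, again by Lemma \ref{lem:chainmapfromuniversal}, since that lemma already records that its quasi-isomorphism is multiplicative on cohomology. Composing the first isomorphism with the inverse of the second yields the abstract ring isomorphism $H^*(K;\F_p)\cong H^*(K';\F_p)$ of \eqref{eq:abstractisomorphismabeliangroupssamerank}, now explicitly realized through $H^*(U(p,d))$.

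There is essentially no obstacle here: all of the real work has been pushed into Lemma \ref{lem:chainmapfromuniversal}, where the map $\varphi$ is built as the composite of $\varphi_e\otimes\varphi_o$ with the cup product and shown to be a multiplicative quasi-isomorphism using the properties of \eqref{equniversalmapeven} and \eqref{eq:universalmapodd}. The corollary is therefore a formal consequence obtained by applying that lemma to two groups of equal rank; the hypothesis $d<p$ is needed only insofar as it is the hypothesis of the lemma (it enters through the division by $t!$ in the definition of $\varphi_o$). Thus the proof amounts to writing down the two instances of the lemma and noting that their common domain furnishes the desired zig-zag and ring isomorphisms.
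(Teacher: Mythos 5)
Your proposal is correct and coincides with the paper's own (implicit) argument: the corollary is stated there without separate proof precisely because it follows by applying Lemma \ref{lem:chainmapfromuniversal} once to $K$ and once to $K'$, the key point being that the source $U(p,d)$ depends only on $p$ and $d$. Your observation that the multiplicativity on cohomology is already recorded in the lemma, and that $d<p$ enters only through the $1/t!$ in $\varphi_o$, matches the paper exactly.
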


Note that this zig-zag realizes the ring isomorphism \eqref{eq:abstractisomorphismabeliangroupssamerank}.
\subsection{Cohomology of $p$-groups that split over an abelian $p$-group of small rank}
\label{subsection:cohomologysmallrank}
Suppose  that $P$ is a $p$-group acting on the abelian $p$-group $K=C_{p^{i_1}} \times \dots \times C_{p^{i_d}}$ via $P\stackrel{\alpha}\to \Aut(K)$.  Set $R$ to be the subring of $\End(\Z^d)=\M_d(\Z)$ consisting of integral matrices $A=(a_{n,m})$ such that each entry $a_{n,m}$ is divisible by $\max(p^{i_n-i_m},1)$. Then there is a surjective ring homomorphism $w\colon R\to \End(K)$  \cite{HillarRhea2007}. Moreover, if $A\in R$, then
\begin{equation}\label{equ:walpha}
w(A)(\pi(n_1,\ldots,n_d))=\pi(\sum_l a_{1,l}n_l,\ldots,\sum_l a_{d,l}n_l)=(\sum_l a_{1,l}\overline{n_l},\ldots,\sum_l a_{d,l}\overline{n_l}),
\end{equation}
where $\pi$ is the surjection $\Z^d\twoheadrightarrow K$ that takes $(n_1,\ldots,n_r)$ to the element $\pi(n_1,\ldots,n_d)=(\overline{n_1},\ldots,\overline{n_d})$ obtained by reducing modulo $p^{i_l}$ the $l$-th coordinate. For instance, if $i=i_1=\ldots=i_d$, then $w$ takes the matrix $A$ to its $C_{p^i}$ reduction in $\M_d(C_{p^i})=\End(K)$.

\begin{Defi}\label{defi:integrallifting}
We say that the homomorphism $P\stackrel{\tilde \alpha}\to R$ is an \emph{integral lifting} of the action $P\stackrel{\alpha}\to \Aut(K)$ if the following diagram commutes,
\[
\xymatrix{
P\ar[d]_{\alpha}\ar[r]^{\tilde\alpha}& R\ar[d]^w\\
\Aut(K)\ar@{^(->}[r]& \End(K),
}
\]
where the bottom horizontal arrow is the inclusion. Note that, as $P$ is a group, the image of $\tilde\alpha$ lies in the group of units of $R$. 
\end{Defi}

\begin{Rmk}\label{rmk:PactsonB_*andC^*}
The group $P$ acts on the chain complex $B_*(K;\F_p)$ via $p \cdot (g_0, \dots, g_n)=(p\cdot g_0, \dots,p\cdot g_n)$, where $p\in P$ and $(g_0, \dots, g_n)\in B_n(K;\F_p)$. There is also an action  of $P$ on $C^*(K;\F_p)$ given by  $(p\cdot f)(g)=f(p^{-1}\cdot g)$, where $p\in P$, $g \in B_n(K;\F_p)$ and $f \in C^n(K;\F_p)$. 
\end{Rmk}

The next result is a $P$-invariant version of Lemma \ref{lem:chainmapfromuniversal}.

\begin{Lem}\label{lem:chainmapfromuniversalinvariant}
Let $p$ be a prime, let $K$ be an abelian $p$-group of rank $d<p$ and let $P$ be a $p$-group that acts on $K$ via $P\to \Aut(K)$. If the action has an integral lifting then $P$ acts on $U(p,d)$ and the quasi-isomorphism
\[
U(p,d)\to C^*(K;\F_p)
\]
is $P$-invariant.
\end{Lem}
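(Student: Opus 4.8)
The plan is to put $P$-actions on the two tensor factors $C^*(C^d_{p^{\infty}};\F_p)$ and $\Lambda(y_1,\ldots,y_d)$ of $U(p,d)$, to check that the two building blocks $\varphi_e$ of \eqref{equniversalmapeven} and $\varphi_o$ of \eqref{eq:universalmapodd} are each $P$-equivariant, and then to assemble these facts. Recall that $P$ acts on $C^*(K;\F_p)$ through the group automorphisms $\alpha(p)$ as in Remark \ref{rmk:PactsonB_*andC^*}, and that the standard cup product is natural, so that $p\cdot(f\cup f')=(p\cdot f)\cup(p\cdot f')$. Hence, once both factors carry $P$-actions and both $\varphi_e,\varphi_o$ are equivariant, the composite $\varphi=\cup\circ(\varphi_e\otimes\varphi_o)$ of Lemma \ref{lem:chainmapfromuniversal} is automatically $P$-equivariant for the diagonal action on $U(p,d)$ (which is by cochain automorphisms, as $\Lambda(y_1,\ldots,y_d)$ carries the zero differential). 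So everything reduces to the two factors.

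For the even factor, write $\tilde\alpha(p)=(a_{n,m})\in R$ and set $D=\operatorname{diag}(p^{-i_1},\ldots,p^{-i_d})$, so that the coordinatewise inclusion $\iota\colon K\hookrightarrow C^d_{p^{\infty}}$, under the identifications $C_{p^{i_l}}=p^{-i_l}\Z/\Z\subseteq \Z[1/p]/\Z=C_{p^{\infty}}$ and $K=\Z^d/(p^{i_1}\Z\oplus\cdots\oplus p^{i_d}\Z)$, is given by $\iota(\pi(v))=Dv+\Z^d$. I would equip $C^d_{p^{\infty}}$ with the $P$-action by the matrices $B(p):=D\,\tilde\alpha(p)\,D^{-1}$, whose $(n,m)$-entry is $p^{i_m-i_n}a_{n,m}$. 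The crucial point, which I expect to be the main obstacle, is that these matrices are integral: this is clear when $i_m\ge i_n$, and when $i_m<i_n$ it holds precisely because $a_{n,m}$ is divisible by $p^{i_n-i_m}$, which is exactly the divisibility condition defining $R$. Thus $B(p)\in\GL_d(\Z)$ (with $B(p)^{-1}=D\,\tilde\alpha(p)^{-1}D^{-1}$ integral by the same argument, since $\tilde\alpha(p)^{-1}\in R$), so $B\colon P\to\GL_d(\Z)$ is a homomorphism and $P$ acts on $C^d_{p^{\infty}}=(\Z[1/p]/\Z)^d$, hence on $B_*(C^d_{p^{\infty}};\F_p)$ and on $C^*(C^d_{p^{\infty}};\F_p)$. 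By construction $B(p)\circ D=D\circ\tilde\alpha(p)$, and since $\alpha(p)=w(\tilde\alpha(p))$ is the endomorphism of $K$ induced by $\tilde\alpha(p)$ via \eqref{equ:walpha}, one gets $\iota\circ\alpha(p)=B(p)\circ\iota$; that is, $\iota$ is $P$-equivariant. Consequently $\varphi_e=\iota^*$ is $P$-equivariant.

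For the odd factor, I would first record the cochain-level transformation of the generators \eqref{eq:generatorsY_i}: using $(p\cdot f)(g)=f(p^{-1}\cdot g)$ together with \eqref{equ:walpha} gives $p\cdot Y_n=\sum_m \overline{(\tilde\alpha(p)^{-1})_{n,m}}\,Y_m$, a linear substitution of the $Y_n$ by the $\bmod\,p$ reduction of $\tilde\alpha(p)^{-1}$. I then define the $P$-action on $\Lambda(y_1,\ldots,y_d)$ by the matching substitution $p\cdot y_n=\sum_m \overline{(\tilde\alpha(p)^{-1})_{n,m}}\,y_m$ on generators, extended multiplicatively; this is a genuine left action because $p\mapsto\overline{\tilde\alpha(p^{-1})}$ is an anti-homomorphism $P\to\GL_d(\F_p)$. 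With these two actions in place, $\varphi_o$ is $P$-equivariant: since the $P$-action on $C^*(K;\F_p)$ preserves cup products and $\varphi_o$ sends $y_{l_1}\cdots y_{l_t}$ to $\tfrac{1}{t!}\sum_{\sigma\in\Sigma_t}\sgn(\sigma)\,Y_{l_{\sigma(1)}}\cup\cdots\cup Y_{l_{\sigma(t)}}$, both $\varphi_o(p\cdot\omega)$ and $p\cdot\varphi_o(\omega)$ are obtained by substituting the transformed generators into one and the same multilinear alternating expression, and hence coincide by multilinearity. (This is exactly the place where the invariant choice of $\varphi_o$, rather than the simpler but non-invariant map discussed in the earlier remark, is essential.) Combining the equivariance of $\varphi_e$, of $\varphi_o$, and of the cup product as explained in the first paragraph shows that $\varphi$ is $P$-invariant, which proves the lemma.
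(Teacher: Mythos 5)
Your proof is correct and follows essentially the same route as the paper's: the same decomposition of $U(p,d)$ into the even factor $C^*(C^d_{p^{\infty}};\F_p)$ and the odd factor $\Lambda(y_1,\ldots,y_d)$, the same action on the odd factor via the mod-$p$ reduction of $\tilde\alpha(p^{-1})$ (the paper's Equation \eqref{equ:PactsonY_i}), and the same separate equivariance checks for $\varphi_e$ and $\varphi_o$ followed by compatibility with the cup product. Your one refinement is on the even factor, where you realize the $P$-action on $C^d_{p^{\infty}}$ explicitly as $B(p)=D\,\tilde\alpha(p)\,D^{-1}$ and verify its integrality from the divisibility condition defining $R$; this is precisely what is needed for the inclusion $K\hookrightarrow C^d_{p^{\infty}}$ to be equivariant when the exponents $i_l$ are not all equal, a point the paper passes over in one line (its coordinatewise formula for the action should be read as this conjugated action, since the plain matrix action of $\tilde\alpha(p)$ does not preserve the embedded $K$ for unequal exponents), so your write-up makes explicit exactly the step the paper leaves implicit.
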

\begin{proof}
Recall that $U(p,d)=C^*(C^d_{p^{\infty}};\F_p)\otimes \Lambda(y_1,\ldots,y_d)$ and consider the integral lifting $\tilde\alpha\colon P\to R\subseteq \M_d(\Z)$. We shall show that $P$ acts on $C^*(C^d_{p^{\infty}};\F_p)$ and on $\Lambda(y_1,\ldots,y_d)$ and hence diagonally on $U(p,d)$. Choose $p\in P$ and set $\tilde\alpha(p)=A=(a_{n,m})$. Then $A$ acts on $C^d_{p^{\infty}}$ via
\[
p\cdot (m_1,\ldots,m_d)= (\sum_l a_{1,l}m_l,\ldots,\sum_l a_{d,l}m_l),
\]
on $B_n(C^d_{p^{\infty}};\F_p)$ via $p\cdot (g_0,\ldots,g_n)= (p\cdot g_0,\ldots, p\cdot g_n)$ and on $f\in C^n(C^d_{p^{\infty}};\F_p)$ via $(p\cdot f)(g)=f(p^{-1}\cdot g)$. With these definitions, the cochain map \eqref{equniversalmapeven} $C^*(C^d_{p^{\infty}};\F_p)\to C^*(K;\F_p)$ is $P$-invariant because, by \eqref{equ:walpha}, the inclusion $K\hookrightarrow C^d_{p^{\infty}}$ is $P$-invariant. 

For the action of $p$ on $c_1y_1+\ldots+c_dy_d\in \Lambda^1(y_1,\ldots,y_d)$, where $c_l\in \F_p$, write $\tilde\alpha(p^{-1})=B=(b_{n,m})$ and set
\[
p\cdot (c_1y_1+\ldots+c_dy_d)= (\sum_l b_{l,1}c_l)y_1+ \ldots+(\sum_l b_{l,d}c_l)y_d.
\]
Observe that this action extends to $\Lambda(y_1,\ldots,y_d)$. Moreover, on $C^*(K;\F_p)$ we also have that
\begin{equation}\label{equ:PactsonY_i}
p\cdot (c_1Y_1+\ldots+c_dY_d)= (\sum_l b_{l,1}c_l)Y_1+ \ldots+(\sum_l b_{l,d}c_l)Y_d.
\end{equation}
Then it turns out after a routine computation that the cochain map \eqref{eq:universalmapodd} is $P$-invariant.
\end{proof}

Next we present the main result of this section.

\begin{Prop}\label{prop:commonintegralliftingsamecohomology}
Let $p$ be a prime and let $\{G_i=K_i\rtimes P\}_{i\in I}$ be a family of groups such that $K_i$ is abelian of fixed rank $d<p$ for all $i$ and that all actions of $P$ have a common integral lifting. Then the following holds:
\begin{enumerate}
\item The graded $\F_p$-modules $H^*(G_i;\F_p)$ and $H^*(G_{i'};\F_p)$ are isomorphic for all $i,i'$.
\item There is a filtration of $H^*(G_i;\F_p)$ for each $i$ and the associated bigraded algebras are isomorphic for all $i$.
\item There are finitely many isomorphism types of rings in the collection of cohomology rings $\{H^*(G_i;\F_p)\}_{i\in I}$.
\end{enumerate}
\end{Prop}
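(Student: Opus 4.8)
The plan is to leverage the machinery built up in Sections~2 and~4, combining the $P$-invariant quasi-isomorphism of Lemma~\ref{lem:chainmapfromuniversalinvariant} with the semidirect-product resolution of Subsection~\ref{subsection:resolution-semidirect-product} and the spectral-sequence lemmas of Subsection~\ref{subsection:somehomalg}. First I would fix the common integral lifting $\tilde\alpha\colon P\to R$ and observe that, precisely because the lifting is common, the induced $P$-action on $U(p,d)=C^*(C^d_{p^\infty};\F_p)\otimes\Lambda(y_1,\ldots,y_d)$ is \emph{the same} for every index $i\in I$, independent of the exponents defining $K_i$. This is the crucial uniformity: the cochain complex $U(p,d)$ together with its $P$-action is a single fixed object, whereas the target complexes $C^*(K_i;\F_p)$ vary with $i$.

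For each $i$, Lemma~\ref{lem:chainmapfromuniversalinvariant} gives a $P$-invariant quasi-isomorphism $\varphi_i\colon U(p,d)\to C^*(K_i;\F_p)$ of differential graded $kP$-algebras inducing a ring isomorphism on cohomology. To pass from the abelian factor $K_i$ to the semidirect product $G_i=K_i\rtimes P$, I would use the standard resolution $B_*(P;\F_p)$ and form the double complexes $\tot(\Hom_{\F_p P}(B_*(P;\F_p),U(p,d)))$ and $\tot(\Hom_{\F_p P}(B_*(P;\F_p),C^*(K_i;\F_p)))$. By the discussion in Subsection~\ref{subsection:resolution-semidirect-product}, the latter computes $H^*(G_i;\F_p)$ both as a graded module and, via Equation~\eqref{equ:productforHom}, as a ring. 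Applying Lemma~\ref{lemma:homalg} to $\varphi_i$ (with $G$ there taken to be $P$, which acts on the free resolution $B_*(P;\F_p)$) yields an isomorphism of graded $\F_p$-modules
\[
H^*(G_i;\F_p)\cong H^*\bigl(\tot(\Hom_{\F_p P}(B_*(P;\F_p),U(p,d)))\bigr),
\]
and since the right-hand side does not depend on $i$, statement~(1) follows: all the $H^*(G_i;\F_p)$ are isomorphic as graded modules to one fixed module. For statement~(2), I would invoke Lemma~\ref{lemma:homalgalgebra}, whose three hypotheses hold here: $P_*=B_*(P;\F_p)$ is the standard resolution, both $U(p,d)$ and $C^*(K_i;\F_p)$ carry products, and $H^*(\varphi_i)$ preserves them by Lemma~\ref{lem:chainmapfromuniversal}. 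This gives, for each $i$, a filtration of $H^*(G_i;\F_p)$ whose associated bigraded algebra is isomorphic to the fixed bigraded algebra coming from $U(p,d)$; hence these bigraded algebras agree for all $i$.

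Finally, statement~(3) is deduced from (2) by Carlson's Theorem~\ref{Car2}: each $H^*(G_i;\F_p)$ is a finitely generated graded-commutative $\F_p$-algebra, and its ring structure is determined, up to finitely many possibilities, by the associated bigraded algebra $S$. Since by (2) this bigraded algebra is one and the same $S$ for all $i$, the collection $\{H^*(G_i;\F_p)\}_{i\in I}$ falls into finitely many isomorphism types of rings. The main obstacle I anticipate is not any single step but the bookkeeping needed to ensure the two uses of the spectral-sequence lemmas are legitimate: one must check that $\varphi_i$ is genuinely a map of \emph{$kP$-module} complexes (so that post-composition descends to the $E_1$-pages as in Lemma~\ref{lemma:homalg}) and that the products are compatible with the column filtration, so that Lemma~\ref{lemma:homalgalgebra} applies verbatim. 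Both are guaranteed by the $P$-invariance in Lemma~\ref{lem:chainmapfromuniversalinvariant} and by the explicit product formula~\eqref{equ:productforHom}, but verifying the compatibility of $\varphi_i$ with these structures uniformly in $i$ is where the care lies.
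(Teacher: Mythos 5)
Your proposal is correct and follows essentially the same route as the paper: a zig-zag through the single fixed complex $U(p,d)$ with its $P$-action, two applications of Lemma \ref{lemma:homalg} for the graded-module statement, Lemma \ref{lemma:homalgalgebra} with the product \eqref{equ:productforHom} for the bigraded-algebra statement, and Theorem \ref{Car2} to conclude finiteness. One cosmetic caveat: $\varphi_i$ is not claimed to be a map of differential graded algebras (the paper never asserts multiplicativity at the cochain level), but this is harmless since, as you note, Lemma \ref{lemma:homalgalgebra} only needs $H^*(\varphi_i)$ to preserve products.
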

\begin{proof}
Choose $i_0\in I$ and set $G=G_{i_0},K=K_{i_0}$. Then take any $i\in I$ and let $G'=G_i,K'=K_i$. By Subsection \ref{subsection:resolution-semidirect-product}, we have $H^*(G;\F_p)\cong H^*(\tot(D_C))$,  where $D_C$ is the double complex \[
D_C^{*,*}=\Hom_{\F_pP}(B_*(P;\F_p), C^*(K;\F_p)).
\]
Similarly, we have $H^*(G';\F_p)=H^*(\tot(D_{C'}))$ for $D_{C'}$ the analogous double complex. Now, by Lemma \ref{lem:chainmapfromuniversalinvariant}, there exists a zig-zag of $P$-invariant quasi-isomorhpisms 
\[
C^*(K;\F_p)\stackrel{\varphi}\leftarrow U(p,d)\stackrel{\varphi'}\to C^*(K';\F_p).
\]
Then two applications of Lemma \ref{lemma:homalg} gives immediately that the maps
\[
\tot(D_C)\stackrel{\tot(\varphi_*)}\longleftarrow \tot(D_U)\stackrel{\tot(\varphi'_*)}\longrightarrow \tot(D_{C'})
\]
are both quasi-isomorphisms, where $D^{*,*}_U=\Hom_{\F_pP}(B_*(P;\F_p),U^*(p,d))$. In particular, $H^*(G;\F_p)\cong H^*(G';\F_p)$ as graded $\F_p$-vector spaces. Moreover, we can endow $\tot(D_C)$, $\tot(D_{C'})$ and $\tot(D_U)$ with a product by Equation \eqref{equ:productforHom}. Then, by Lemma \ref{lemma:homalgalgebra}, there are filtrations of $H^*(\tot(D_C))$, $H^*(\tot(D_U))$ and $H^*(\tot(D_{C'}))$ with isomorphic associated bigraded algebras. Indeed, by Subsection \ref{subsection:resolution-semidirect-product}, the products in $H^*(\tot(D_C))\cong H^*(G;\F_p)$ and 
$H^*(\tot(D_{C'}))\cong H^*(G';\F_p)$ are the usual cup products for the cohomology rings of $G$ and $G'$. Finally, the product in the corresponding bigraded algebras is induced by the products in $H^*(G;\F_p)$ and $H^*(G';\F_p)$. Now, by Theorem \ref{Car2}, there are finitely many possibilities for the rings $H^*(G;\F_p)$ and $H^*(G';\F_p)$.
\end{proof}

\subsection{Cohomology of $p$-groups of unbounded rank.}
\label{subsection:cohomologyunboundedrank}
In this subsection, we generalize the last two subsections to a situation where the abelian subgroup has unbounded rank. So let  $K = C_{p^{i_1}} \times \dots \times C_{p^{i_d}}$ with $d\leq p-1$ and consider the $n$-fold direct product  $L=K\times \ldots\times K$. Considering $L$ as an iterated semi-direct product with trivial action, we obtain from Subsection \ref{subsection:resolution-semidirect-product} that the cohomology of the following cochain complex,
\[
C^*_\times(L;\F_p)=\Hom_{\F_pL}(B_*(K;\F_p)\otimes\ldots\otimes B_*(K;\F_p),\F_p),
\] 
is exactly $H^*(L;\F_p)$. More explicitly, the action of $L$ is given by 
\[
(k_1,\ldots,k_n)\cdot (z_1\otimes \ldots \otimes z_n)=k_1\cdot z_1\otimes \ldots \otimes k_n\cdot z_n,
\]
where $k_i\in K$, $z_i\in B_*(K;\F_p)$ and the action of $K$ on $B_*(K;\F_p)$ was defined in Subsection \ref{subsection:standardresolution}. Moreover, there is a product on $C^*_\times(L;\F_p)$ which induces the usual cup product in $H^*(L;\F_p)$. Here it is the generalization of Lemma \ref{lem:chainmapfromuniversal}. 

\begin{Lem}\label{lem:chainmapfromuniversalunbounded}
There exists a cochain complex $U(p,d,n)$ and a cochain map 
\[
U(p,d,n)\stackrel{\phi}\longrightarrow C^*_\times(L;\F_p)
\]
that induces an isomorphism of rings $H^*(U(p,d,n))\cong H^*(L;\F_p)$.
\end{Lem}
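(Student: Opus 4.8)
The plan is to build $U(p,d,n)$ as an $n$-fold tensor product of the single-factor universal complex $U(p,d)$ from Lemma \ref{lem:chainmapfromuniversal}, and to produce $\phi$ as an $n$-fold tensor power of the quasi-isomorphism $\varphi\colon U(p,d)\to C^*(K;\F_p)$ followed by the multiplications that assemble the factors. Concretely, I would set
\[
U(p,d,n)=\underbrace{U(p,d)\otimes\ldots\otimes U(p,d)}_{n},
\]
which is a differential graded algebra by the tensor-product rule \eqref{equ:productforotimes}, and define $\phi$ as the composite
\[
U(p,d)^{\otimes n}\stackrel{\varphi^{\otimes n}}\longrightarrow C^*(K;\F_p)^{\otimes n}\longrightarrow C^*_\times(L;\F_p),
\]
where the second arrow is the iterated cup product coming from the semidirect-product (here trivial action) structure described in Subsection \ref{subsection:resolution-semidirect-product}.

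First I would verify that $\phi$ is a morphism of differential graded algebras: the first map $\varphi^{\otimes n}$ is a tensor product of ring-preserving cochain maps, and the second is the product map on $C^*_\times(L;\F_p)$, which by the discussion preceding the lemma induces the usual cup product on $H^*(L;\F_p)$. Next I would compute the induced map on cohomology. Since each $\varphi$ induces a ring isomorphism $H^*(U(p,d))\cong H^*(K;\F_p)$ by Lemma \ref{lem:chainmapfromuniversal}, and since cohomology commutes with tensor products of complexes of $\F_p$-modules via the Künneth theorem (all coefficients are in a field, so there are no Tor terms), we get
\[
H^*(U(p,d,n))\cong H^*(U(p,d))^{\otimes n}\cong H^*(K;\F_p)^{\otimes n}\cong H^*(L;\F_p),
\]
the last isomorphism being the Künneth isomorphism for the $n$-fold direct product $L=K\times\ldots\times K$. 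Tracing through, this composite is exactly $H^*(\phi)$, so it is a ring isomorphism.

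The main obstacle I anticipate is bookkeeping the signs and the identification of products, rather than any conceptual difficulty. One must check that the product on $C^*_\times(L;\F_p)$ assembled from the iterated semidirect-product construction agrees, under Künneth, with the graded tensor-product multiplication on $H^*(K;\F_p)^{\otimes n}$ governed by the Koszul rule \eqref{equ:productforotimes}; this is where the sign conventions fixed in Subsection \ref{subsection:notationsandsignconventions} must be invoked carefully, and where the criterion \cite[XII.10.4]{SMacLane63} cited there guarantees that the cochain-level product induces the genuine cup product. I would therefore phrase the proof so that the ring structure is transported through a chain of maps each already known to preserve products, reducing the verification to the single-factor case handled in Lemma \ref{lem:chainmapfromuniversal} together with the compatibility of Künneth with cup products. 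Granting that compatibility, the statement follows immediately.
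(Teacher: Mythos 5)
Your proposal is correct and takes essentially the same route as the paper: both assemble the single-factor even and odd maps of Lemma \ref{lem:chainmapfromuniversal} by tensoring, with the K\"unneth isomorphism over $\F_p$ and the external product (which for the finite factors is even an isomorphism of complexes, since each $B_m(K;\F_p)$ is a finitely generated free $\F_pK$-module) supplying the ring isomorphism on cohomology. The only difference is packaging: the paper sets $U(p,d,n)=C^*_\times(L_\infty;\F_p)\otimes\bigl(\bigotimes^n\Lambda(y_1,\ldots,y_d)\bigr)$, keeping the divisible-group part assembled at the resolution level instead of your tensor power $C^*(C^d_{p^\infty};\F_p)^{\otimes n}$ --- a quasi-isomorphic variant chosen with the $Q$-equivariance needed in Proposition \ref{prop:commonintegralliftingsamecohomologyunbounded} in mind, which your $U(p,d)^{\otimes n}$ supports equally well via the signed permutation action.
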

\begin{proof}
Consider the analogous construction for $L_\infty=\stackrel{n}\times C^d_{p^{\infty}}$, i.e., 
\[
C^*_\times(L_\infty;\F_p)=\Hom_{\F_pL_\infty}(B_*(C^d_{p^{\infty}};\F_p)\otimes\ldots\otimes B_*(C^d_{p^{\infty}};\F_p),\F_p).
\]
Then there is a cochain map $\phi_e\colon C^*_\times(L_\infty;\F_p)\to C^*_\times(L;\F_p)$ induced by the $n$-fold tensor product of the group inclusion $K\hookrightarrow C^d_{p^\infty}$. This map  becomes  an isomorphism on reduced cohomology. Next, there are representatives of the generators of $H^1(L;\F_p)$ of the form $1\otimes\ldots\otimes Y_i\otimes 1\ldots\otimes 1$, where $Y_i$ was defined in Equation \eqref{eq:generatorsY_i}. Now consider the cochain map $\phi_o\colon \stackrel{n}\otimes \Lambda(y_1,\ldots,y_d)\to C^*_\times(L;\F_p)$ given by $\phi_o=\stackrel{n}\otimes \varphi_o$ and where $\varphi_o$ was defined in Equation \eqref{eq:universalmapodd}. Finally, the cochain map
\[
U(p,d,n)=C^*_\times(L_\infty;\F_p)\bigotimes \stackrel{n}\otimes \Lambda(y_1,\ldots,y_d)\stackrel{\phi_e\otimes \phi_o}\longrightarrow C^*_\times(L;\F_p)\otimes C^*_\times(L;\F_p)\stackrel{\cup}\to C^*_\times(L;\F_p),
\]
induces an isomorphism of cohomology rings.
\end{proof}

Now we consider a semidirect product $L\rtimes Q$, where $Q$ is a subgroup of $P\wr S$. Here $P\stackrel{\alpha}\to \Aut(K)$ acts on $K$, $S$ is a subgroup of the symmetric group $\Sigma_n$ and the action of $Q$ on $L$ is given by
\[
(p_1,\ldots,p_n,\sigma)\cdot (k_1,\ldots,k_n)=(p_1\cdot k_{\sigma^{-1}(1)},\ldots,p_n\cdot k_{\sigma^{-1}(n)}).
\]
Next we prove a generalization of Proposition \ref{prop:commonintegralliftingsamecohomology}.

\begin{Prop}\label{prop:commonintegralliftingsamecohomologyunbounded}
Let $p$ be a prime and let $\{G_i=L_i\rtimes Q\}_{i\in I}$ be a family of groups such that $L_i=K_i\times \ldots\times K_i$, $K_i$ is abelian of fixed rank $d<p$, $Q\leq P\wr S$ and all  actions of $P$ have a common integral lifting. Then the following holds:
\begin{enumerate}
\item The graded $\F_p$-modules $H^*(G_i;\F_p)$ and $H^*(G_{i'};\F_p)$ are isomorphic for all $i,i'$.
\item There is a filtration of $H^*(G_i;\F_p)$ for each $i$ and the associated bigraded algebras are isomorphic for all $i$.\label{prop:commonintegralliftingsamecohomologyunboundedbigradedalgebras}
\item There are finitely many isomorphism types of rings in the collection of cohomology rings $\{H^*(G_i;\F_p)\}_{i\in I}$.\label{prop:commonintegralliftingsamecohomologyunboundedcohorings}
\end{enumerate}
\end{Prop}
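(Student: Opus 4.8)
The plan is to mirror the proof of Proposition \ref{prop:commonintegralliftingsamecohomology}, replacing the single semidirect product by the wreath-product setting and the abelian group $K$ by the $n$-fold product $L=K\times\cdots\times K$. First I would fix $i_0\in I$ and an arbitrary $i\in I$, writing $G=L_{i_0}\rtimes Q$, $G'=L_i\rtimes Q$, $L=L_{i_0}$, $L'=L_i$. Since $Q\leq P\wr S$ acts on $L$, Subsection \ref{subsection:resolution-semidirect-product} gives $H^*(G;\F_p)\cong H^*(\tot(D_C))$ for the double complex $D_C^{*,*}=\Hom_{\F_pQ}(B_*(Q;\F_p),C^*_\times(L;\F_p))$, with the analogous description for $G'$. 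So the task reduces to producing a zig-zag of \emph{$Q$-invariant} quasi-isomorphisms between $C^*_\times(L;\F_p)$ and $C^*_\times(L';\F_p)$ that respects products on cohomology, and then invoking Lemmas \ref{lemma:homalg} and \ref{lemma:homalgalgebra} together with Theorem \ref{Car2} exactly as before.

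The heart of the argument is therefore to upgrade the map $\phi$ of Lemma \ref{lem:chainmapfromuniversalunbounded} to a $Q$-invariant map, just as Lemma \ref{lem:chainmapfromuniversalinvariant} upgraded Lemma \ref{lem:chainmapfromuniversal}. Here the common integral lifting $\tilde\alpha\colon P\to R\subseteq\M_d(\Z)$ makes each factor map $C^*(C^d_{p^\infty};\F_p)\to C^*(K;\F_p)$ and $\Lambda(y_1,\ldots,y_d)\to C^*(K;\F_p)$ invariant under $P$, by Lemma \ref{lem:chainmapfromuniversalinvariant}. The new ingredient is the symmetric group: an element $(p_1,\ldots,p_n,\sigma)\in P\wr S$ acts on $L$ by applying $p_j$ in each slot and then permuting the $n$ factors by $\sigma$. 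Since $U(p,d,n)=C^*_\times(L_\infty;\F_p)\otimes\bigotimes^n\Lambda(y_1,\ldots,y_d)$ is built as an $n$-fold tensor construction, I would let $P\wr S$ act by letting each $p_j$ act on the $j$-th tensor slot (via $\tilde\alpha$, using the formulas of Lemma \ref{lem:chainmapfromuniversalinvariant}) and letting $\sigma\in S$ permute the $n$ slots, with the appropriate Koszul signs coming from the graded tensor product. Because $\phi$ is the $n$-fold tensor of the invariant single-factor maps followed by the cup product, and the cup product on $C^*_\times(L;\F_p)$ is $Q$-equivariant, the map $\phi$ is $Q$-invariant; restricting the $P\wr S$-action to $Q$ gives what we need. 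Running this for both $L$ and $L'$ produces the required $Q$-invariant zig-zag $C^*_\times(L;\F_p)\leftarrow U(p,d,n)\to C^*_\times(L';\F_p)$.

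With the $Q$-invariant zig-zag in hand, two applications of Lemma \ref{lemma:homalg} with $P_*=B_*(Q;\F_p)$ show that
\[
\tot(D_C)\longleftarrow \tot(D_U)\longrightarrow \tot(D_{C'})
\]
are quasi-isomorphisms, where $D_U^{*,*}=\Hom_{\F_pQ}(B_*(Q;\F_p),U(p,d,n))$; this yields part (1), the isomorphism of graded $\F_p$-modules. For part (2), I would equip all three total complexes with the product of Equation \eqref{equ:productforHom} and apply Lemma \ref{lemma:homalgalgebra}, noting that its hypothesis \eqref{ThmWeigeljred}-style condition — that $H^*(\phi)$ preserves products — holds because $\phi$ induces a ring isomorphism by Lemma \ref{lem:chainmapfromuniversalunbounded}; this gives filtrations of $H^*(G;\F_p)$ and $H^*(G';\F_p)$ with isomorphic associated bigraded algebras, and by Subsection \ref{subsection:resolution-semidirect-product} these products are the genuine cup products. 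Finally, part (3) follows from Theorem \ref{Car2}, since the bigraded algebra associated to the common filtration determines the ring $H^*(G_i;\F_p)$ up to finitely many possibilities.

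The main obstacle I anticipate is verifying the $Q$-invariance of $\phi$ through the symmetric-group factor: one must check that permuting the $n$ tensor slots on $U(p,d,n)$ is intertwined by $\phi$ with the permutation action on $C^*_\times(L;\F_p)$, and in particular that the Koszul signs attached to permuting odd-degree exterior generators on the source match the signs produced by the $S$-action on the target cochains. This is a bookkeeping verification rather than a conceptual difficulty, but it is where the wreath-product structure genuinely enters and where an error in sign conventions would break equivariance; everything else is a faithful transcription of the bounded-rank argument.
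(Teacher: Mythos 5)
Your proposal is correct and takes essentially the same route as the paper: the paper likewise reduces everything to making the map $\phi$ of Lemma \ref{lem:chainmapfromuniversalunbounded} $Q$-invariant, by letting $Q$ act on the $n$-fold tensor factors (each $p_j$ acting via the common integral lifting, $\sigma$ permuting the slots with Koszul signs) and diagonally on $U(p,d,n)$, and then repeats the bounded-rank argument of Proposition \ref{prop:commonintegralliftingsamecohomology} via Lemmas \ref{lemma:homalg} and \ref{lemma:homalgalgebra} and Theorem \ref{Car2}. The sign bookkeeping you single out as the main obstacle is exactly the point the paper settles by choosing $\epsilon$ as for wreath products in \cite[p.~49]{LEvens91}.
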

\begin{proof}
The proof is identical to that of Proposition \ref{prop:commonintegralliftingsamecohomology} once we prove  that the cochain map $\phi\colon U(p,d,n)\longrightarrow C^*_\times(L_i;\F_p)$ from Lemma \ref{lem:chainmapfromuniversalunbounded} is $Q$-invariant for some action of $Q$ on $U(p,d,n)$. First, $Q$ acts on $B_*(K;\F_p)\otimes \ldots \otimes B_*(K;\F_p)$ by 
\[
(p_1,\ldots,p_n,\sigma)\cdot (z_1\otimes \ldots \otimes z_n)=(-1)^\epsilon (p_1\cdot z_{\sigma^{-1}(1)}\otimes \ldots\otimes p_n\cdot z_{\sigma^{-1}(n)}),
\]
where $z_i\in B_*(K;\F_p)$. Signs must be chosen appropriately in order for this action to commute with the differential, and it is enough to choose $\epsilon$ as for wreath products in \cite[p. 49]{LEvens91}. Moreover, this action satisfies points $(1)$ and $(2)$ in Subsection \ref{subsection:resolution-semidirect-product} and $Q$ acts on $C^*_\times(L_i;\F_p)$ by $(q\cdot f)(z)=f(q^{-1}\cdot z)$. In the proof of Proposition \ref{prop:commonintegralliftingsamecohomology}, we showed how $P$ acts, via its integral lifting, on $B_*(C^d_{p^\infty};\F_P)$ and on $\Lambda(y_1,\ldots,y_d)$. Then $Q$ acts $B_*(C^d_{p^{\infty}};\F_p)\otimes\ldots\otimes B_*(C^d_{p^{\infty}};\F_p)$ by
\[
(p_1,\ldots,p_n,\sigma)\cdot (z_1\otimes \ldots \otimes z_n)=(-1)^\epsilon (p_1\cdot z_{\sigma^{-1}(1)}\otimes \ldots\otimes p_n\cdot z_{\sigma^{-1}(n)}),
\]
where $z_i\in B_*(C^d_{p^{\infty}};\F_p)$ and the signs are chosen as above, and on $C^*_\times(L_\infty;\F_p)$ by $(q\cdot f)(z)=f(q^{-1}\cdot z)$. Moreover, $Q$ acts on $\stackrel{n}\otimes \Lambda(y_1,\ldots,y_d)$ by
\[
(p_1,\ldots,p_n,\sigma)\cdot (z_1\otimes \ldots \otimes z_n) =(-1)^\epsilon (p_1\cdot z_{\sigma^{-1}(1)}\otimes \ldots \otimes p_n\cdot z_{\sigma^{-1}(n)}),
\]
where $z_i\in \Lambda(y_1,\ldots,y_d)$ and the signs are chosen as above. Finally, $Q$ acts diagonally on $U(p,d,n)$ and it is straightforward that $\phi$ is $Q$-invariant.
\end{proof}

\section{Cohomology of uniserial space groups and twisted abelian $p$-groups} \label{section:cohomologystandarduniserialspacegroup}

In this section, we study cohomology rings of quotients of uniserial space groups (see Subsections \ref{subsection:uniserialpadicspacegroupsandconstructible} and \ref{subsection:standarduniserialpadicspacegroups}) and cohomology rings of twisted abelian $p$-groups (see Subsection \ref{subsection:twistedabelianpgroups}). Let $\Z_p^{d_x}\rtimes W(x)$ denote the standard uniserial $p$-adic space group of dimensions $d_x=(p-1)p^{x-1}$. For $p$ odd, $W(x)$ is the unique maximal $p$-subgroup of $\GL_{d_x}(\Z_p)$, up to conjugation. Recall that for the $p=2$ case, there is another maximal $2$-subgroup (up to conjugacy) $\tilde{W}(x)$ in $\GL_{d_x}(\Q_2)$ described in Remark \ref{remark:sylowpeven}. J.F. Carlson shows that the cohomology of the quotients of $\Z_2^{d_x}\rtimes \tilde{W}(x)$ is determined (up to a finite number of possibilities) by the cohomology of the quotients of $\Z_2^{d_x}\rtimes W(x)$ (see \cite[p.259-260]{Carlson}).

More precisely, in the proof of \cite[Lemma 4.6]{Carlson}, it is shown that the cohomology of the quotients of $\Z_2^4\rtimes W(3)$ determines the cohomology of the quotients of $\Z_2^4\rtimes C_8$, where $C_8\leq Q_{16}=\tilde{W}(3)$ is the maximal subgroup of order $8$ in $Q_{16}$ and it is conjugate to an element of order $8$ in $W(3)$. Then, by the proof of \cite[Proposition 4.5]{Carlson}, the cohomology of the quotients of $\Z_2^4\rtimes C_8$ determines the cohomology of the quotients of $\Z_2^4\rtimes \tilde{W}(3)$. Finally, \cite[Proof of Proposition 4.7]{Carlson} proves that the cohomology of the quotients of $\Z_2^4\rtimes \tilde W(3)$ determines the cohomology of the quotients of $\Z_2^{d_x}\rtimes \tilde{W}(x)$ for $x\geq 4$. So, throughout the following subsections, we shall only state the results for point groups $P\leq W(x)$, although the same results hold for point groups $P\leq \tilde{W}(x)$.

\subsection{Cohomology of standard uniserial $p$-adic space group}\label{subsection:cohostandardpadicspacegroup}

Consider the standard uniserial space group of dimension $d_x=p^{x-1}(p-1)$, $\Z_p^{d_x}\rtimes W(x)$, defined in Subsection \ref{subsection:standarduniserialpadicspacegroups}.

\begin{Prop}\label{prop:standardspacegroupsamecohomology}
For fixed $x$, there are finitely many possibilities for the ring structure of the graded $\F_p$-module $H^*(\Z_p^{d_x}/U\rtimes W(x);\F_p)$ for the infinitely many $W(x)$-invariant lattices $U<\Z_p^{d_x}$.
\end{Prop}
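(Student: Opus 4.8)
The plan is to combine Proposition \ref{prop:commonintegralliftingsamecohomologyunbounded} for the ``round'' lattices $U=p^sT$ with the counting result Theorem \ref{thm:cohringsfromquotient} to sweep up the remaining ones. First I would unwind the wreath-product structure of $W(x)$. Writing $n=p^{x-1}$, $d=p-1<p$ and $P=C_p$ acting on $\Z_p^{d}$ through the companion matrix $M$ of \eqref{eq:action_maximal_class}, we have $W(x)=P\wr S$ with $S=\Syl_p(\Sigma_n)\leq \Sigma_n$, the base group $C_p^{\,n}$ acting block-diagonally and $S$ permuting the $n$ blocks $\Z_p^{d}$. Since $W(x)$ acts uniserially on $T=\Z_p^{d_x}$, every finite-index $W(x)$-invariant lattice is the unique member $T_i$ of the uniserial filtration of its index (Subsection \ref{subsection:uniserialpadicspacegroupsandconstructible}); in particular the only invariant lattices $U<T$ giving finite $p$-group quotients are the $T_i$ with $i\geq 1$, and $T_{sd_x}=p^sT$.

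Next I would dispatch the round lattices $U=p^sT=T_{sd_x}$. Here $T/p^sT=(C_{p^s})^{d_x}=K_s\times\cdots\times K_s$ ($n$ factors) with $K_s=(C_{p^s})^{d}$ of rank $d<p$, and the $W(x)$-action is precisely the wreath-product action of $Q=P\wr S$ on this $n$-fold product. Because all exponents of $K_s$ coincide, the relevant ring of integral matrices in Subsection \ref{subsection:cohomologysmallrank} is all of $\M_{d}(\Z)$, and the single matrix $M\in\GL_{d}(\Z)$ is an integral lifting (Definition \ref{defi:integrallifting}) of the $P$-action on $K_s$ for every $s$; thus these actions have a common integral lifting. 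Proposition \ref{prop:commonintegralliftingsamecohomologyunbounded} therefore applies to the family $\{(T/p^sT)\rtimes W(x)\}_{s\geq 1}$ and yields finitely many isomorphism types of rings among the $H^*((T/p^sT)\rtimes W(x);\F_p)$.

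Finally I would reduce an arbitrary invariant lattice to a round one. Given $U=T_i$ with $i\geq d_x$, set $s=\lfloor i/d_x\rfloor\geq 1$, so that $T_i\leq p^sT$ and the quotient induces a surjection $(T/T_i)\rtimes W(x)\twoheadrightarrow (T/p^sT)\rtimes W(x)$ with kernel $H=p^sT/T_i=T_{sd_x}/T_i$ of order $p^{\,i-sd_x}\leq p^{\,d_x-1}$. This gives an extension
\[
1\to H\to (T/T_i)\rtimes W(x)\to (T/p^sT)\rtimes W(x)\to 1
\]
in which $|H|$ is bounded in terms of $p$ and $x$, the sectional rank $\rk\bigl((T/T_i)\rtimes W(x)\bigr)\leq d_x+\rk(W(x))$ is bounded, and the quotient $Q=(T/p^sT)\rtimes W(x)$ contains the abelian (hence nilpotency class at most two) subgroup $T/p^sT$ of index $|W(x)|$. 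Hence Theorem \ref{thm:cohringsfromquotient} shows that $H^*((T/T_i)\rtimes W(x);\F_p)$ is determined, up to finitely many possibilities depending only on $p$ and $x$, by $H^*((T/p^sT)\rtimes W(x);\F_p)$. Together with the previous paragraph this bounds the number of ring isomorphism types for all $i\geq d_x$; the finitely many lattices with $1\leq i<d_x$ give finitely many bounded-order groups and hence finitely many rings, completing the argument.

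The main obstacle is the intermediate lattices: when $d_x\nmid i$ the quotient $T/T_i$ is genuinely not an $n$-fold product $K\times\cdots\times K$ of groups of constant rank (already for $x=1$ the lattice $T_1$ yields a rank-one quotient while $d=p-1$), so Proposition \ref{prop:commonintegralliftingsamecohomologyunbounded} cannot be invoked directly. The device that resolves this is to round $i$ down to the nearest multiple of $d_x$ and absorb the bounded discrepancy through Theorem \ref{thm:cohringsfromquotient}, for which one only needs the (straightforward) boundedness of the kernel and of the sectional rank together with the bounded-index abelian subgroup of the round quotient.
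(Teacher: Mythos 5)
Your proposal is correct, and its overall two-step skeleton matches the paper's proof: first control the ``round'' quotients $T/p^sT$, then absorb an arbitrary invariant lattice $U=T_i$ by rounding down to $p^sT$ (the paper chooses $s$ with $p^{s+1}\Z_p^{d_x}\leq U\leq p^s\Z_p^{d_x}$, which by uniseriality is the same as your $s=\lfloor i/d_x\rfloor$) and applying Theorem \ref{thm:cohringsfromquotient} to the extension with kernel $p^sT/U$ of order less than $p^{d_x}$, bounded sectional rank, and the abelian subgroup $T/p^sT$ of index $|W(x)|$ in the quotient; your separate treatment of $1\leq i<d_x$ corresponds to the paper's ``leaving out a finite number of invariant sublattices.'' Where you genuinely diverge is in the round-lattice step: the paper writes $\Z_p^{d_x}\rtimes W(x)=(\Z_p^{p-1}\rtimes C_p)\wr S$, applies the bounded-rank Proposition \ref{prop:commonintegralliftingsamecohomology} only to the base group $G=(C_{p^s})^{p-1}\rtimes C_p$ (with the common integral lifting $M$), and then transports the finiteness to the wreath product via Nakaoka's theorem, $H^*(\Z_p^{d_x}/p^s\Z_p^{d_x}\rtimes W(x);\F_p)\cong H^*(S;\otimes^{p^{x-1}}H^*(G;\F_p))$; you instead invoke the unbounded-rank Proposition \ref{prop:commonintegralliftingsamecohomologyunbounded} directly on the whole family $\{(T/p^sT)\rtimes W(x)\}_{s\geq 1}$, viewing $T/p^sT$ as the $p^{x-1}$-fold product of $K_s=(C_{p^s})^{p-1}$ with $Q=W(x)=C_p\wr S$. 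This is precisely the alternative route the authors flag in the introduction (``either using Proposition \ref{prop:introcommonintegralliftingsamecohomology} and Nakaoka's theorem \ldots or more directly by employing Proposition \ref{prop:introcommonintegralliftingsamecohomologyunbounded}''), and it is the route the paper itself takes for the related Proposition \ref{prop:spacegroupssamebigradedalgebras}. Nakaoka's theorem buys an explicit ring-level identification of the wreath-product cohomology from that of the base group, whereas your route buys uniformity: one cochain-level argument with $Q$-invariant quasi-isomorphisms that also yields the graded-module and bigraded-algebra comparisons for free, at the cost of passing through filtrations and Theorem \ref{Car2} rather than an honest ring isomorphism. Your verification that the common integral lifting is just $M\in\GL_{p-1}(\Z)$ (since all exponents of $K_s$ agree, so the ring $R$ of Subsection \ref{subsection:cohomologysmallrank} is all of $\M_{p-1}(\Z)$) is exactly what the paper uses implicitly.
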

\begin{proof}
\textbf{Step 1:} We first prove that there are finitely many ring structures for the infinite collection of  graded $\F_p$-modules $\{H^*(\Z_p^{d_x}/p^s\Z_p^{d_x} \rtimes W(x);\F_p)\}_{s\geq 1}$. Recall that, from Subsection \ref{subsection:standarduniserialpadicspacegroups} , the group $\Z_p^{d_x}\rtimes W(x)$ is the wreath product
\begin{equation}\label{equ:quotientbyUaswreath}
(\Z_p^{p-1}\rtimes C_p)\wr S \text{ with }S=\overbrace{C_p \wr \cdots \wr C_p}^{x-1},
\end{equation}
where the leftmost group $C_p$ is generated by the matrix $M$ of Equation \eqref{eq:action_maximal_class} and $S$ is Sylow $p$-subgroup of the symmetric group $\Sigma_{p^{x-1}}$. Now, let $s\geq 1$ and $s'\geq 1$. Then, $\Z_p^{d_x}/p^s\Z_p^{d_x}\rtimes W(x)$ and $\Z_p^{d_x}/p^{s'}\Z_p^{d_x}\rtimes W(x)$ may be written as the wreath products
\begin{equation}\label{equ:quotientbyUsaswreath}
(\Z_p^{p-1}/p^s\Z_p^{p-1}\rtimes C_p)\wr S
\text{ and }(\Z_p^{p-1}/p^{s'}\Z_p^{p-1}\rtimes C_p)\wr S
\end{equation}
respectively. Set $G=\Z_p^{p-1}/p^s\Z_p^{p-1}\rtimes C_p$ and $G'=\Z_p^{p-1}/p^{s'}\Z_p^{p-1}\rtimes C_p$. Then $G=K\rtimes C_p$ and $G'=K'\rtimes C_p$ where  $K=C_{p^s}\times\ldots\times C_{p^s}$ and $K'=C_{p^{s'}}\times\ldots\times C_{p^{s'}}$ are abelian groups of rank $p-1$. Moreover, the action of the generator of $C_p$ on $K$ and $K'$ is given by the matrix $M$. Hence, by Proposition \ref{prop:commonintegralliftingsamecohomology}, $H^*(G;\F_p)$ and $H^*(G';\F_p)$ are isomorphic $\F_p$-modules and there are finitely many possibilities for their ring structures when $s$ and $s'$ run over all integers greater or equal to $1$. Finally, by Nakaoka's theorem \cite[Theorem 5.3.1]{LEvens91}, there is an isomorphism of rings
\[
H^*(\Z_p^{d_x}/p^s\Z_p^{d_x}\rtimes W(x);\F_p)\cong H^*(S;{\stackrel{p^{x-1}}\otimes}H^*(G;\F_p)),
\]
where $S$ acts by permutations on the $p^{x-1}$ (tensor) copies of $H^*(G;\F_p)$. We conclude that there are finitely many possibilities for the ring structure of the graded $\F_p$-module $H^*(\Z_p^{d_x}/p^s\Z_p^{d_x}\rtimes W(x);\F_p)$ for the infinitely many choices of $s$.

\textbf{Step 2:} Now we consider the cohomology ring $H^*(\Z_p^{d_x}/U\rtimes W(x);\F_p)$, where $U$ is any $W(x)$-invariant sublattice of $\Z_p^{d_x}$. By uniseriality, there exists $s\geq 0$ such that $p^{s+1}\Z_p^{d_x}\leq U\leq p^s\Z_p^{d_x}$. In fact, leaving out a finite number of invariant sublattices we may assume that $s\geq 1$. Consider the short exact sequence of groups:
\[
0\to p^s\Z_p^{d_x}/U\to\Z_p^{d_x}/U\rtimes W(x) \to\Z_p^{d_x}/p^s\Z_p^{d_x}\rtimes W(x)\to 1.
\]
We have that $|p^s\Z_p^{d_x}/U|<p^{d_x}$, that $\rk(\Z_p^{d_x}/U\rtimes W(x))\leq d_x+\rk(W(x))$ and that, by Step $1$ above, there are finitely many ring structures for the quotient group $\Z_p^{d_x}/p^s\Z_p^{d_x}\rtimes W(x)$. Then by Theorem  \ref{thm:cohringsfromquotient} there are finitely many ring structures for $H^*(\Z_p^{d_x}/U\rtimes W(x);\F_p)$.
\end{proof}

\subsection{Cohomology of uniserial $p$-adic space groups}\label{subsection:cohopadicspacegroups}
Let $R$ be a uniserial $p$-adic space group of dimension $d_x=p^{x-1}(p-1)$ and coclass at most $c$ ($x\leq c$) and denote by $T$ its translation group and by $P$ its point group. We  define $T_0$ as the minimal $P$-lattice  for which the extension $T_0\to R_0\to P$ splits (see Subsection \ref{subsection:uniserialpadicspacegroupsandconstructible}). 



\begin{Prop}\label{prop:spacegroupssamebigradedalgebras}
Let $p$ be an odd prime. For each $s\geq 1$, there is a filtration of the ring  $H^*(T_0/p^sT_0\rtimes P;\F_p)$ such that the associated bigraded algebras are isomorphic for all $s$.
\end{Prop}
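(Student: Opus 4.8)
The plan is to exhibit the family $\{T_0/p^sT_0\rtimes P\}_{s\geq 1}$ as a family of the shape treated by Proposition \ref{prop:commonintegralliftingsamecohomologyunbounded} and then to quote part (2) of that proposition, which produces exactly a filtration of each cohomology ring whose associated bigraded algebra is independent of the index. Recall from Subsection \ref{subsection:standarduniserialpadicspacegroups} that, after conjugating, $P\leq W(x)=C_p\wr S$ with $S=\overbrace{C_p\wr\cdots\wr C_p}^{x-1}\leq\Sigma_n$ and $n=p^{x-1}$, and that $T_0$ is a $W(x)$-invariant sublattice of $\Z_p^{d_x}$. I would fix the block decomposition $\Z_p^{d_x}=\bigoplus_{j=1}^n V_j$ with $V_j\cong\Z_p^{p-1}$, on which the $j$-th base copy of $C_p$ acts through the companion matrix $M$ of \eqref{eq:action_maximal_class} and on which $S$ acts by permuting the summands; this is the wreath pattern $P\leq C_p\wr S$ required as the group $Q$ in Proposition \ref{prop:commonintegralliftingsamecohomologyunbounded}.

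Working first with the block-diagonal lattice $D=\bigoplus_{j=1}^n\pi_j(T_0)$, which is again $W(x)$-invariant, the quotient $D/p^sD$ is literally an $n$-fold product $K_s\times\cdots\times K_s$ with $K_s=\pi_1(T_0)/p^s\pi_1(T_0)\cong (C_{p^s})^{p-1}$ of fixed rank $d=p-1<p$. On each factor the generator of $C_p$ acts through $M\in\GL_{p-1}(\Z)$, so the assignment sending it to $M$ is an integral lifting in the sense of Definition \ref{defi:integrallifting}; decisively, this lifting is the \emph{same} matrix for every $s$, so the actions on the $K_s$ admit a common integral lifting. Proposition \ref{prop:commonintegralliftingsamecohomologyunbounded}(2) then applies to $\{D/p^sD\rtimes P\}_{s\geq 1}$ and yields, for each $s$, a filtration of $H^*(D/p^sD\rtimes P;\F_p)$ whose associated bigraded algebra is independent of $s$. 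Equivalently, one could run Proposition \ref{prop:commonintegralliftingsamecohomology} on the single block $K_s\rtimes C_p$ and feed it into Nakaoka's theorem for the wreath product \cite[Theorem 5.3.1]{LEvens91}, tracking the multiplicative structure through Lemma \ref{lemma:homalgalgebra}, exactly as in Step 1 of Proposition \ref{prop:standardspacegroupsamecohomology}.

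It remains to bridge $D$ and the given lattice $T_0$, and this is where the main work lies. Invariance of $T_0$ under the base group only forces $\lambda\,\pi_j(T_0)\subseteq T_0\cap V_j\subseteq\pi_j(T_0)$, where $\lambda=1-\zeta_p$, so a priori $T_0$ need not equal $D$; what one obtains cheaply is merely the sandwich $pD\subseteq T_0\subseteq D$, since $\lambda^{p-1}$ and $p$ agree up to a unit. The point I would establish is that $T_0$ and $D$ are in fact isomorphic as $W(x)$-lattices, hence as $P$-lattices, so that $T_0/p^sT_0\cong D/p^sD$ as $P$-modules and the conclusion for the $D$-family transports verbatim to the $T_0$-family; I expect to obtain this from the structure of $\End_{\Q_pW(x)}(\tilde T)$, arguing that all $W(x)$-invariant lattices in $\tilde T$ lie in a single isomorphism class.

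The hardest step is precisely this last identification $T_0\cong D$, and it is essential that it be carried out at the level of $P$-modules, so that the rigid cochain-level zig-zag underlying Proposition \ref{prop:commonintegralliftingsamecohomologyunbounded} can be invoked. The weaker counting Theorem \ref{thm:cohringsfromquotient}, which one might be tempted to apply to the bounded-index inclusions $pD\subseteq T_0\subseteq D$, only bounds the number of possibilities and cannot by itself deliver the single, $s$-independent bigraded algebra that the statement demands.
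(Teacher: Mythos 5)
Your first two paragraphs are essentially the paper's own proof: the paper deduces the statement by observing that $T_0/p^sT_0$ is an $n$-fold product of copies of $K=(C_{p^s})^{p-1}$ with $n=p^{x-1}$, that by \eqref{equ:R_0containedZW(x)} the action of $P\leq W(x)$ is given by integral matrices furnishing a common integral lifting independent of $s$, and then quoting Proposition \ref{prop:commonintegralliftingsamecohomologyunbounded}(2). (Your ``equivalently via Nakaoka'' aside is weaker than you suggest: for a proper subgroup $P<W(x)$ the group $T_0/p^sT_0\rtimes P$ is not a wreath product, so Nakaoka's theorem does not apply, and even for $P=W(x)$ that route, as in Step 1 of Proposition \ref{prop:standardspacegroupsamecohomology}, only yields finitely many ring structures rather than the single $s$-independent bigraded algebra demanded here.) Where you diverge is in paragraph three: the paper applies Proposition \ref{prop:commonintegralliftingsamecohomologyunbounded} to $T_0/p^sT_0$ directly, taking the wreath-compatible block decomposition for granted, whereas you insert the block-diagonal lattice $D=\bigoplus_j\pi_j(T_0)$ and propose to prove $T_0\cong D$ as $W(x)$-lattices.

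That bridging claim is where your proof breaks: it is \emph{false} for $x\geq 2$ that all $W(x)$-invariant lattices in $\tilde{T}$ lie in one isomorphism class. Since the base factors of $W(x)$ act on the blocks $\tilde{V}_j$ through pairwise non-isomorphic irreducible $\Q_pC_p$-modules and $S$ permutes the blocks transitively, one computes $\End_{\Q_pW(x)}(\tilde{T})\cong\Q_p(\zeta_p)$, acting diagonally with $\zeta_p$ realized by $\mathrm{diag}(M,\dots,M)$. The uniformizer $\lambda=1-\zeta_p$ carries the unique invariant lattice $T'_i$ of index $p^i$ to $T'_{i+n}$ (each block has index $p$ under $1-M$), while units of $\Z_p[\zeta_p]$ fix each $T'_i$ by uniseriality; hence the chain splits into exactly $p^{x-1}$ isomorphism classes, represented by $T'_0,\dots,T'_{n-1}$. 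Concretely, $T'_1=\ker(\Z_p^{d_x}\twoheadrightarrow\F_p)$ satisfies $\bigoplus_j\pi_j(T'_1)=\Z_p^{d_x}=T'_0\not\cong T'_1$, and such a lattice genuinely occurs as a $T_0$ in the sense of the proposition (take $R=T'_1\rtimes W(x)$, which already splits, so $T_0=T=T'_1$). Moreover, for large $s$ the quotients $T'_1/p^sT'_1$ and $T'_0/p^sT'_0$ cannot be isomorphic as $P$-modules either (otherwise a standard inverse-limit argument over the finite sets of isomorphisms at each level would produce a lattice isomorphism), so no clever re-decomposition restores the hypotheses of Proposition \ref{prop:commonintegralliftingsamecohomologyunbounded} for such $T_0$. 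Your closing remark is correct that Theorem \ref{thm:cohringsfromquotient} applied to $pD\subseteq T_0\subseteq D$ gives only finiteness, not a single $s$-independent bigraded algebra; but the lemma you rely on to do better is untenable, so the proof is incomplete at exactly the step you flagged as the hardest. Repairing it would require either showing that the minimal split lattices arising in the coclass setting lie in the standard (block-decomposable) class, or a cochain-level model adapted to the non-block-diagonal classes $T'_1,\dots,T'_{n-1}$ --- a point which, it must be said, the paper's own three-line proof also passes over in silence.
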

\begin{proof}
By Equation \eqref{equ:R_0containedZW(x)}, $T_0\rtimes P$ is a subgroup of the standard uniserial $p$-adic space group of dimension $d_x$, $\Z_p^{d_x}\rtimes W(x)$. The quotient $T_0/ p^s T_0$ is the $p^{x-1}$-fold direct product $K\times \ldots \times K$, where $K$ is the $(p-1)$-fold direct  product $K=C_{p^s}\times \ldots\times C_{p^s}$. As $P\leq W(x)$, the action of $P$ on $T_0$ is given by integral matrices and there is a common integral lifting that does not depend on $s$. Then the result follows from Proposition \ref{prop:commonintegralliftingsamecohomologyunbounded}\eqref{prop:commonintegralliftingsamecohomologyunboundedbigradedalgebras}.
\end{proof}


\begin{Prop}\label{prop:spacegroupssamecohomology}
There are finitely many possibilities for the ring structure of the graded $\F_p$-module $H^*(T_0/U\rtimes P;\F_p)$ for the infinitely many $P$-invariant lattices $U<T$. 
\end{Prop}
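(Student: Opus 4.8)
The plan is to reproduce the two-step strategy of Proposition \ref{prop:standardspacegroupsamecohomology}, but with the full group $W(x)$ replaced by the point group $P$ and with the combination of Proposition \ref{prop:commonintegralliftingsamecohomology} and Nakaoka's theorem replaced by a single application of Proposition \ref{prop:commonintegralliftingsamecohomologyunbounded}.

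First I would settle the lattices $U=p^sT_0$. This is essentially the content of Proposition \ref{prop:spacegroupssamebigradedalgebras}: the quotient $T_0/p^sT_0$ is the $p^{x-1}$-fold direct product $K\times\ldots\times K$ with $K=C_{p^s}\times\ldots\times C_{p^s}$ of rank $d=p-1<p$, and by \eqref{equ:R_0containedZW(x)} we have $P\leq W(x)=C_p\wr S$ with $S\leq\Sigma_{p^{x-1}}$ and leftmost $C_p$ acting through the integral matrix $M$ of \eqref{eq:action_maximal_class}. Hence the action of this $C_p$ on each factor $K$ admits the common integral lifting $M$, which is independent of $s$, and Proposition \ref{prop:commonintegralliftingsamecohomologyunbounded}\eqref{prop:commonintegralliftingsamecohomologyunboundedcohorings} (applied with base group $C_p$ and $Q=P$) yields that there are only finitely many ring structures among the family $\{H^*(T_0/p^sT_0\rtimes P;\F_p)\}_{s\geq 1}$.

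Next I would treat an arbitrary $P$-invariant lattice $U<T$. Since $R_0=T_0\rtimes P$ is itself a split uniserial $p$-adic space group, $P$ acts uniserially on $T_0$, so the $P$-invariant sublattices of $T_0$ form a single chain; as $U\leq T\leq T_0$ is $P$-invariant of finite index, it is one of its terms, and consecutive powers of $T_0$ in this filtration differ by index $p^{d_x}$. Letting $s$ be maximal with $U\leq p^sT_0$, and discarding the finitely many lattices for which $s=0$, we may assume $p^{s+1}T_0\leq U\leq p^sT_0$, so that $|p^sT_0:U|\leq p^{d_x}$. This produces the extension
\[
1 \to p^sT_0/U \to T_0/U\rtimes P \to T_0/p^sT_0\rtimes P \to 1,
\]
whose kernel has order at most $p^{d_x}$ and whose quotient $Q=T_0/p^sT_0\rtimes P$ contains the abelian (hence nilpotency class at most $2$) normal subgroup $T_0/p^sT_0$ of the fixed index $|P|$; moreover $\rk(T_0/U\rtimes P)\leq d_x+\rk(P)$ is bounded independently of $U$. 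By the first step $H^*(Q;\F_p)$ takes only finitely many ring structures, so for $p$ odd Theorem \ref{thm:cohringsfromquotient} determines $H^*(T_0/U\rtimes P;\F_p)$ up to finitely many possibilities; restoring the finitely many excluded lattices, and using for $p=2$ the reduction to point groups in $W(x)$ described at the beginning of this section, the proposition follows.

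The main obstacle I anticipate lies in the bookkeeping of the second step rather than in any hard computation: one must verify that every $P$-invariant $U<T$ genuinely occurs as a term of the uniserial filtration of the superlattice $T_0$, which is what makes the sandwich $p^{s+1}T_0\leq U\leq p^sT_0$ and the index bound $|p^sT_0:U|\leq p^{d_x}$ available, and this rests on the uniseriality of the $P$-action on $T_0$ via $R_0$. One must then check that the three hypotheses $|H|\leq n$, $\rk(G)\leq r$ and $|Q:A|\leq f$ of Theorem \ref{thm:cohringsfromquotient} hold with constants $n,r,f$ that do not depend on $U$, which is exactly what the bounds above supply.
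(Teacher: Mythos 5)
Your proposal is correct and is essentially the paper's own second proof of Proposition \ref{prop:spacegroupssamecohomology}: first the quotients $T_0/p^sT_0\rtimes P$ via the common integral lifting inside $W(x)$ (your direct appeal to Proposition \ref{prop:commonintegralliftingsamecohomologyunbounded}\eqref{prop:commonintegralliftingsamecohomologyunboundedcohorings} is the same mathematics as the paper's citation of Proposition \ref{prop:spacegroupssamebigradedalgebras} plus Theorem \ref{Car2}), then the uniserial sandwich $p^{s+1}T_0\leq U\leq p^sT_0$ giving the extension with kernel $p^sT_0/U$ of order at most $p^{d_x}$, bounded rank, and bounded-index abelian subgroup $T_0/p^sT_0$ in the quotient, to which Theorem \ref{thm:cohringsfromquotient} applies. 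The paper also records an alternative argument you did not need, embedding $T_0/U\rtimes P$ into $\Z_p^{d_x}/U\rtimes W(x)$ and combining Proposition \ref{prop:standardspacegroupsamecohomology} with the bounded-index subgroup counting of Theorem \ref{Car1}.
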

\begin{proof}
By Equation \eqref{equ:R_0containedZW(x)}, $T_0\rtimes P$ is a subgroup of the standard uniserial $p$-adic space group of dimension $d_x$, $\Z_p^{d_x}\rtimes W(x)$. We give two proofs. 

\textbf{Proof 1:}
By uniseriality, the $P$-sublattice $U$ is also an $W(x)$-sublattice. Then we have $T_0/U\rtimes P\leq \Z_p^{d_x}/U\rtimes W(x)$. By Proposition \ref{prop:standardspacegroupsamecohomology}, there are finitely many cohomology rings $H^*(\Z_p^{d_x}/U\rtimes W(x))$ when $U$ runs over the infinitely many $P$-sublattices. The index $|\Z_p^{d_x}/U\rtimes W(x):T_0/U\rtimes P|$ equals $|\Z_p^{d_x}:T_0||W(x):P|$ and does not depend on $U$. So by Theorem \ref{Car1}, there are finitely many ring structures for $H^*(T_0/U\rtimes P;\F_p)$.

\textbf{Proof 2:} By uniseriality, there exists $s\geq 0$ such that $p^{s+1}T_0\leq U\leq p^s T_0$ and we may assume that $s\geq 1$. Consider the short exact sequence of groups:
\[
0\to p^sT_0/U\to T_0/U\rtimes P \to T_0/p^s T_0\rtimes P\to 1.
\]
As $|p^s T_0/U|<pd_x$ and $\rk(T_0/U\rtimes P)\leq d_x+\rk(P)$, Theorem \ref{thm:cohringsfromquotient} shows that it is enough to prove that there are finitely many ring structures for the quotient group $T_0/p^s T_0\rtimes P$. This follows from the previous Proposition  \ref{prop:spacegroupssamebigradedalgebras} and Theorem \ref{Car2}.
\end{proof}

\begin{Cor}\label{cor:spacegroupssamecohomologynonsplit}
There are finitely many possibilities for the ring structure of the graded $\F_p$-module $H^*(R/U;\F_p)$ for the infinitely many $P$-invariant lattices $U<T$. 
\end{Cor}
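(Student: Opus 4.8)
The plan is to derive this from the split case, Proposition \ref{prop:spacegroupssamecohomology}, by realizing $R/U$ as a subgroup of uniformly bounded index in $R_0/U=T_0/U\rtimes P$ and then invoking Carlson's subgroup theorem. First I would check that the two quotients are well defined and comparable: since $U$ is a $P$-invariant sublattice of the abelian lattice $T_0$, it is normal in $T_0$ and, being $P$-invariant, normal in $R_0=T_0\rtimes P$; as $U<T\leq R\leq R_0$ it is likewise normal in $R$. Hence $R/U$ and $R_0/U$ are finite $p$-groups and $R/U$ is a subgroup of $R_0/U$.

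Next I would bound the index uniformly in $U$. From Subsection \ref{subsection:uniserialpadicspacegroupsandconstructible} we have $|R_0:R|=|T_0:T|$, a fixed finite number; write $|T_0:T|\leq p^n$. Since $U\leq R\leq R_0$, the index $|R_0/U:R/U|=|R_0:R|=|T_0:T|\leq p^n$ is independent of the choice of $U$. By Proposition \ref{prop:spacegroupssamecohomology}, there are only finitely many possibilities for the ring $H^*(R_0/U;\F_p)=H^*(T_0/U\rtimes P;\F_p)$ as $U$ ranges over the $P$-invariant lattices $U<T$. Applying Theorem \ref{Car1} with this fixed bound $p^n$ to each of these finitely many algebras $S\cong H^*(R_0/U;\F_p)$, and taking the union over the finitely many values of $S$, yields finitely many possibilities for $H^*(R/U;\F_p)$.

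There is essentially no obstacle beyond the bookkeeping just described: the corollary is a formal consequence of the split case together with Carlson's theorem on cohomology rings of subgroups of bounded index. The only point requiring care is that the index $|R_0:R|$ does not vary with $U$, which is precisely the content of the identity $|R_0:R|=|T_0:T|$ recorded above.
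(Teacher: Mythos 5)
Your proof is correct and follows exactly the paper's argument: embed $R/U$ in $R_0/U=T_0/U\rtimes P$ with index $|R_0:R|=|T_0:T|$ independent of $U$, then combine Proposition \ref{prop:spacegroupssamecohomology} with Theorem \ref{Car1}. The extra bookkeeping you supply (normality of $U$ in $R$ and $R_0$, and the uniformity of the index bound) is precisely what the paper's two-line proof leaves implicit.
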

\begin{proof}
We have $R\leq R_0=T_0\rtimes P$, $R/U\leq T_0/U\rtimes P$ and $|R_0:R|=|T_0:T|$. The result follows from Proposition \ref{prop:spacegroupssamecohomology} and Theorem \ref{Car1}.
\end{proof}

\subsection{Cohomology of twisted abelian $p$-groups.}\label{subsection:cohotwistedabelianpgroup}
Let $A$ be an abelian $p$-group for an odd prime $p$, let $P$ a $p$-group that acts on $A$ and let $\lambda \in \Hom_P(\Lambda^2 A,A)$. Consider then the group $A_\lambda$ constructed in Definition \ref{defi:Alambda}. Under the assumption that $A_\lambda$ is powerful $p$-central and has the $\Omega$EP, Theorem \ref{ThmWeigel}\eqref{ThmWeigelHG}  shows at once that $A$ and $A_\lambda$ have isomorphic cohomology rings. We show below how to compare the cohomology rings of $A\rtimes P$ and $A_\lambda\rtimes P$ (see Definition \ref{defi:AlambdasdP}). Before that, we give a precise statement of the Conjecture in the Introduction.

\begin{Conj}[{detailed statement of Conjecture \ref{conj:intro}}]\label{conj:detailed}
Let $A$ be an abelian $p$-group for an odd prime $p$, let $P$ a $p$-group and $P\to \Aut(A)$ an action with an integral lifting and let $\lambda \in \Hom_P(\Lambda^2 A,A)$. Assume that $A_\lambda$ is powerful $p$-central and has the $\Omega$EP. Then there is a zig-zag of quasi-isomorphisms in the category of cochain complexes,
\[
C^*(A;\F_p)\leftarrow U_1\rightarrow U_2\leftarrow\ldots \leftarrow U_r\rightarrow C^*(A_\lambda;\F_p),
\]
where each cochain complex $U_i$ has a product and a $P$-action, and each morphism is $P$-invariant and induces a ring isomorphism in cohomology.
\end{Conj}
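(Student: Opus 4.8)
The plan is to reproduce, in the twisted setting, the universal-model mechanism of Lemma \ref{lem:chainmapfromuniversal} and Lemma \ref{lem:chainmapfromuniversalinvariant}: I would construct a cochain complex $U=U_{\mathrm{ev}}\otimes U_{\mathrm{odd}}$ carrying a $P$-action and a product, together with $P$-invariant cochain maps $U\to C^*(A;\F_p)$ and $U\to C^*(A_\lambda;\F_p)$ that each induce a ring isomorphism in cohomology, and then splice the two at $U$. The factorization of $U$ is dictated by Theorem \ref{ThmWeigel}\eqref{ThmWeigelHG}, which applies to both groups with the same $d=\dim_{\F_p}(\Omega_1)$: the factor $U_{\mathrm{odd}}=\Lambda(y_1,\dots,y_d)$ should realize the exterior generators and $U_{\mathrm{ev}}$ the polynomial ones. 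As structural input I would record that $\Omega_1(A)=\Omega_1(A_\lambda)$ by Lemma \ref{propertiesAlambda}\eqref{propertiesAlambdasameOmega1}, that $\im(\lambda)\le pA$ by Lemma \ref{additionalpropertiesAlambda}\eqref{propertiesAlambdapowerfulcondition}, hence $\Phi(A_\lambda)=A^p=\Phi(A)$, so that $H^1(A;\F_p)=H^1(A_\lambda;\F_p)$ with a common basis of $1$-cocycles $Y_1,\dots,Y_d$ given by \eqref{eq:generatorsY_i}; these are homomorphisms and hence genuine cocycles for either group operation.

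The exterior factor is, at first sight, operation-blind: since the cochain cup product \eqref{equ:standardrescupproduct} multiplies values on overlapping simplices without referring to the group law, the cochains $Y_{i_1}\cup\dots\cup Y_{i_t}$ agree for $A$ and $A_\lambda$ and are cocycles in both. The genuine constraint is to turn this into a $P$-invariant multiplicative map out of $\Lambda(y_1,\dots,y_d)$, for which the normalized antisymmetrization of \eqref{eq:universalmapodd} is the natural device --- but it requires $t!$ to be invertible, i.e.\ $d<p$. For the polynomial factor, the degree-one classes and their Bocksteins factor through the $P$-invariant abelianization $\bar A:=A/\im(\lambda)$, which is abelian of rank $d$ (its Frattini quotient coincides with that of $A$), receives a $P$-equivariant surjection from $A_\lambda$, and embeds $P$-equivariantly into $C^d_{p^\infty}$ via the integral lifting. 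This suggests taking $U_{\mathrm{ev}}=C^*(C^d_{p^\infty};\F_p)$, mapping to $C^*(A;\F_p)$ by the inclusion $A\hookrightarrow C^d_{p^\infty}$ and to $C^*(A_\lambda;\F_p)$ through $A_\lambda\twoheadrightarrow\bar A\hookrightarrow C^d_{p^\infty}$, with reduced-cohomology isomorphisms guaranteed by Theorem \ref{ThmWeigel}\eqref{ThmWeigeljred}. Tensoring the two factors and cupping, as in Lemma \ref{lem:chainmapfromuniversal}, would then assemble $U$ and the two maps, and Lemma \ref{lemma:homalg}/\ref{lemma:homalgalgebra} would transport the conclusion to the semidirect products with $P$.

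The main obstacle is that neither factor survives this outline intact, and the two failures are coupled. On the polynomial side, the inflation $H^*(\bar A;\F_p)\to H^*(A_\lambda;\F_p)$ is \emph{not} an isomorphism: the central extension $\im(\lambda)\to A_\lambda\to\bar A$ contributes a transgression whose class lives in degree two and entangles the exterior and polynomial parts, so the naive routing through $\bar A$ realizes only part of $H^2(A_\lambda)$ and misses a twist-created generator. One therefore needs a genuinely twisted even model --- morally a divisible class-two ``twisted Pr\"ufer'' object assembled from the $\Omega$-extension tower $A_\lambda=H/\Omega_1(H)$, $H=H'/\Omega_1(H'),\dots$ --- whose mod-$p$ cohomology is polynomial and which receives $A_\lambda$ compatibly; showing that such an object exists and computing its cohomology is the heart of the matter, precisely because $\lambda$ does not extend to $C^d_{p^\infty}$ (the relevant tensor product of Pr\"ufer groups vanishes). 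On the exterior side, the range $d\ge p$ forced by the Leedham-Green classification removes the antisymmetrization, so one must choose cochain primitives for the relations $Y_i\cup Y_j=-Y_j\cup Y_i$ and $Y_i\cup Y_i=0$ that are at once $P$-invariant and compatible with both $+$ and $+_\lambda$, and these primitives depend on the group law. Reconciling a genuinely twisted even model with a $P$-invariant exterior model in the absence of the rank bound is the crux, and it is exactly this that keeps the statement conjectural.
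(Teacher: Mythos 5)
The statement you were given is Conjecture \ref{conj:detailed}: the paper does not prove it. It is exactly the hypothesis under which Theorem \ref{thm:Carslonconj} is established, and the only things the authors prove around it are the abstract isomorphism $H^*(A;\F_p)\cong H^*(A_\lambda;\F_p)$ (both sides being $\Lambda[x_1,\dots,x_d]\otimes\F_p[y_1,\dots,y_d]$ by Theorem \ref{ThmWeigel}\eqref{ThmWeigelHG}) and the consequence Proposition \ref{prop:isoAAlambdartimesPrealized}, which only uses the conjecture as a black box. So there is no proof in the paper to compare yours against, and you correctly do not claim one: your text is a strategy analysis that ends by explaining why it does not close. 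Judged as such, it is accurate, and it reconstructs what is visibly the intended line of attack, namely the twisted analogue of the universal model $U(p,d)$ of Lemmas \ref{lem:chainmapfromuniversal} and \ref{lem:chainmapfromuniversalinvariant}. Your structural bookkeeping is correct: $\Omega_1(A)=\Omega_1(A_\lambda)$, $\im(\lambda)\leq pA$ so $\Phi(A_\lambda)=\Phi(A)$, the classes $Y_i$ of \eqref{eq:generatorsY_i} are cocycles for both operations (the discrepancy $\tfrac12\lambda(a,a')$ lies in $pA$ and dies under reduction), and the cup products \eqref{equ:standardrescupproduct} of the $Y_i$ are operation-blind as cochains.

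The two obstructions you isolate are genuine, and they are precisely where a purported proof along these lines would fail. On the exterior side, the conjecture is invoked in Theorem \ref{thm:Carslonconj} for $A=(T_0/U,+_\lambda)$ of rank up to $2d_x$ with $d_x=p^{x-1}(p-1)$, so $d<p$ cannot be assumed, the antisymmetrization \eqref{eq:universalmapodd} is unavailable, and the device that absorbed unbounded rank in the untwisted case (the $n$-fold tensor decomposition behind Proposition \ref{prop:commonintegralliftingsamecohomologyunbounded} with the wreath action) breaks because $\lambda$ couples the factors. On the even side, your diagnosis can be made completely concrete: for $A=C_{p^2}\times C_{p^2}$ and $\lambda(e_1,e_2)=pe_1$, the class of the central extension $\im(\lambda)\to A_\lambda\to \overline{A}$ is $y_1+c\,x_1x_2$ with $c\neq 0$, so inflation kills this combination and sends the polynomial generator $y_1\in H^2(\overline{A};\F_p)$ to a nilpotent multiple of $x_1x_2$; hence any even model routed through $\overline{A}=A/\im(\lambda)$, in particular your composite from $C^*(C^d_{p^\infty};\F_p)$, misses a polynomial generator of $H^*(A_\lambda;\F_p)_{\red}$. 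This also corrects the one misstatement in your outline: Theorem \ref{ThmWeigel}\eqref{ThmWeigeljred} concerns the \emph{restriction} to $\Omega_1$, not inflation, so it does not guarantee the reduced isomorphism you initially invoked --- a slip your own final paragraph effectively retracts. Your observation that $\lambda$ cannot extend over $C^d_{p^\infty}$ (since $C_{p^\infty}\otimes C_{p^\infty}=0$) correctly explains why no naive divisible twisted model exists, and the suggestion to build one from the $\Omega$EP tower $A_\lambda=H/\Omega_1(H)$ is a sensible, but unexecuted, idea. In short: no gap in your reasoning as written, but also no proof --- the missing constructions (a twisted even model with polynomial cohomology receiving $A_\lambda$, and a $P$-invariant multiplicative exterior map without the bound $d<p$) are exactly the open content of the conjecture.
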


Here, $C^*(A;\F_p)$ and $C^*(A_\lambda;\F_p)$ are the standard cochain complexes for $A$ and $A_\lambda$ and they already have products (see Subsection \ref{subsection:standardresolution}). In fact, $C^*(A;\F_p)$ could be replaced by $\Hom_{kA}(A_*,M)$ for $A_*$ any $\F_pA$-projective resolution (and similarly for $A_\lambda$). As in Subsection \ref{subsection:cohomologysmallrank}, we do not assume that the morphisms in the zig-zag commute with the products.

\begin{Prop}\label{prop:isoAAlambdartimesPrealized}
Assume that Conjecture \ref{conj:detailed} holds. Then there are filtrations of the graded algebras $H^*(A_\lambda\rtimes P;\F_p)$ and $H^*(A\rtimes P;\F_p)$ such that the associated bigraded algebras are isomorphic. 
\end{Prop}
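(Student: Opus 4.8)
The plan is to mimic, step by step, the proof of Proposition \ref{prop:commonintegralliftingsamecohomology}, replacing the pair of abelian groups $K,K'$ by the pair $A,A_\lambda$ and replacing the single $P$-invariant quasi-isomorphism coming from Lemma \ref{lem:chainmapfromuniversalinvariant} by the longer zig-zag supplied by Conjecture \ref{conj:detailed}. First I would use the resolution for semidirect products from Subsection \ref{subsection:resolution-semidirect-product} to write both cohomology rings as cohomology of total complexes of double complexes: setting $P_*=B_*(P;\F_p)$ and $C_A=C_{*,*}$ with $C^{n,m}_A=\Hom_{\F_pP}(B_n(P;\F_p),C^m(A;\F_p))$, one has $H^*(A\rtimes P;\F_p)\cong H^*(\tot(\Hom_{\F_pP}(P_*,C^*(A;\F_p))))$, and likewise with $A_\lambda$ in place of $A$. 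Crucially, by Subsection \ref{subsection:resolution-semidirect-product} the product given by Equation \eqref{equ:productforHom} on these total complexes induces precisely the usual cup product on $H^*(A\rtimes P;\F_p)$ and on $H^*(A_\lambda\rtimes P;\F_p)$.

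Next I would feed the zig-zag from Conjecture \ref{conj:detailed},
\[
C^*(A;\F_p)\leftarrow U_1\rightarrow U_2\leftarrow\ldots\leftarrow U_r\rightarrow C^*(A_\lambda;\F_p),
\]
into the machinery of Subsection \ref{subsection:somehomalg}. Each arrow is a $P$-invariant quasi-isomorphism whose induced map on cohomology is a ring isomorphism, and each $U_i$ carries a product and a $P$-action, so each is a cochain complex of $\F_pP$-modules equipped with a product in the sense required by Lemma \ref{lemma:homalgalgebra}. Applying Lemma \ref{lemma:homalgalgebra} to each single arrow (taking $K^*,{K'}^*$ to be the two consecutive terms of the zig-zag and $P_*=B_*(P;\F_p)$), I obtain, for every arrow, filtrations of the graded algebras $H^*(\tot(\Hom_{\F_pP}(P_*,U_i)))$ and $H^*(\tot(\Hom_{\F_pP}(P_*,U_{i+1})))$ whose associated bigraded algebras are isomorphic. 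Since isomorphism of associated bigraded algebras is transitive, composing these isomorphisms along the chain yields filtrations of $H^*(A\rtimes P;\F_p)$ and $H^*(A_\lambda\rtimes P;\F_p)$ with isomorphic associated bigraded algebras, which is exactly the claim.

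The one hypothesis of Lemma \ref{lemma:homalgalgebra} that must be checked at each step is condition $(3)$, namely that $H^*(\varphi)$ preserves the induced products; but this is guaranteed term-by-term because Conjecture \ref{conj:detailed} asserts that every morphism in the zig-zag induces a ring isomorphism in cohomology. I would also note, as in the proof of Proposition \ref{prop:commonintegralliftingsamecohomology}, that the two outermost bigraded algebras are the ones associated to the cup-product filtrations of the genuine cohomology rings $H^*(A\rtimes P;\F_p)$ and $H^*(A_\lambda\rtimes P;\F_p)$, by the identification of products from Subsection \ref{subsection:resolution-semidirect-product} recalled above.

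The main obstacle here is essentially bookkeeping rather than a genuine mathematical difficulty: Lemma \ref{lemma:homalgalgebra} is stated for a \emph{single} quasi-isomorphism $\varphi\colon K^*\to {K'}^*$, whereas Conjecture \ref{conj:detailed} provides a zig-zag with arrows pointing in both directions. I would therefore apply the lemma independently to each arrow of the zig-zag and then chain together the resulting isomorphisms of associated bigraded algebras, being careful that the intermediate objects $H^*(\tot(\Hom_{\F_pP}(P_*,U_i)))$ receive \emph{both} an incoming and an outgoing filtration that are compatible enough to be composed; since each application of Lemma \ref{lemma:homalgalgebra} produces an isomorphism of the \emph{associated bigraded} algebras (a fixed algebraic invariant of the filtered object, independent of direction), transitivity at the level of these bigraded algebras is immediate and no compatibility of the filtrations themselves is needed.
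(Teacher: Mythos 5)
Your proposal is correct and follows essentially the same route as the paper's own proof: apply $\tot(\Hom_{\F_pP}(B_*(P;\F_p),-))$ to the zig-zag from Conjecture \ref{conj:detailed}, invoke Lemma \ref{lemma:homalgalgebra} at each arrow (its hypothesis $(3)$ being supplied by the ring isomorphisms in the conjecture), identify the outer products with the cup products via Subsection \ref{subsection:resolution-semidirect-product}, and chain the resulting isomorphisms of associated bigraded algebras. Your remark on why the chaining is legitimate---the filtration produced by Lemma \ref{lemma:homalgalgebra} is in every application the canonical column filtration, so the intermediate objects carry a single filtration and transitivity applies---is a point the paper leaves implicit, and you handle it correctly.
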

\begin{proof}
The argument is similar to that in the proof of Proposition \ref{prop:commonintegralliftingsamecohomology}. Applying $\tot(\Hom_{\F_pP}(B_*(P;\F_p),-))$ to the zig-zag in Conjecture \ref{conj:detailed} we get a zig-zag of morphisms of cochain complexes:
\[
\tot(D_C)\leftarrow \tot(D_{U_1})\rightarrow\ldots\leftarrow\tot(D_{U_r})\rightarrow \tot(D_{C'}).
\]
Here $D_C=\Hom_{\F_pP}(B_*(P;\F_p),C^*(A;\F_p))$, $D_{C'}$ is defined analogously and $D_{U_i}=\Hom_{\F_pP}(B_*(P;\F_p),U_i)$. Then Lemma \ref{lemma:homalg} gives that $H^*(\tot(D_C))$, $H^*(\tot(D_{C'}))$ and $H^*(\tot(D_{U_i}))$ are all isomorphic graded $\F_p$-vector spaces. Using now the products on $C^*(A;\F_p)$, $C^*(A_\lambda;\F_p)$ and each $U_i$, the Proposition follows from Lemma \ref{lemma:homalgalgebra}.
\end{proof}

\section{Carlson's conjecture}
\label{section:carlsonconjecture}
Now we are ready to prove the main result of this work. The non-twisted case ($\gamma=0$, see Remark \ref{rmk:nontwistedcase}) follows verbatim from the same proof below and without the assumption that Conjecture \ref{conj:detailed} holds.

\begin{Thm}\label{thm:Carslonconj}
Let $p$ be an odd prime and let $c$ be an integer. If Conjecture \ref{conj:detailed} holds, then there are finitely many ring isomorphism types for the cohomology rings $H^*(G;\F_p)$ when $G$ runs over the set of all finite $p$-groups of coclass $c$.
\end{Thm}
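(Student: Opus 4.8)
The plan is to reduce the general coclass-$c$ situation to the constructible groups handled in the previous sections, using the Leedham-Green classification as the entry point and Theorem \ref{thm:cohringsfromquotient} as the engine that absorbs bounded-order kernels. First I would invoke Theorem \ref{thm:quotientofcoclasscareconstructible}: all but finitely many $p$-groups $G$ of coclass $c$ admit a normal subgroup $N$ with $|N|\leq f(p,c)$ such that $G/N$ is constructible. Since there are only finitely many groups $G$ of coclass $c$ with order below any fixed bound, and the cohomology ring of each is a single isomorphism type, the finitely many exceptional groups contribute only finitely many isomorphism types and may be discarded. Thus it suffices to bound the number of isomorphism types of $H^*(G;\F_p)$ for $G$ with $G/N$ constructible and $|N|\leq f(p,c)$.

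The next step is to control the cohomology of the constructible quotient $G/N=G_\gamma$ itself, uniformly over the infinitely many choices of the defining data $(R,U,V,\gamma)$. Here I would split into the non-twisted and twisted cases. In the non-twisted case $\gamma=0$, Remark \ref{rmk:nontwistedcase} gives $G_\gamma=R/U$, and Corollary \ref{cor:spacegroupssamecohomologynonsplit} directly yields finitely many ring structures for $H^*(R/U;\F_p)$ as $U$ ranges over the $P$-invariant lattices; since the dimension $d_x$ is bounded by the coclass ($x\leq c$), only finitely many uniserial $p$-adic space groups $R$ arise, so the total count stays finite. In the twisted case I would use Lemma \ref{lemma:GalphaGalpha0}: the split constructible group $G_{\gamma,0}$ is isomorphic to $(T_0/U,+_\lambda)\rtimes P$, and $G_\gamma$ sits inside $G_{\gamma,0}$ with index $|T_0:T|$, which is bounded in terms of the coclass. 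Under the mild hypotheses of Lemma \ref{propertiesT0U+*} (arranged by passing to a suitable sublattice $W$ and absorbing the bounded-index discrepancy), $(T_0/U,+_\lambda)$ is powerful $p$-central, so Proposition \ref{prop:isoAAlambdartimesPrealized}, which rests on Conjecture \ref{conj:detailed}, shows that $H^*((T_0/U,+_\lambda)\rtimes P;\F_p)$ and $H^*((T_0/U)\rtimes P;\F_p)$ have isomorphic associated bigraded algebras. The untwisted partner $H^*((T_0/U)\rtimes P;\F_p)$ is then pinned to finitely many types by Proposition \ref{prop:spacegroupssamecohomology}, and Theorem \ref{Car2} upgrades the bigraded-algebra agreement to finitely many actual ring types for the twisted group.

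Having bounded the number of ring types of the constructible quotient, the final step is to climb back from $G/N$ to $G$. I would apply Theorem \ref{thm:cohringsfromquotient} to the extension $1\to N\to G\to G/N\to 1$: the kernel satisfies $|N|\leq f(p,c)=:n$, the sectional rank $\rk(G)$ is bounded in terms of the coclass, and $Q=G/N$ contains a normal subgroup of nilpotency class at most two of bounded index (the large abelian or class-two normal subgroup of the constructible group, of index controlled by $|T_0:T|$ and $|P|$, both bounded by $c$). Consequently $H^*(G;\F_p)$ is determined, up to finitely many possibilities depending only on $p,n,\rk(G),f$, by $H^*(G/N;\F_p)$. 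Combining the finitely many types for $H^*(G/N;\F_p)$ with this finite multiplier gives finitely many isomorphism types for $H^*(G;\F_p)$ overall, completing the argument; for the non-twisted statement Theorem \ref{thm:intronontwist} the same chain goes through verbatim without appealing to Conjecture \ref{conj:detailed}.

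I expect the main obstacle to be the twisted-case bookkeeping in the middle step: verifying that the hypotheses of Lemma \ref{propertiesT0U+*} (namely $V\leq pT_0$ and the existence of the auxiliary $P$-invariant $W$ with $U\leq pW\leq p^2V$) can be secured for all but finitely many lattices $U$ after discarding a bounded initial segment of the uniserial filtration, and that the several bounded-index passages ($G_\gamma\le G_{\gamma,0}$, the replacement of $U$ by $W$, the restriction from $T_0$ to $T$) can all be simultaneously absorbed by Theorem \ref{Car1} and Theorem \ref{thm:cohringsfromquotient} without the bounds blowing up with $x$. The uniform dependence on the coclass $c$ rather than on the individual group is what makes this delicate, but since every relevant index is governed by $x\leq c$ and the rank of $P\leq W(x)$, the bounds remain finite for fixed $c$.
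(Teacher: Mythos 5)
Your global architecture matches the paper's (Leedham-Green reduction, bounding $H^*(G_{\gamma,0};\F_p)$, then descending to $G_\gamma$ by Theorem \ref{Car1} and climbing to $G$ by Theorem \ref{thm:cohringsfromquotient}), and your non-twisted branch via Corollary \ref{cor:spacegroupssamecohomologynonsplit} is fine. But there is a genuine gap in your twisted case: you claim the hypotheses of Lemma \ref{propertiesT0U+*} (that $V\leq pT_0$ and that a $P$-invariant $W$ with $U\leq pW$, $W\leq pV$ exists) ``can be secured for all but finitely many lattices $U$ after discarding a bounded initial segment of the uniserial filtration.'' This is false: the failure is governed by the \emph{relative} positions of $U$, $V$ and $pT_0$, not by how deep $U$ sits. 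For instance, for every one of the infinitely many lattices $V$ one can take $U=pV$, and then no $W$ with $U\leq pW\leq p^2V$ exists; likewise $pT_0\leq V$ with $U$ arbitrarily deep gives infinitely many pairs violating $V\leq pT_0$. These infinite families are exactly why the paper's proof splits into three cases according to whether $\tfrac{1}{p^3}U\leq V\leq pT_0$, or $pT_0\leq V$, or $p^3V\leq U$. In the latter two regimes the conjecture is not needed, but a different argument is: when $pT_0\leq V$ one checks $p\cdot\gamma(T_0/V,T_0/V)=0$, so $\im(\lambda)\subseteq (\tfrac{1}{p}U)/U$ and the quotient $(T_0/p^iT_0,+_\lambda)$ is honestly \emph{untwisted}; when $p^3V\leq U$ the kernel $V/U$ has order at most $p^{3d_x}$ and the quotient $T_0/V\rtimes P$ is untwisted by Lemma \ref{propertiesAlambda}\eqref{propertiesAlambdaisoonquotient}. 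In both cases Theorem \ref{thm:cohringsfromquotient} then absorbs the bounded kernel. Your single-case treatment simply does not cover these configurations.

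A second, related error: even in the good range, you apply Proposition \ref{prop:isoAAlambdartimesPrealized} directly to $(T_0/U,+_\lambda)\rtimes P$. That proposition rests on Conjecture \ref{conj:detailed}, whose hypotheses include the $\Omega$EP, and Lemma \ref{propertiesT0U+*} establishes the $\Omega$EP only for the quotient $(T_0/W,+_{\lambda'})$ (with $W=p^iT_0$ in the paper's Case 1), \emph{not} for $(T_0/U,+_\lambda)$ itself. So the conjecture-based comparison of bigraded algebras must be made for $(T_0/p^iT_0,+_{\lambda'})\rtimes P$, after which one climbs back along the short exact sequence with kernel $(p^iT_0/U,+_\lambda)$ of order at most $p^{2d_x}$ using Theorem \ref{thm:cohringsfromquotient} --- this is a quotient-with-bounded-kernel passage, not a bounded-index subgroup passage, so Theorem \ref{Car1} is not the right tool for it. Your phrase about ``absorbing the bounded-index discrepancy'' gestures at this but conflates the two mechanisms and applies the key proposition to a group for which its hypotheses are unverified. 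Repairing the proof requires restoring the paper's three-case division and relocating the use of Proposition \ref{prop:isoAAlambdartimesPrealized} to the $\Omega$EP quotient.
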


\begin{proof}
Let $G$ be a finite $p$-group of coclass $c$. By Theorem \ref{thm:quotientofcoclasscareconstructible}, there exists an integer $f(p,c)$ and a normal subgroup $N$ of $G$ with $|N|\leq f(p,c)$ such that $G/N$ is constructible. A constructible group $G_\gamma$ arises from a quadruple $R$, $U$, $V$ and $\gamma$ (see Definition \ref{Defi:constructiblegroup}). Here, $R$ is a uniserial $p$-adic space group of dimension $d_x=p^{x-1}(p-1)$ and coclass at most $c$ ($x\leq c$), and there are finitely many such groups. We denote by $T$ the translation group of $R$ and by $P$ its point group. We also define $T_0$ as the minimal $P$-lattice  for which the extension $T_0\to R_0\to P$ splits (see Subsection \ref{subsection:uniserialpadicspacegroupsandconstructible}). So we have $G/N\cong G_\gamma$ and, by Lemma \ref{lemma:GalphaGalpha0}\eqref{lemma:GalphaGalpha0subgroupindex}, $G_\gamma$ is a subgroup of $G_{\gamma,0}$ (see Definition \ref{Defi:splitconstructiblegroup}). Then, by Lemma \ref{lemma:GalphaGalpha0}\eqref{lemma:GalphaGalpha0split}, we have $G_{\gamma,0}\cong (T_0/U,+_{\lambda})\rtimes P$.

We shall show that
\begin{equation}\label{claim:mainthm}
\text{there are finitely many ring structures for $H^*(G_{\gamma,0};\F_p)$}
\end{equation}
considering three different cases. Before doing so, notice that 
\begin{equation}\label{equ:mainproofrankisbounded}
\rk((T_0/U,+_{\lambda})\rtimes P)\text{ is bounded}
\end{equation}
when $G_\gamma$ runs over the collection of constructible groups above. This is a consequence of the inequality 
\begin{align*}
\rk((T_0/U,+_{\lambda})\rtimes P)&\leq \rk(T_0/U,+_{\lambda})+\rk(P)\\
&\leq \rk(T_0/V, +_{\lambda})+\rk(V/U,+_{\lambda})+\rk(P)\\
&\leq \rk(T_0/V, +)+\rk(V/U,+)+\rk(P)\\
&\leq 2d_x+\rk(P).
\end{align*}
and the fact that there finitely many point groups $P$ (see also  Lemma \ref{propertiesAlambda}\eqref{propertiesAlambdasameOmega1} for the properties of $+_{\lambda}$).

\textbf{Case 1: $\frac{1}{p^3}U\leq V\leq pT_0$:} This implies that, in particular, hypothesis \eqref{equation:extraassumptionsUV} holds. Let $i\geq 2$ be the only integer satisfying $p^{i+2}T_0< U\leq p^{i+1}T_0$. We have the following short exact sequence
\begin{equation}\label{equ:finalproofhypothesisholdses}
(p^iT_0/U,+_{\lambda})\longrightarrow (T_0/U,+_{\lambda})\rtimes P\longrightarrow (T_0/p^iT_0,+_{\lambda'})\rtimes P,
\end{equation}
where, by Lemma \ref{propertiesT0U+*}\eqref{propertiesT0U+*TOWOmegaEP} with $W=p^iT_0$, $(T_0/p^iT_0,+_{\lambda'})$ is powerful $p$-central group with $\Omega$EP. By Proposition \ref{prop:isoAAlambdartimesPrealized}, the cohomology rings $H^*((T_0/p^iT_0,+_{\lambda'})\rtimes P;\F_p)$ and $H^*(T_0/p^iT_0\rtimes P;\F_p)$ have filtrations with isomorphic associated bigraded algebras. By Proposition \ref{prop:spacegroupssamebigradedalgebras}, this bigraded algebra does not depend on $i$. Hence, there are finitely many cohomology rings for $H^*((T_0/p^iT_0,+_{\lambda'})\rtimes P;\F_p)$ for the infinitely many $i$'s and $\gamma$'s.  In the short exact sequence \eqref{equ:finalproofhypothesisholdses}, we have:
\begin{enumerate}[({Case 1.}a)]
\item the kernel has size at most $p^{2d_x}$,
\item $\rk((T_0/U,+_{\lambda})\rtimes P)$ is bounded by Equation \eqref{equ:mainproofrankisbounded}, and
\item $(T_0/p^iT_0,+_{\lambda'})$ has nilpotency class $2$ by Lemma \ref{propertiesAlambda}\eqref{propertiesAlambdaclasstwo}.
\end{enumerate}
By Theorem \ref{thm:cohringsfromquotient}, there are finitely many choices for the ring $H^*((T_0/U,+_{\lambda})\rtimes P;\F_p)$.

\textbf{Case 2: $pT_0\leq V$:} If $U\geq pT_0$ then we are leaving out a finite number of cases. Otherwise $U\leq pT_0$ and we proceed as follows: Note first that
\[
p\cdot \gamma(T_0/V,T_0/V)=\gamma(pT_0/V,T_0/V)\leq \gamma(V/V,T_0/V)=0.
\]
Then, $\im(\lambda)\subseteq (\frac{1}{p}U)/U$. Let $i\geq 1$ denote the largest integer such that $\frac{1}{p}U\leq p^iT_0$. In the short exact sequence
\[
(p^iT_0/U,+_{\lambda})\longrightarrow (T_0/U,+_{\lambda})\rtimes P\longrightarrow T_0/p^iT_0\rtimes P,
\]
we have $(T_0/p^iT_0,+)=(T_0/p^iT_0,+_\lambda)$ as $\im(\lambda)\subseteq (\frac{1}{p}U)/U \subseteq p^iT_0/U$, and:
\begin{enumerate}[({Case 2.}a)]
\item the kernel has size at most $p^{2d_x}$,
\item $\rk((T_0/U,+_{\lambda})\rtimes P)$ is bounded by Equation \eqref{equ:mainproofrankisbounded}, and
\item $T_0/p^iT_0$ is an abelian subgroup of finite index of $T_0/p^iT_0\rtimes P$. 
\end{enumerate}
By Proposition \ref{prop:spacegroupssamebigradedalgebras}, there are finitely many ring structures for $H^*(T_0/p^iT_0\rtimes P;\F_p)$ and by Theorem \ref{thm:cohringsfromquotient}, there are finitely many choices for the ring $H^*((T_0/U,+_{\lambda})\rtimes P;\F_p)$.


\textbf{Case 3: $p^3V\leq U$:} As $U\leq V$ we have $|V:U|\leq |V:p^3V|\leq p^{3d_x}$. We also have a short exact sequence
\begin{equation}
\label{equ:finalproofcase3ses}
V/U \longrightarrow (T_0/U,+_{\lambda})\rtimes P\longrightarrow T_0/V\rtimes P,
\end{equation}
where the description of the rightmost group is a consequence of Lemma \ref{propertiesAlambda}\eqref{propertiesAlambdaisoonquotient}. By Proposition \ref{prop:spacegroupssamecohomology}, there are finitely many ring structures for $H^*(T_0/V\rtimes P;\F_p)$. Moreover, in the short exact sequence \eqref{equ:finalproofcase3ses}, we have:
\begin{enumerate}[({Case 3.}a)]
\item the kernel has size bounded by $p^{3d_x}$, 
\item $\rk((T_0/U,+_{\lambda})\rtimes P)$ is bounded by Equation \eqref{equ:mainproofrankisbounded}, and
\item $T_0/V$ has nilpotency class $1$ (abelian).
\end{enumerate}
By Theorem \ref{thm:cohringsfromquotient}, there are finitely many choices for the ring  $H^*((T_0/U,+_{\lambda})\rtimes P;\F_p)$.

So \eqref{claim:mainthm} is proven. Now, by Lemma \ref{lemma:GalphaGalpha0}\eqref{lemma:GalphaGalpha0subgroupindex}, the index of $G_\gamma$ in $G_{\gamma,0}$ is exactly $|T_0:T|$, and this is again bounded when $R$ runs over the finitely many uniserial $p$-adic space groups of coclass $\leq c$. So, by Theorem \ref{Car1}, there are finitely many ring structures for $H^*(G_{\gamma};\F_p)$. Thus, $G$ fits in an extension of groups,
\[
1 \to N \to G \to G_{\gamma} \to 1,
\]
where:
\begin{enumerate}[(a)]
\item the kernel has size $|N|\leq f(p,c)$,
\item $\rk(G)\leq \rk(N)+\rk(G_\gamma)\leq |N|+\rk(G_{\gamma,0})\leq f(p,c)+\rk((T_0/U,+_{\lambda})\rtimes P)$ is bounded by Equation \eqref{equ:mainproofrankisbounded}, where $G_\gamma\leq G_{\gamma,0}\cong (T_0/U,+_{\lambda})\rtimes P$, and
\item $G_\gamma\cap (T_0/U,+_{\lambda})$ is a nilpotency class $2$ (normal) subgroup of $G_\gamma$ by Lemma \ref{propertiesAlambda}\eqref{propertiesAlambdaclasstwo}, and it has index bounded by $|P|$.
\end{enumerate}
Then, by Theorem \ref{thm:cohringsfromquotient}, there are finitely many ring structures for $H^*(G;\F_p)$. We are done.
\end{proof}



\begin{thebibliography}{99}

\bibitem{KSBrown82} K.S.~Brown, {\it Cohomology of groups}, Graduate Texts in Mathematics, 87. Springer-Verlag, New York-Berlin, 1982.

\bibitem{Carlson} J.F.~Carlson, {\it Coclass and cohomology}, Journal of Pure and Applied Algebra 200 (2005) 251-266.



\bibitem{CicaloGraafLee} S.~Cical\`o, W.A.~de Graaf, M.~Vaughan-Lee, {\it An effective version of the Lazard correspondence}, Journal of Algebra 352 (2012) 430-450.

\bibitem{Eick2005} B.~Eick, {\it Determination of the uniserial space groups with a given coclass}, Journal of the London Mathematical Society, Volume 71, Issue 03 (2005), pp 622--642.

\bibitem{BettinaDavid2015} B.~Eick, D.J.~Green, {\it The Quillen categories of p-groups and coclass theory}, Israel J. Math. 206 (2015), no. 1, 183--212. 

\bibitem{Ellis2016} G.~Ellis, {\it Cohomological periodicities of crystallographic groups.},
J. Algebra 445 (2016), 537--544. 

\bibitem{LEvens91} L.~Evens, {\it The cohomology of groups}, 
Oxford Mathematical Monographs. Oxford Science Publications. The Clarendon Press, Oxford University Press, New York, 1991.

\bibitem{Gustavo2000} G.~Fern\'andez-Alcober, {\it An introduction to finite $p$-groups: regular $p$-groups and groups of maximal class}, Lecture notes, Brasilia (2000).

\bibitem{GustavoJonAndrei} G.~Fern\'andez-Alcober, J.~Gonz\'alez-S\'anchez, A.~Jaikin-Zapirain, {\it Omega subgroups of pro-$p$ groups}, Israel Journal of Mathematics, 166, pp. 393-412 (2008).

\bibitem{HillarRhea2007} C.J.~Hillar, D.L.~Rhea, {\it Automorphisms of finite abelian groups}, Amer. Math. Monthly 114 (2007), no. 10, 917-923. 

\bibitem{khukhro} E.I.~Khukhro, {\it p-automorphisms of finite p-groups}, London Mathematical Society Lecture Note Series, 246, Cambridge University Press, Cambridge, 1998.


\bibitem{LeedGreen} C.R.~Leedham-Green, {\it The Structure of Finite $p$-groups}, J. London Math. Soc. (2) 50 (1994) 49-67.

\bibitem{LeedGreen1} C.R.~Leedham-Green, {\it Some remarks on Sylow $p$-Subgroups}, Mathematische Zeitschrift, 191, 529-535 (1986) 

\bibitem{LeedGreenBook} C.R.~Leedham-Green and S.~McKay, {\it The Structure of Groups of Prime Power Order}, London Mathematical Society Monographs. New Series, 27. Oxford Science Publications. Oxford University Press, Oxford, 2002.

\bibitem{LeedGreenPleskenV} C.R.~Leedham-Green, S.~McKay, W.~Plesken, {\it Space groups and groups of prime-power order, VI. A bound to the dimension of 2-adic space groups with fixed coclass}, J. London Math. Soc. 34 (1986) 417--425.

\bibitem{SMacLane63} S.~Mac Lane, {\it Homology}, Springer-Verlag 1963.

\bibitem{May67} J.P.~May, {\it Simplicial Objects in Algebraic Topology}, Van Nostrand, Princeton. NJ, 1967.

\bibitem{McKay94} S.~McKay, {\it The precise bound to the coclass of space groups}, 
J. London Math. Soc. (2) 50 (1994), no. 3, 488-500.

\bibitem{Taelman2014} L. Taelman, {\it Characteristic classes for curves of genus one},
Michigan Math. J. 64 (2015), no. 3, 633-654. 

\bibitem{Weigel} T.S.~Weigel, {\it p-Central groups and Poincar\'e duality}, Trans. Amer. Math. Soc. 352 (2000), no. 9, 4143--4154. 
\bibitem{CAWeibel94} C.A.~Weibel, {\it An introduction to homological algebra}, Cambridge University Press, 1994.




\end{thebibliography}
\end{document}